\documentclass[11pt]{amsart}
\usepackage[T1]{fontenc}
\usepackage{lmodern}
\usepackage{amsmath,amssymb,amsthm,mathrsfs,comment}
\usepackage{tikz}
\usetikzlibrary{matrix,arrows,decorations.pathmorphing,cd,positioning,calc}
\usepackage{enumitem}
\usepackage{hyperref}
\hypersetup{colorlinks=true,linkcolor=blue,citecolor=blue,urlcolor=blue,
 pdftitle={End-Semidualizing Modules},pdfauthor={Luigi Ferraro and Justin Lyle}}
\usepackage{cleveref}
\usepackage{microtype}
\numberwithin{equation}{section}
\newtheorem{theorem}{Theorem}[section]
\newtheorem{lem}[theorem]{Lemma}
\newtheorem{prop}[theorem]{Proposition}
\newtheorem{proposition}[theorem]{Proposition}
\newtheorem{cor}[theorem]{Corollary}
\newtheorem{corollary}[theorem]{Corollary}
\newtheorem*{introtheorem}{Theorem}
\theoremstyle{definition}
\newtheorem{defn}[theorem]{Definition}
\newtheorem{definition}[theorem]{Definition}
\theoremstyle{remark}
\newtheorem{remark}[theorem]{Remark}
\newtheorem{example}[theorem]{Example}
\newtheorem{chunk}[theorem]{}
\crefname{lem}{lemma}{lemmas}
\Crefname{lem}{Lemma}{Lemmas}
\crefname{prop}{proposition}{propositions}
\Crefname{prop}{Proposition}{Propositions}
\crefname{proposition}{proposition}{propositions}
\Crefname{proposition}{Proposition}{Propositions}
\crefname{cor}{corollary}{corollaries}
\Crefname{cor}{Corollary}{Corollaries}
\crefname{corollary}{corollary}{corollaries}
\Crefname{corollary}{Corollary}{Corollaries}
\crefname{defn}{definition}{definitions}
\Crefname{defn}{Definition}{Definitions}
\crefname{definition}{definition}{definitions}
\Crefname{definition}{Definition}{Definitions}
\DeclareMathOperator{\End}{End}
\DeclareMathOperator{\Hom}{Hom}
\DeclareMathOperator{\Ext}{Ext}
\DeclareMathOperator{\Tor}{Tor}
\DeclareMathOperator{\Spec}{Spec}
\DeclareMathOperator{\Max}{Max}
\DeclareMathOperator{\Supp}{Supp}

\DeclareMathOperator{\depth}{depth}
\DeclareMathOperator{\fd}{fd}

\DeclareMathOperator{\id}{id}

\DeclareMathOperator{\coker}{coker}

\newcommand{\A}{\mathcal A}
\newcommand{\B}{\mathcal B}
\newcommand{\m}{\mathfrak m}
\newcommand{\p}{\mathfrak p}

\begin{document}
\title[On End-Semidualizing Modules]{On End-Semidualizing Modules}
\author[Ferraro]{Luigi Ferraro}
\address[Ferraro]{School of Mathematical and Statistical Sciences \\ University of Texas Rio Grande Valley \\ Edinburg, TX 78539, U.S.A}
\email[Luigi Ferraro]{luigi.ferraro@utrgv.edu}
\urladdr{https://faculty.utrgv.edu/luigi.ferraro/}
\author[Lyle]{Justin Lyle}
\address[Lyle]{Department of Mathematics and Statistics \\ 305 W Samford Avenue \\
Auburn University, AL 36849}
\email[Justin Lyle]{jll0107@auburn.edu}
\urladdr{https://jlyle42.github.io/justinlyle/}
\subjclass[2020]{Primary 16E30; Secondary 13D07, 13C14, 16D90}
\keywords{End-semidualizing module, Auslander class, Bass class, Foxby equivalence, faithfully balanced module, depth}
\begin{abstract}
We introduce and study the notion of End-semidualizing modules which are generalizations of semidualizing modules in the classical sense, and also extend the notion of $S-R$ semidualizing bimodules studied by Holm-White and Wakamatsu. By constrast with these notions, End-semidualizing modules exist abundantly. We characterize these modules by the inclusion of projective and flat modules
in their Auslander class, or injective modules in their Bass class, and give criteria for when these Foxby classes satisfy the
two-out-of-three property. Over commutative Noetherian rings, we prove base-change and duality results,
and characterize local depth inequalities by the inclusion of all modules of
finite flat or injective dimension in the corresponding classes. 
\end{abstract}
\maketitle

\section{Introduction}\label{sec:intro}

Let $R$ be a commutative Noetherian local ring. A finitely generated $R$-module $N$ is said to be \emph{semidualizing} if $\Ext^{>0}_R(N,N)=0$ and the natural homothety map $R\to\End_R(N)$ is an isomorphism. Semidualizing modules fill the gaps between free modules of rank $1$ and dualizing modules. Indeed, if $R$ is Gorenstein then the only semidualizing $R$-module, up to isomorphism, is the free module of rank $1$. Conversely, if $R$ admits a dualizing module and $R$ is its only semidualizing module, then $R$ is Gorenstein; see \cite{LarsBook}.

An obvious drawback to this definition is that it does not directly extend to the noncommutative setting. Indeed, if $R$ is noncommutative, multiplication by an element of $R$ need not define an $R$-linear endomorphism of a left $R$-module. To remedy this, one works with $S-R$ semidualizing bimodules, as developed by Holm-White \cite{HW07}. In the conventions of this paper, a finitely generated left $R$-module $N$ carries a commuting left action of $E:=\End_R(N)$. Subject to degreewise finite projective resolutions on both sides, the corresponding $S-R$ semidualizing bimodule conditions are that
\[
 \Ext_R^{>0}(N,N)=\Ext_E^{>0}(N,N)=0,
\]
and that the natural homothety map $R \to \End_E(N)$ is an isomorphism. These conditions are closely related to Wakamatsu's earlier work on modules with trivial self-extensions \cite{W88}.

A semidualizing module $N$ in the classical sense is often studied via its so-called Foxby classes, namely its Auslander class $\A_N(R)$ and its Bass class $\B_N(R)$, which are categories of modules on which Hom and tensor product functors associated to $N$ have good behavior. See Definitions \ref{def:auslander} and \ref{def:bass} for the precise definitions we work with, which do not require $N$ to be semidualizing. Holm-White show that many of the familiar behaviors of semidualizing modules extend when adapted appropriately to semidualizing bimodules.

In this work we relax the notion of a semidualizing bimodule by removing the requirement that $\Ext_R^{>0}(N,N)=0$ to produce what we call \emph{End-semidualizing modules}. Dropping this requirement on the vanishing of Ext produces a substantial generalization. Indeed, any finitely generated generator (or cogenerator), and in particular any module of the form $N:=M\oplus R$, is End-semidualizing. In this example the extra self-orthogonality condition is precisely
\[
 \Ext_R^{>0}(M,M\oplus R)=0.
\]
Thus the additional requirement concerns the Auslander-Reiten condition, whereas the End-semidualizing condition holds for every $M$, without such a vanishing hypothesis. We also give an example of an $\End$-semidualizing module that is not a generator, cogenerator, nor semidualizing in the classical sense (see \Cref{neithergeneratornorcogenerator}).

As additional motivation for the study of the End-semidualizing condition, our first main result demonstrates its naturality. Throughout the noncommutative results, $R$ is Noetherian on both sides, $N$ is finitely generated over $R$, and $N$ admits a resolution by finitely generated projective left $E$-modules. The last hypothesis is automatic when $R$ is module-finite over its center, and also when $N$ is a generator. 

\begin{introtheorem}
The following are equivalent:
\begin{enumerate}[label=$(\arabic*)$]
\item $N$ is End-semidualizing.
\item Every projective right $R$-module belongs to $\A_N(R)$.
\item Every flat right $R$-module belongs to $\A_N(R)$.
\item Every injective left $R$-module belongs to $\B_N(R)$.
\end{enumerate}
\end{introtheorem}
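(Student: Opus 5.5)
The plan is to verify the definitions directly on the test modules $R$, $R^{(\Lambda)}$, direct limits of free modules, and injectives, using the standing hypothesis that $N$ has a resolution $P_\bullet\to N$ by finitely generated projective left $E$-modules. I take $N$ End-semidualizing to mean $\Ext_E^{>0}(N,N)=0$ and that the homothety $\theta\colon R\to\End_E(N)$ is an isomorphism. I read the Foxby classes (Definitions \ref{def:auslander} and \ref{def:bass}) as follows:
\begin{itemize}
\item[$\bullet$] A right module $M$ lies in $\A_N(R)$ when $\Tor^R_{>0}(M,N)=0$, $\Ext_E^{>0}(N,M\otimes_RN)=0$, and the natural map $M\to\Hom_E(N,M\otimes_RN)$ is an isomorphism.
\item[$\bullet$] A left module $I$ lies in $\B_N(R)$ when $\Ext_R^{>0}(N,I)=0$, $\Tor^E_{>0}(\Hom_R(N,I),N)=0$, and evaluation $\Hom_R(N,I)\otimes_EN\to I$ is an isomorphism.
\end{itemize}

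\textbf{(1)$\Leftrightarrow$(2).} Take $M=R_R$. Then $\Tor^R_{>0}(R,N)=0$ and $R\otimes_RN=N$, and the natural map $R\to\Hom_E(N,N)$ is $\theta$. So $R_R\in\A_N(R)$ is literally the statement that $N$ is End-semidualizing, which gives (2)$\Rightarrow$(1).

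For (1)$\Rightarrow$(2), first pass to free modules $R^{(\Lambda)}$. Tensor products and $\Tor^R$ commute with direct sums. Since $P_\bullet$ consists of finitely generated projectives, $\Hom_E(P_\bullet,-)$ commutes with direct sums, hence so do $\Ext^i_E(N,-)$ and $\Hom_E(N,-)$. The natural map for $R^{(\Lambda)}$ is therefore $\theta^{(\Lambda)}$, and the Ext vanishing is preserved. Then pass to projectives: membership in $\A_N(R)$ is inherited by direct summands, since all the conditions are additive.

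\textbf{(2)$\Leftrightarrow$(3).} (3)$\Rightarrow$(2) is trivial. For (2)$\Rightarrow$(3), write a flat module as $F=\varinjlim F_\lambda$ with $F_\lambda$ finitely generated free, by Lazard. Then $\Tor^R_i(-,N)$ and $-\otimes_RN$ commute with filtered colimits. The same degreewise finiteness of $P_\bullet$ shows that $\Ext_E^i(N,-)$ and $\Hom_E(N,-)$ commute with filtered colimits. So the vanishing conditions and the isomorphism $F\to\Hom_E(N,F\otimes_RN)$ pass to the limit, by naturality of the comparison maps.

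\textbf{(1)$\Leftrightarrow$(4).} Let $I$ be an injective left $R$-module. Then $\Ext_R^{>0}(N,I)=0$ automatically. The key tool is Hom-evaluation: for a finitely generated projective left $E$-module $P$ there is a natural isomorphism
\[
\Hom_R(N,I)\otimes_EP\cong \Hom_R(\Hom_E(P,N),I).
\]
This holds for $P=E$ and extends by additivity. Applying it to $P_\bullet$ and using exactness of $\Hom_R(-,I)$ gives
\[
\Tor^E_i(\Hom_R(N,I),N)\cong\Hom_R(\Ext^i_E(N,N),I),
\]
and identifies the evaluation map $\Hom_R(N,I)\otimes_EN\to I$ with $\Hom_R(\theta,I)$. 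Hence (1)$\Rightarrow$(4) is immediate. For (4)$\Rightarrow$(1), choose $I$ to be an injective cogenerator of left $R$-modules. Then $\Hom_R(-,I)$ is faithful and exact, so $\Hom_R(\Ext^i_E(N,N),I)=0$ forces $\Ext^i_E(N,N)=0$, and $\Hom_R(\theta,I)$ being an isomorphism forces $\theta$ to be an isomorphism.

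The main obstacle is bookkeeping in this last step. The sides must be tracked carefully: $\Ext_E^i(N,N)$ must carry the left $R$-module structure coming from the right $R$-action on $N$ used in $\Hom_R(N,I)$. One must also check that the Hom-evaluation isomorphism is compatible with the evaluation map, so that it really becomes $\Hom_R(\theta,I)$. I would verify this first on $P=E$, where both sides reduce to $\Hom_R(N,I)$, and then invoke naturality in $P$.
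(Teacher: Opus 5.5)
Your proposal is correct and follows the paper's proof (Propositions~\ref{projectiveex} and~\ref{injectiveex}) essentially step by step. The shared steps are: testing on $R_R$; passing from free modules to direct summands; Lazard's theorem together with commutation of $\Hom_E(N,-)$ and $\Ext_E^i(N,-)$ with filtered colimits; and the evaluation isomorphism $\Tor_i^E(\Hom_R(N,I),N)\cong\Hom_R(\Ext_E^i(N,N),I)$ combined with an injective cogenerator. One small slip in your closing remark, which does not affect the argument: the left $R$-module structure on $\Ext_E^i(N,N)$ comes from the left $R$-action on the second argument $N$, which commutes with the $E$-action; there is no right $R$-action on $N$ in this setting.
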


This is proved in Propositions \ref{projectiveex} and \ref{injectiveex}. 

A powerful property of semidualizing modules in the classical sense is that their Foxby classes satisfy the so-called two-out-of-three property. That is, if $0\to A\to B\to C\to0$ is a short exact sequence for which two of the terms lie in the relevant Auslander class (resp.\ Bass class), then so does the third. We show that while the Auslander and Bass classes of any module always satisfy appropriate weakenings of this condition, the full two-out-of-three property need not hold for an End-semidualizing module. The relevant additional condition is that $-\otimes_E N$ \emph{detect zero objects}, i.e., that $X\otimes_E N=0$ implies $X=0$.

\begin{introtheorem}
If $-\otimes_E N$ detects zero objects, then both $\A_N(R)$ and $\B_N(R)$ satisfy the two-out-of-three property. Consequently, if $N$ is End-semidualizing, these classes contain, respectively, all right $R$-modules of finite flat dimension and all left $R$-modules of finite injective dimension.
\end{introtheorem}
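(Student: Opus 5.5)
We need to guess the definitions. Here $N$ is a left $R$-module and $E=\End_R(N)$ acts on the left, so $N$ is an $E$-$R$ bimodule in a suitable sense. Right $R$-modules $X$ give $X\otimes_R N$, a right $E$-module, and we take $\A_N(R)$ to consist of those $X$ with $\Tor^R_{>0}(X,N)=0$, $\Ext^{>0}_E(N,X\otimes_R N)=0$, and the natural map $\gamma_X\colon X\to\Hom_E(N,X\otimes_R N)$ an isomorphism. Dually, $\B_N(R)$ consists of left $R$-modules $Y$ with $\Ext^{>0}_R(N,Y)=0$, $\Tor^E_{>0}(\Hom_R(N,Y),N)=0$, and the evaluation map $\xi_Y\colon \Hom_R(N,Y)\otimes_E N\to Y$ an isomorphism. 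Under these definitions, "two-out-of-three" follows from the long exact sequences, except for the isomorphism conditions, where we use that $-\otimes_E N$ detects zero.

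\textbf{Auslander class.} Let $0\to A\to B\to C\to 0$ be exact with two terms in $\A_N(R)$. The $\Tor^R(-,N)$ long exact sequence gives the Tor vanishing of the third term, except in the case $A,B\in\A$, where $\Tor_1^R(C,N)$ need not vanish a priori. Once Tor vanishes, $0\to A\otimes N\to B\otimes N\to C\otimes N\to0$ is exact. Applying $\Hom_E(N,-)$ gives a long exact sequence, which we compare with the original sequence via the $\gamma$'s, and the five lemma handles the standard cases.
\begin{itemize}
\item If $B,C\in\A$: Ext vanishing for $A$ follows in degrees $\geq2$ and in degree $1$ from surjectivity of $\Hom_E(N,B\otimes N)\to\Hom_E(N,C\otimes N)$, which holds because $\gamma_B,\gamma_C$ are isomorphisms. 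Then $\gamma_A$ is an isomorphism by the five lemma.
\item If $A,C\in\A$: similarly, $\gamma_B$ is an isomorphism by the five lemma, and Ext vanishing is clear.
\item If $A,B\in\A$: this is the hard case. We get $0\to\Tor_1^R(C,N)\to A\otimes N\to B\otimes N\to C\otimes N\to0$. Set $K=\Tor_1^R(C,N)$ and $I$ the image of $A\otimes N$ in $B\otimes N$. Applying $\Hom_E(N,-)$ and using that $\gamma_A,\gamma_B$ are injective with $A\to B$ injective, one shows $\Hom_E(N,K)=0$.
\end{itemize}

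\textbf{Main obstacle.} The difficulty is to upgrade $\Hom_E(N,K)=0$ to $K=0$; the hypothesis that $-\otimes_E N$ detects zero should be used here. I expect to use the adjunction/duality between $\Hom_E(N,-)$ and $-\otimes_E N$ through $R\cong\End_E(N)$, together with the hypothesis that $N$ has a finitely generated projective resolution over $E$. Likely the right move is to rephrase the vanishing of $K$, or of a cokernel of some $\gamma$, as the vanishing of $Z\otimes_E N$ for a suitable $Z$ built from $\Hom_E(N,-)$ applied to the short exact sequence. Alternatively, one can work in the derived category. There $\A_N(R)$ consists of those $X$ with $X\to \mathbf{R}\Hom_E(N,X\otimes^{\mathbf L}_R N)$ an isomorphism and $X\otimes^{\mathbf L}N$ concentrated in degree $0$. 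Triangles then make two-out-of-three automatic for the unit being an isomorphism, and the detection hypothesis is used to force the homology of the third term to sit in degree $0$. I would try the derived formulation first.

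\textbf{Bass class and consequences.} The Bass class is handled dually, with Ext and Tor interchanged, and the delicate case becomes the one where the cokernel is unknown. Given this, the consequence follows for $N$ End-semidualizing. By the first theorem (Propositions \ref{projectiveex} and \ref{injectiveex}), flat right modules lie in $\A_N(R)$ and injective left modules lie in $\B_N(R)$. For $\fd X=n<\infty$, take a flat resolution $0\to F_n\to\cdots\to F_0\to X\to0$ and induct on $n$, applying two-out-of-three to the short exact syzygy sequences; here the needed case is kernel and middle in $\A$ implies cokernel in $\A$, which is exactly the delicate case. The injective dimension case is dual, using injective coresolutions, and there it is the kernel case that is needed.
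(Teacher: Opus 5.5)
There is a genuine gap: the one step that actually uses the hypothesis is left open. The cases you do carry out are correct. They are Propositions~\ref{auslanderresolving} and~\ref{bassclass-ses}, which hold with no hypothesis on $N$. Your reduction of the case $A,B\in\A_N(R)$ to $\Hom_E(N,K)=0$, with $K=\Tor_1^R(C,N)$, is also correct. Neither route you suggest for concluding $K=0$ works.

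The derived reformulation only relocates the problem. The triangle argument gives $C\simeq\mathbf{R}\Hom_E(N,C\otimes^{\mathbf L}_RN)$. Since $C\otimes^{\mathbf L}_RN$ has homology only in degrees $0$ and $1$, the degree-$1$ homology of the right side is $\Hom_E(N,K)$. So you again reach $\Hom_E(N,K)=0$ and still need $K=0$.

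The missing idea is Lemma~\ref{bothfunctorsfaithful}: zero-detection by $-\otimes_EN$ is equivalent to zero-detection by $\Hom_E(N,-)$. This goes through character modules $(-)^+=\Hom_{\mathbb Z}(-,\mathbb Q/\mathbb Z)$.
\begin{itemize}
\item Because ${}_EN$ is finitely presented (this is where the standing finiteness hypothesis enters), evaluation gives a natural isomorphism $K^+\otimes_EN\cong\Hom_E(N,K)^+$.
\item Note that $X\otimes_RN$ is a \emph{left} $E$-module, not a right one. So $K^+$ is a right $E$-module and $K^+\otimes_EN$ makes sense. The module $Z$ you were looking for is $K^+$, not something built from $\Hom_E(N,-)$ of the sequence.
\item Thus $\Hom_E(N,K)=0$ gives $K^+\otimes_EN=0$. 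Hence $K^+=0$ by hypothesis, and $K=0$ because $\mathbb Q/\mathbb Z$ is an injective cogenerator.
\end{itemize}
Your five-lemma argument then finishes. The sequence $0\to A\otimes_RN\to B\otimes_RN\to C\otimes_RN\to0$ is exact. The $\Ext_E(N,-)$ sequence gives $\Ext_E^{>0}(N,C\otimes_RN)=0$ and exactness after applying $\Hom_E(N,-)$. So $\gamma_C$ is an isomorphism.

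The Bass paragraph is likewise not carried out, and it misplaces the hard case. The hard case is $B,C\in\B_N(R)\Rightarrow A\in\B_N(R)$, where the kernel is unknown. This is exactly the case your injective-coresolution induction needs. Here the obstruction is $\Ext^1_R(N,A)$, and the hypothesis applies directly, with no duality:
\begin{itemize}
\item Since $\Ext^1_R(N,B)=0$, $\Ext^1_R(N,A)$ is the cokernel of $\Hom_R(N,B)\to\Hom_R(N,C)$.
\item By right exactness, $\Ext^1_R(N,A)\otimes_EN$ is the cokernel of $\Hom_R(N,B)\otimes_EN\to\Hom_R(N,C)\otimes_EN$.
\item This map is surjective, because $\xi_B$ and $\xi_C$ identify it with $B\to C$. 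Hence $\Ext^1_R(N,A)=0$.
\end{itemize}
The rest follows from the long exact sequences and the five lemma. Alternatively, Proposition~\ref{character-foxby-duality} transfers the Auslander statement to the Bass class.

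Once both hard cases are in place, your derivation of the consequences is correct and matches the paper. You induct along syzygy sequences, starting from Propositions~\ref{projectiveex} and~\ref{injectiveex}.
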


The first assertion does not require the End-semidualizing condition. Detection of zero objects is automatic when $R$ is an Artin algebra and $N$ is indecomposable. We give an explicit example over a triangular matrix algebra showing that the two-out-of-three property can fail when $N$ is decomposable. Independently of the End-semidualizing condition, character duality shows that $\A_N(R)$ has the two-out-of-three property if and only if $\B_N(R)$ does; see Proposition~\ref{character-foxby-duality}.

For the commutative case in particular, we provide several sufficient criteria under which the two-out-of-three property holds (\Cref{2outof3commutative}), and we show that the two-out-of-three property is closely tied to the depth of an End-semidualizing module $N$. We call $N$ \emph{locally deep} if $\depth_{R_\p}N_\p\geq\depth R_\p$ for every prime ideal $\p$. The following result is Theorem \ref{pdmcm}.

\begin{introtheorem}
Let $R$ be a commutative Noetherian ring and let $N$ be End-semidualizing. The following are equivalent:
\begin{enumerate}[label=$(\arabic*)$]
\item $N$ is locally deep.
\item Every $R$-module of finite flat dimension belongs to $\A_N(R)$.
\item Every $R$-module of finite injective dimension belongs to $\B_N(R)$.
\end{enumerate}
\end{introtheorem}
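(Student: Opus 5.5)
Throughout, $R$ is commutative Noetherian and $N$ is a finitely generated End-semidualizing module with $E=\End_R(N)$, so $R\cong\End_E(N)$. By the first introductory theorem, flat modules lie in $\A_N(R)$ and injectives lie in $\B_N(R)$. The plan has three parts: prove $(2)\Leftrightarrow(3)$ by duality, reduce each of $(2)$ and $(3)$ to a local statement, and identify that local statement with the depth inequality.

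\textbf{Step 1: $(2)\Leftrightarrow(3)$.} I will use character duality, as in Proposition~\ref{character-foxby-duality}. Over a commutative Noetherian ring, $M$ has finite flat dimension if and only if $M^\vee=\Hom_{\mathbb Z}(M,\mathbb Q/\mathbb Z)$ has finite injective dimension. The argument there gives $M\in\A_N(R)\iff M^\vee\in\B_N(R)$. This yields $(3)\Rightarrow(2)$ at once.

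For $(2)\Rightarrow(3)$, let $I$ have finite injective dimension. Then $I^\vee$ has finite flat dimension, so $(2)$ gives $I^\vee\in\A_N(R)$ and hence $I^{\vee\vee}\in\B_N(R)$. I then need to descend from $I^{\vee\vee}$ to $I$, using that $I\to I^{\vee\vee}$ is pure. As a fallback, I would instead run the localization argument of Step 3 separately for $(3)$, replacing flat resolutions by injective resolutions and Tor by Ext.

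\textbf{Step 2: reduction to local conditions.} Let $F_\bullet\to M$ be a finite flat resolution of length $n$. Since each $F_i\in\A_N(R)$, dimension shifting shows that $\Tor^E_{>0}(\Hom_R(N,F_\bullet),N)$ and the Ext conditions are controlled by the behaviour of $\Hom_R(N,-)$ on the complex. Concretely, $M\in\A_N(R)$ should reduce to $\Ext^{i}_R(N,F)$ interacting correctly with resolutions. I expect $M\in\A_N(R)$ to amount to $\Ext_R^{>0}(N,M)=0$ plus the Tor and evaluation conditions, which follow from the flats by a ladder argument.

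The key test module is $M=R/(x_1,\dots,x_d)$ for an $R$-regular sequence $\underline x$, together with its localizations. Using Koszul resolutions, the vanishing of $\Ext^{>0}_R(N,\,R/\underline x)$ translates, via the long exact sequences
\[
0\to R/(x_1..x_{j-1})\xrightarrow{x_j} R/(x_1..x_{j-1})\to R/(x_1..x_j)\to0,
\]
into $\underline x$ being $N$-regular. Note that $\Hom_R(N,-)$ of an $R$-regular sequence being fine corresponds to $\Ext^1$ vanishing.

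\textbf{Step 3: depth equivalence.} For $(1)\Rightarrow(2)$, assume $N$ is locally deep. Membership in $\A_N(R)$ localizes, since all functors commute with flat base change; this uses the base-change results and the fact that $E_\p=\End_{R_\p}(N_\p)$. So I may assume $R$ is local and argue by induction on $\fd M\le\depth R$, using Auslander--Buchsbaum for flat dimension. The depth condition $\depth N_\p\ge\depth R_\p$ gives $\Ext^i_R(N,M)=0$ for $i>0$ when $\fd M<\infty$. This step follows the standard argument that $\Ext^i(N,-)$ vanishes on finite-flat-dimension modules when $N$ is "maximal Cohen–Macaulay relative to $R$", via Ischebeck-type formulas.

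For $(2)\Rightarrow(1)$, suppose $\depth N_\p<\depth R_\p=t$ for some prime $\p$, and localize at it. Choose a maximal $R_\p$-regular sequence $\underline x$. Then $M=R_\p/\underline x$ has finite projective dimension, but some element of $\underline x$ is a zero-divisor on $N_\p$ modulo the preceding ones. This produces a nonzero $\Ext^{>0}_R(N,M)$ or a failure of the evaluation map, contradicting $M\in\A_N(R)$.

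\textbf{Main obstacle.} The hard step is showing that the Ext/Tor vanishing in $(1)\Rightarrow(2)$ really follows from local depth alone. My first attempt will be induction on $\fd$, using the two-out-of-three-type weakenings that hold for any module, to pass from flats to $M$.
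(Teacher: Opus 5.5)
Your outline has the same shape as the paper's proof: duality for $(2)\Leftrightarrow(3)$, the test module $R_\p/\underline{x}R_\p$ for $(2)\Rightarrow(1)$, and induction on flat dimension for $(1)\Rightarrow(2)$. However, you attach the wrong homological condition to the Auslander class. Membership $M\in\A_N(R)$ concerns $\Tor^R_{>0}(M,N)$, $\Ext^{>0}_E(N,M\otimes_RN)$ and $\epsilon_M$. The conditions $\Ext^{>0}_R(N,M)=0$ and $\Tor^E_{>0}(\Hom_R(N,M),N)=0$ in your Steps 2 and 3 are the Bass class conditions.

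The claim driving your $(1)\Rightarrow(2)$ is that local deepness gives $\Ext^{>0}_R(N,M)=0$ whenever $\fd_RM<\infty$. This is false already for $M=R$. Take $R=k[x,y]/(x,y)^2$: every finitely generated module is locally deep, and $N=R\oplus k$ is End-semidualizing by Example~\ref{generatorex}. Then $R\in\A_N(R)$ by Proposition~\ref{projectiveex}, yet $\Ext^1_R(N,R)\cong\Ext^1_R(k,R)\cong k^3\neq0$.

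Taking $\underline{x}$ empty, the same example shows that $N$-regularity of $\underline{x}$ does not yield $\Ext^{>0}_R(N,R/\underline{x})=0$. So the Koszul translation in your Step 2 is misplaced: it works for $\Tor_i^R(R/\underline{x},N)=H_i(\underline{x};N)$, not for this Ext. Ischebeck-type vanishing of $\Ext^{>0}_R(N,-)$ concerns modules of finite \emph{injective} dimension, so it belongs to $(1)\Rightarrow(3)$.

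Similarly, in $(2)\Rightarrow(1)$ a nonzero $\Ext^{>0}_R(N,M)$ contradicts nothing about $M\in\A_N(R)$. The contradiction comes from $\Tor_i^R(R_\p/\underline{x}R_\p,N)\cong H_i(\underline{x};N_\p)$. Its vanishing for $i>0$ forces $\underline{x}$ to be $N_\p$-regular when $N_\p\neq0$, which is the paper's argument.

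The missing idea is that local deepness gives $\Tor^R_{>0}(M,N)=0$ for every $M$ with $\fd_RM<\infty$. The paper proves this locally from the dependency formula $\sup\{i:\Tor_i^R(M,N)\neq0\}=\sup_\p\{\depth R_\p-\depth_{R_\p}M_\p-\depth_{R_\p}N_\p\}$, whose right side is at most $0$. You could instead reach it through your own duality, since $\Tor_i^R(M,N)^\vee\cong\Ext^i_R(N,M^\vee)$ with $\id_RM^\vee<\infty$. That route requires proving the finite-injective-dimension Ext vanishing for modules that need not be finitely generated.

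Without this Tor vanishing, your plan of inducting via ``two-out-of-three-type weakenings'' cannot work. For $0\to L\to F\to M\to0$ with $L,F\in\A_N(R)$, Proposition~\ref{auslanderresolving} says nothing about the cokernel. Passing to the cokernel is exactly what fails in Example~\ref{notwooutofthree}, where $\Tor_1^R(R/xR,k\oplus R)\neq0$. Once $\Tor_1^R(M,N)=0$, the sequence $0\to L\otimes_RN\to F\otimes_RN\to M\otimes_RN\to0$ is exact. The long exact sequence for $\Ext_E(N,-)$ then gives $\Ext^{>0}_E(N,M\otimes_RN)=0$ and exactness after $\Hom_E(N,-)$. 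The five lemma makes $\epsilon_M$ an isomorphism.

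Finally, your Step 1 is fine for $(3)\Rightarrow(2)$. The purity descent in $(2)\Rightarrow(3)$, however, is unjustified and unnecessary. The second equivalence of Proposition~\ref{character-foxby-duality}, $L\in\B_N(R)\iff L^+\in\A_N(R)$, gives $I\in\B_N(R)$ directly from $I^\vee\in\A_N(R)$. The paper instead dualizes into an injective cogenerator and uses the converses in Proposition~\ref{auslanderbassduality}.
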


We now outline the structure of this work. Section~\ref{sec:prelim} establishes the conventions, basic examples, and zero-detection lemma. Sections~\ref{sec:auslander} and~\ref{sec:bass} study the Auslander and Bass classes and their exact-sequence properties. Section~\ref{sec:foxby} proves Foxby equivalence and its recognition criteria. Section~\ref{sec:commutative} treats flat base change, localization, duality, and depth over commutative rings, and finishes by proving descent of the End-semidualizing condition along regular sequences under finite self-extension vanishing.

\section{Preliminaries}\label{sec:prelim}

Throughout, $R$ is a possibly noncommutative, two-sided Noetherian ring and
$N$ is a finitely generated left $R$-module. We set $E:=\End_R(N)$, with
multiplication given by composition. We assume that $N$, regarded as a left
$E$-module, admits a projective resolution consisting of finitely generated
projective modules. This finiteness hypothesis will be used when commuting
$\Hom$ and $\Ext$ with direct limits and when applying evaluation
isomorphisms. It holds automatically if $R$ is module-finite over a
commutative Noetherian central subring, and also if $N$ is a generator, as
shown below. Modules other than the fixed module $N$ need not be finitely
generated. Their sides are specified in each statement.

We note that $N$ has a natural left module structure over $E$, given by
$f\cdot x=f(x)$, and that the left $R$- and $E$-actions commute.
Consequently, if $M$ is a right $R$-module, then $M\otimes_RN$ is a left
$E$-module via $f(m\otimes n)=m\otimes f(n)$. If $X$ is a left
$E$-module, then $\Hom_E(N,X)$ is a right $R$-module via
$(h r)(n)=h(rn)$. Similarly, for a left $R$-module $Y$, the module
$\Hom_R(N,Y)$ is a right $E$-module via $h f=h\circ f$, and for a right
$E$-module $L$, the module $L\otimes_EN$ is a left $R$-module via
$r(\ell\otimes n)=\ell\otimes rn$. These conventions account for all
tensor products and endomorphism rings used below.

We say that $N$ is a \emph{generator} if $R$ is a direct summand of
$N^{\oplus t}$ for some positive integer $t$.

\begin{chunk}
There is a natural ring homomorphism
\[
\iota_N:R\longrightarrow\End_E(N),\qquad
\iota_N(r)(n)=rn.
\]
\begin{enumerate}
\item[$(1)$] We say $N$ is \emph{faithful} if $\iota_N$ is injective.
\item[$(2)$] We say $N$ is \emph{balanced} if $\iota_N$ is surjective.
\item[$(3)$] We say $N$ is \emph{faithfully balanced} if $\iota_N$ is an
isomorphism.
\end{enumerate}
\end{chunk}

The following definition gives the main object of study in this work.

\begin{definition}\label[definition]{def:endsemidualizing}
We say that $N$ is \emph{$\End$-semidualizing} if
\begin{enumerate}
\item[$(1)$] $\Ext_E^{i}(N,N)=0$ for every $i>0$;
\item[$(2)$] $N$ is faithfully balanced.
\end{enumerate}
We say that $N$ is \emph{faithfully $\End$-semidualizing} if, in addition,
$-\otimes_EN$ detects zero modules: for every right $E$-module $L$,
\[
L\otimes_EN=0\quad\Longrightarrow\quad L=0.
\]
\end{definition}

\begin{remark}
The additional condition in this definition concerns detection of zero
objects, as in the notion of a faithfully semidualizing bimodule in
\cite{HW07}. It does not mean that the tensor functor is faithful on
morphism sets. We will always state the zero-detection condition explicitly.
\end{remark}

We note the following useful dual formulation.

\begin{lem}\label[lem]{bothfunctorsfaithful}
The following conditions are equivalent:
\begin{enumerate}
\item[$(1)$] For every right $E$-module $L$, the equality
$L\otimes_EN=0$ implies $L=0$.
\item[$(2)$] For every left $E$-module $X$, the equality
$\Hom_E(N,X)=0$ implies $X=0$.
\end{enumerate}
\end{lem}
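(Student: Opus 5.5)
The plan is to reduce both implications to one another through Pontryagin (character) duality. For an abelian group $A$ put $A^+:=\Hom_{\mathbb Z}(A,\mathbb Q/\mathbb Z)$. Since $\mathbb Q/\mathbb Z$ is an injective cogenerator of abelian groups, $A=0$ if and only if $A^+=0$. If $L$ is a right $E$-module then $L^+$ is a left $E$-module, and if $X$ is a left $E$-module then $X^+$ is a right $E$-module. The whole argument then rests on two natural isomorphisms, one for each direction.

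For $(2)\Rightarrow(1)$, use the adjunction isomorphism
\[
(L\otimes_EN)^+\cong \Hom_E(N,L^+),
\]
which is Hom–tensor adjointness and holds for any $N$. Suppose $L\otimes_EN=0$. Then $\Hom_E(N,L^+)=0$, so $(2)$ gives $L^+=0$, and therefore $L=0$.

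For $(1)\Rightarrow(2)$, use the evaluation map
\[
\theta_N: X^+\otimes_E N\longrightarrow \Hom_E(N,X)^+,\qquad \varphi\otimes n\mapsto \bigl(h\mapsto \varphi(h(n))\bigr).
\]
We need $\theta_N$ to be an isomorphism, and the standing hypothesis provides this. By assumption $N$ has a resolution by finitely generated projective left $E$-modules, so in particular it is finitely presented, say $E^{m}\to E^{n}\to N\to 0$.

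Both source and target of $\theta$ are right exact additive covariant functors of the left $E$-module argument. For the source this is right exactness of the tensor product. For the target, $\Hom_E(-,X)$ is contravariant and left exact, and dualizing with the exact functor $(-)^+$ (exact because $\mathbb Q/\mathbb Z$ is injective) turns it into a covariant right exact functor. The map $\theta_E$ is the identity $X^+\cong \Hom_E(E,X)^+$ up to canonical identifications. Hence $\theta$ is an isomorphism on finite free modules, and by the five lemma applied to the presentation it is an isomorphism on $N$.

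Now let $\Hom_E(N,X)=0$. Then $X^+\otimes_EN\cong 0$, so $(1)$ gives $X^+=0$, and therefore $X=0$.

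The only step needing care is the evaluation isomorphism in the second direction. This is exactly where the finiteness hypothesis on $N$ as an $E$-module is used; the first direction needs no hypothesis at all. I would also check the side conventions, namely that the right $E$-module structure on $X^+$ makes $\theta$ well defined over $\otimes_E$. This is routine: $(\varphi f)(x)=\varphi(fx)$ gives $\varphi f\otimes n$ and $\varphi\otimes f(n)$ the same image $h\mapsto\varphi(f h(n))$.
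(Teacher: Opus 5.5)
Your proposal is correct and matches the paper's proof: both use character duality, with the adjunction $(L\otimes_EN)^+\cong\Hom_E(N,L^+)$ giving $(2)\Rightarrow(1)$, and the evaluation isomorphism $\Hom_E(N,X)^+\cong X^+\otimes_EN$ (valid because $N$ is finitely presented over $E$) giving $(1)\Rightarrow(2)$. Your five-lemma argument for the evaluation isomorphism, comparing two right exact functors on a finite presentation, fills in a step the paper only asserts.
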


\begin{proof}
Write $(-)^+=\Hom_{\mathbb Z}(-,\mathbb Q/\mathbb Z)$. The character
module functor is exact and detects zero modules. For every right
$E$-module $L$, tensor--Hom adjunction gives
\[
(L\otimes_EN)^+\cong\Hom_E(N,L^+).
\]
Thus $(2)$ implies $(1)$. Since $N$ is finitely presented over $E$, for
every left $E$-module $X$ there is also a natural isomorphism
\[
\Hom_E(N,X)^+\cong X^+\otimes_EN.\qedhere
\]

\end{proof}

Semidualizing modules in the classical sense are clearly
$\End$-semidualizing. The next example shows that $\End$-semidualizing
modules are plentiful.

\begin{example}\label[example]{generatorex}
If $N$ is a generator, then $N$ is $\End$-semidualizing. In particular,
$N:=M\oplus R$ is $\End$-semidualizing for every finitely generated left
$R$-module $M$.
\end{example}

\begin{proof}
Since $R$ is a summand of $N^{\oplus t}$, there are elements
$n_1,\ldots,n_t\in N$ and maps $v_1,\ldots,v_t\in\Hom_R(N,R)$
affording the identity
\[
\sum_{j=1}^t v_j(n_j)=1.
\]
For $x\in N$, define $a_j(x)\in E$ by
$a_j(x)(y)=v_j(y)x$. The maps $a_j:N\to E$ are left $E$-linear, and
the composite
\[
N\xrightarrow{(a_1,\ldots,a_t)} E^t
 \longrightarrow N \qquad \text{given by} \qquad
(f_1,\ldots,f_t)\longmapsto\sum_j f_j(n_j)
\]
is the identity. Thus $N$ is finitely generated projective over $E$, so
$\Ext_E^{>0}(N,N)=0$ and the standing finiteness hypothesis holds.

The generator condition also implies that $N$ is faithful over $R$.
For $s\in\End_E(N)$ and $x\in N$, $E$-linearity gives
\[
s(x)=s\Big(\sum_j a_j(x)(n_j)\Big)
 =\sum_j a_j(x)(s(n_j))
 =\Big(\sum_j v_j(s(n_j))\Big)x.
\]
Hence $s$ is multiplication by an element of $R$, and $\iota_N$ is
surjective as well as injective.
\end{proof}

\begin{example}\label[example]{cogeneratorex}
Let $R$ be a finite-dimensional algebra over a field $k$, and let
$D(-)=\Hom_k(-,k)$. We show that every finite-dimensional cogenerator $N$ is
$\End$-semidualizing. Here the cogenerator condition means that
$D(R_R)$ is a summand of a finite direct sum of copies of $N$.
Indeed, $G=D(N)$ is then a generator as a right $R$-module.
Put $F=\End_{R^{\mathrm{op}}}(G)$ and $E=\End_R(N)$.
The map $e\mapsto D(e)$ identifies $E^{\mathrm{op}}$ with $F$.
By the generator case, $G$ is projective as a left $F$-module
and its double-centralizer map
$R^{\mathrm{op}}\to\End_F(G)$ is an isomorphism. Consequently
$G$ is projective as a right $E$-module, so $N\cong D(G)$ is
injective as a left $E$-module. Moreover, duality identifies
\[
 \End_E(N)\cong\End_F(G)^{\mathrm{op}}\cong R,
\]
and the resulting map from $R$ is its natural action on $N$.
Thus $N$ is faithfully balanced and
$\Ext_E^{>0}(N,N)=0$.

\begin{example}\label[example]{neithergeneratornorcogenerator}
There exist $\End$-semidualizing modules that are neither
generators nor cogenerators and are not classically semidualizing,
even over commutative Artinian local rings.

Let $k$ be a field, write $D(-)=\Hom_k(-,k)$, and set
\[
A=k[x,y]/(x,y)^2,
\qquad
B=k[z,w]/(z,w)^2,
\qquad
R=A\otimes_k B.
\]
Put $I=D(B)$, $G=A\oplus k$, and
\[
N=G\otimes_k I.
\]
Thus $R$ is a commutative Artinian local ring of $k$-dimension
$9$, and $N$ has $k$-dimension $12$.

We first show that $N$ is an $\End$-semidualizing $R$-module. Set
$F=\End_A(G)$. Since $G$ is a generator over $A$,
\Cref{generatorex} and its proof show that $G$ is finitely
generated projective over $F$ and that the homothety map
$A\to\End_F(G)$ is an isomorphism. Also, $I$ is injective
over $B$, and the homothety map $B\to\End_B(I)$ is an
isomorphism since $\End_B(I)\cong\End_B(B)^{\mathrm{op}}\cong B$. The natural tensor--Hom identifications give
\[
E:=\End_R(N)
\cong \End_A(G)\otimes_k\End_B(I)
\cong F\otimes_k B
\]
and
\[
\End_E(N)
\cong \End_F(G)\otimes_k\End_B(I)
\cong A\otimes_k B=R.
\]
The latter identification is induced by the natural action
of $R$ on $N$, so $N$ is faithfully balanced.

Let $P_\bullet\to I$ be a resolution by finitely generated
free $B$-modules. Since $G$ is finitely generated projective
over $F$, the complex $G\otimes_k P_\bullet$ is a resolution
of $N$ by finitely generated projective $E$-modules. Moreover,
\[
\Hom_E(G\otimes_k P_\bullet,N)
\cong
\End_F(G)\otimes_k\Hom_B(P_\bullet,I)
\cong
A\otimes_k\Hom_B(P_\bullet,I).
\]
As $I$ is injective, this complex has zero cohomology in
positive degrees. Hence $\Ext_E^i(N,N)=0$ for every $i>0$,
and $N$ is $\End$-semidualizing.

Next, write
\[
N=C\oplus Q,
\qquad
C=A\otimes_k I,
\qquad
Q=k\otimes_k I.
\]
The natural identifications
\[
\End_R(C)\cong R,
\qquad
\End_R(Q)\cong B
\]
show that $C$ and $Q$ are indecomposable $R$-modules, since $R$ and $B$
are local. The modules $R$ and $D(R)$ are also
indecomposable $R$-modules. Their dimensions and socle dimensions are
\[
\begin{array}{c|cccc}
 & C & Q & R & D(R)\\ \hline
\dim_k(-) & 9 & 3 & 9 & 9\\
\dim_k\operatorname{soc}_R(-) & 2 & 1 & 4 & 1.
\end{array}
\]
Here $\operatorname{soc}_A(A)$ and
$\operatorname{soc}_B(B)$ have dimension $2$, whereas
$\operatorname{soc}_B(I)$ has dimension $1$, and
\[
\operatorname{soc}_R(C)
=
\operatorname{soc}_A(A)\otimes_k\operatorname{soc}_B(I),
\qquad
\operatorname{soc}_R(R)
=
\operatorname{soc}_A(A)\otimes_k\operatorname{soc}_B(B).
\]
Thus neither $C$ nor $Q$ is isomorphic to $R$ or $D(R)$.
By the Krull--Schmidt theorem, neither $R$ nor $D(R)$
is a direct summand of any finite direct sum of copies of
$N$. Therefore $N$ is neither a generator nor a cogenerator.

Finally, tensoring the exact sequence of $A$-modules
\[
0\longrightarrow k
\xrightarrow{\,1\mapsto x\,}A/(y)
\longrightarrow k\longrightarrow0
\]
over $k$ with $I$ gives an exact sequence of $R$-modules
\[
0\longrightarrow Q
\longrightarrow (A/(y))\otimes_k I
\longrightarrow Q\longrightarrow0.
\]
This sequence does not split: $x$ annihilates $Q\oplus Q$,
but acts nontrivially on the middle term. Consequently,
$\Ext_R^1(Q,Q)\ne0$. Since $Q$ is a direct summand of $N$,
we obtain $\Ext_R^1(N,N)\ne0$. Thus $N$ is not classically
semidualizing, and the associated
$(R,E^{\mathrm{op}})$-bimodule is not semidualizing either.
\end{example}

\end{example}

Recall that $R$ is an \emph{Artin algebra} if it is module-finite over
its center and its center is Artinian. Over a self-injective Artin algebra,
the preceding example describes all $\End$-semidualizing modules.

\begin{proposition}\label[proposition]{selfinjectiveendsemi}
Suppose $R$ is a self-injective Artin algebra. Then $N$ is
$\End$-semidualizing if and only if $N$ is a generator. 
\end{proposition}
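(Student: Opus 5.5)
The implication ``generator $\Rightarrow$ $\End$-semidualizing'' is exactly \Cref{generatorex}, so only the converse needs proof. For the converse I will use only part of the hypothesis, namely that $N$ is faithful, i.e.\ that $\iota_N$ is injective. Neither the balanced condition nor the vanishing of $\Ext_E^{>0}(N,N)$ should be needed. The plan is to embed $R$ into a finite direct sum of copies of $N$ and then use self-injectivity to split this embedding.

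First I translate faithfulness into an annihilator statement. By definition, $\iota_N(r)=0$ means $rn=0$ for all $n\in N$. So injectivity of $\iota_N$ says
\[
\bigcap_{n\in N}\operatorname{ann}_R(n)=0,
\qquad \operatorname{ann}_R(n)=\{r\in R: rn=0\}.
\]
Each $\operatorname{ann}_R(n)$ is a left ideal of $R$. An Artin algebra is left Artinian, since it has finite length as a module over its Artinian center. Therefore the descending chain condition on left ideals applies to finite intersections of these annihilators. Any minimal element among the finite intersections must equal the full intersection, so there are $n_1,\dots,n_t\in N$ with $\bigcap_{j=1}^t\operatorname{ann}_R(n_j)=0$.

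Next, define the left $R$-linear map
\[
\varphi:R\to N^{\oplus t},\qquad \varphi(r)=(rn_1,\dots,rn_t).
\]
Its kernel is $\bigcap_j\operatorname{ann}_R(n_j)=0$, so $\varphi$ is a monomorphism of left $R$-modules. Since $R$ is self-injective, ${}_RR$ is an injective left $R$-module. Hence $\varphi$ splits, and $R$ is a direct summand of $N^{\oplus t}$, which means $N$ is a generator.

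The only step needing care is the reduction from the infinite intersection to a finite one. The point to watch is that one must intersect the annihilators of individual elements, which are left ideals, rather than naively use a finite generating set of $N$: in general $rn_j=0$ for all $j$ does not imply $rN=0$. Once the finite intersection is in hand, the argument is immediate. One should also confirm that ``self-injective'' is being used on the left side, matching the side of $N$; for Artin algebras left and right self-injectivity coincide, so this causes no issue.
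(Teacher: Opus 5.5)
Your proof is correct and follows the same route as the paper: only faithfulness is used to produce an embedding $R\hookrightarrow N^{\oplus t}$, which then splits because ${}_RR$ is injective. Your descending-chain argument on finite intersections of the left ideals $\operatorname{ann}_R(n)$ makes explicit a step the paper compresses into ``as $N$ is finitely generated.'' In the noncommutative setting that phrase is best read as finite generation over the center, or else justified exactly as you do.
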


\begin{proof}
The generator implication is \Cref{generatorex}. Conversely, suppose
$N$ is faithful. Then as $N$ is finitely generated, there is an embedding $R \hookrightarrow N^{\oplus r}$ for some $r$. As $R$ is self-injective, this embedding splits, so $N$ is a generator.\qedhere

\end{proof}

\begin{corollary}
If $R$ is a commutative Artinian Gorenstein ring, then $N$ is
$\End$-semidualizing if and only if $N\cong R\oplus M$ for some
finitely generated $R$-module $M$.
\end{corollary}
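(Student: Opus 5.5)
The plan is to deduce the corollary directly from \Cref{selfinjectiveendsemi} once the hypotheses there are checked. A commutative Artinian ring is module-finite over its center (itself) with Artinian center, so $R$ is an Artin algebra. A commutative Artinian Gorenstein ring has finite injective dimension over itself, and since it has Krull dimension zero that injective dimension equals $\depth R=0$; hence $R$ is injective as an $R$-module, i.e.\ self-injective. \Cref{selfinjectiveendsemi} then says $N$ is $\End$-semidualizing if and only if $N$ is a generator. What remains is to identify generators with modules of the form $R\oplus M$.

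For the easy direction, if $N\cong R\oplus M$ then $R$ is visibly a summand of $N=N^{\oplus 1}$, so $N$ is a generator. This direction is also already covered by \Cref{generatorex} and does not need the Gorenstein hypothesis.

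For the converse, suppose $R$ is a direct summand of $N^{\oplus t}$. I want $R$ itself to be a summand of $N$, not merely of a power of $N$. There are two ways to get this.

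First, $R$ is commutative Artinian; if it is moreover local, then $R$ is indecomposable with local endomorphism ring $R$. Krull--Schmidt for finitely generated modules over the Artinian ring $R$ then shows that $R$ is isomorphic to a summand of $N$.

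Second, one can argue with surjections. Write $R\cong R_1\times\cdots\times R_s$ with each $R_i$ local, and note that $N=\bigoplus_i N_i$ with $N_i:=R_iN$. Being a generator gives a surjection $N^{\oplus t}\to R$, and the component maps $v_j$ satisfy $\sum v_j(n_j)=1$. Localizing at each maximal ideal, some $v_j$ has image not contained in the maximal ideal of $R_i$. That $v_j$ then induces a surjection $N_i\to R_i$, which splits because $R_i$ is free over $R_i$. Assembling these over $i$ gives a split surjection $N\to R$, so $N\cong R\oplus M$.

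The main (mild) obstacle is this passage from ``$R$ is a summand of $N^{\oplus t}$'' to ``$R$ is a summand of $N$''. Handling it through the local components, as in the second argument, avoids appealing to Krull--Schmidt over a non-local ring.
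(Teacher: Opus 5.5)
Your proof is correct, and its overall structure matches the paper's. Both reduce via \Cref{selfinjectiveendsemi} to showing that a finitely generated generator over a commutative Artinian ring has $R$ as a direct summand, and both handle this by writing $R=R_1\times\cdots\times R_s$ with local factors and working with $N_i=e_iN$.

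The difference is in the local step. The paper notes that $R_i$ is a summand of $N_i^{\oplus t}$ and then invokes Krull--Schmidt to conclude that $R_i$ is a summand of $N_i$. Your second argument replaces this with the elementary fact that in a local ring a sum of non-units is a non-unit. From $\sum_j v_j(e_in_j)=e_i$, some restriction $v_j|_{N_i}\colon N_i\to R_i$ hits a unit, so it is surjective and splits.

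What your route buys is independence from Krull--Schmidt and from any finite-length hypothesis. The local step works over any commutative local ring, so Artinianness enters only through \Cref{selfinjectiveendsemi} and the product decomposition, and you get an explicit splitting map. The paper's route is shorter once Krull--Schmidt is granted. You also verify explicitly that $R$ is a self-injective Artin algebra, which the paper leaves implicit.

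One correction: your worry about applying Krull--Schmidt over a non-local ring is unfounded. Krull--Schmidt holds for finite-length modules over any ring. The $R_i$ are pairwise non-isomorphic indecomposable $R$-modules with local endomorphism rings $R_i$, so uniqueness of decompositions applied to $N^{\oplus t}\cong R\oplus L$ already shows that $R_1\oplus\cdots\oplus R_s=R$ is a summand of $N$. Your first argument therefore extends beyond the local case with no extra work.
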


\begin{proof}
By \Cref{selfinjectiveendsemi}, it remains to observe that a finitely
generated generator over a commutative Artinian ring contains $R$ as a
summand. Decompose $R$ as a finite product of Artinian local rings $R_1\times\cdots\times R_n$. Let $e_i\in R$ be the idempotent corresponding to $1\in R_i$. Let $N_i=e_iN$, therefore $N\cong\bigoplus_{i=1}^nN_i$. Since $R$ is contained in a direct power of $N$, multiplying by $e_i$ shows that $R_i$ is contained in a direct power of $N_i$.
The Krull--Schmidt theorem shows that $R_i$ is a
summand of $N_i$. Taking the product gives the required summand $R$.
\end{proof}

\begin{remark}\label[remark]{remark:AR}
Over a self-injective Artin algebra, the existence of a semidualizing
bimodule (in the sense of Holm and White \cite{HW07}) which is not a projective generator is equivalent to the existence
of a nonprojective finitely generated module $M$ satisfying
$\Ext_R^{>0}(M,M\oplus R)=0$, that is, a counterexample to the
Auslander--Reiten conjecture for $R$. Indeed, Let $M$ be a nonprojective module such that $\Ext_R^{>0}(M,M\oplus R)=0$ and set $N=M\oplus R$. By \Cref{generatorex}, $N$ is End-semidualizing, but in addition $\Ext_R^{>0}(N,N)=0$, and therefore it is a nonprojective semidualizing bimodule. Conversely, if $N$ is a nonprojective semidualizing bimodule, then in particular it is End-semidualizing and \Cref{generatorex} shows that it is a generator. Moreover, since $\Ext_R^{>0}(N,N)=0$ and $R$ is self-injective, it follows that $\Ext_R^{>0}(N,N\oplus R)=0$. 
\end{remark}

We let $J(T)$ denote the Jacobson radical of a ring $T$. Recall that
$T$ is local if $J(T)$ is a maximal left, equivalently maximal right,
ideal. The following observation will be used to detect the obstruction
to the two-out-of-three property.

\begin{proposition}\label[proposition]{Homfinlength}
Let $T$ be a local ring, let $U$ be a nonzero finitely generated left
$T$-module, and let $X$ be a left $T$-module containing a nonzero
submodule of finite length. Then $\Hom_T(U,X)\ne0$.
In particular, if $R$ is an Artin algebra and $N$ is an indecomposable
nonzero finitely generated left $R$-module, then both $-\otimes_EN$ and
$\Hom_E(N,-)$ detect zero modules.
\end{proposition}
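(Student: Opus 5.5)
For the first assertion, I would produce a nonzero map $U\to X$ that factors through the simple module. Let $X'\subseteq X$ be a nonzero submodule of finite length. Since $X'$ has finite length, it has a simple submodule $S\subseteq X'\subseteq X$. Because $T$ is local, $T/J(T)$ is a division ring, so $T$ has a unique simple left module up to isomorphism, namely $T/J(T)$; hence $S\cong T/J(T)$. Since $U$ is finitely generated and nonzero, Nakayama's lemma gives $J(T)U\ne U$. Thus $U/J(T)U$ is a nonzero left module over the division ring $T/J(T)$, hence a nonzero vector space, and it surjects onto $T/J(T)\cong S$. The composite
\[
U\twoheadrightarrow U/J(T)U\twoheadrightarrow S\hookrightarrow X
\]
is then a nonzero element of $\Hom_T(U,X)$.

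For the "in particular", let $R$ be an Artin algebra with center $Z$ Artinian, and let $N$ be indecomposable, finitely generated and nonzero. Then $N$ has finite length over $Z$, and $E=\End_R(N)$ is a $Z$-submodule of $\End_Z(N)$, so $E$ is module-finite over $Z$ and hence an Artin algebra. Since $N$ is indecomposable of finite length, Fitting's lemma shows $E$ is local. I take $T=E$ and $U=N$; here $N$ is finitely generated over $E$ (even over $Z$, which acts through $E$). By \Cref{bothfunctorsfaithful}, it suffices to show that $\Hom_E(N,-)$ detects zero modules. So let $X$ be a nonzero left $E$-module and pick $0\ne x\in X$. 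The cyclic submodule $Ex$ is a quotient of $E$, and $E$ has finite length as a left $E$-module, being of finite length over $Z$. Hence $Ex$ is a nonzero submodule of finite length, and the first part gives $\Hom_E(N,X)\neq0$.

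The only delicate point is checking that $E$ is local and that every nonzero $E$-module has a nonzero finite-length submodule. Both follow from finiteness over the Artinian center, via Fitting's lemma and the fact that cyclic modules over an Artinian ring have finite length.
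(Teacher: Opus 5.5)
Your proof is correct and follows the paper's argument. For the first part you use the same composite $U\twoheadrightarrow U/J(T)U\twoheadrightarrow T/J(T)\hookrightarrow X$, obtained from Nakayama's lemma and a simple submodule of $X$. For the second part you likewise use that $E$ is local, that every nonzero $E$-module has a finite-length cyclic submodule, and \Cref{bothfunctorsfaithful} to pass from $\Hom_E(N,-)$ to $-\otimes_EN$. The only difference is that you prove locality of $E$ via Fitting's lemma and justify the finite-length claim via module-finiteness over the Artinian center; the paper cites \cite[Theorem II.2.2(a)]{AR97} for the former and simply asserts the latter.
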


\begin{proof}
By Nakayama's lemma, $U/J(T)U$ is a nonzero vector space over the
division ring $T/J(T)$, so it has a quotient isomorphic to $T/J(T)$.
A nonzero finite length submodule of $X$ has a simple submodule, which
is likewise isomorphic to $T/J(T)$. Composing the resulting surjection
$U\twoheadrightarrow T/J(T)$ with this inclusion into $X$ gives a
nonzero homomorphism.

For the claim about Artin algebras, we note from \cite[Theorem II.2.2(a)]{AR97} that $E$ is local whenever $R$ is an Artin algebra and $N$ is indecomposable. Every nonzero left $E$-module contains a nonzero finite length cyclic
submodule, and $N$ is finitely generated over $E$. Thus the first part
shows that $\Hom_E(N,-)$ detects zero modules, and the tensor assertion
then follows from \Cref{bothfunctorsfaithful}.
\end{proof}

\section{The Auslander Class}\label{sec:auslander}

For a right $R$-module $M$, we let $\epsilon_M^N:M \to \Hom_E(N,M \otimes_R N)$ be the map of right $R$-modules given by $x \mapsto f_x$ where $f_x:N \to M \otimes_R N$ is given by $f_x(y)=x \otimes y$. We just write $\epsilon_M$ if $N$ is understood.

\begin{definition}\label[definition]{def:auslander}
We say a right $R$-module $M$ belongs to the \emph{Auslander class over $R$ with respect to $N$}, denoted by $\A_N(R)$, if the following hold:
\begin{enumerate}
\item[$(1)$] $\Tor_{>0}^{R}(M,N)=0$.
\item[$(2)$] $\Ext_E^{>0}(N,M\otimes_{R} N)=0$.
\item[$(3)$] The map $\epsilon_M^N:M\rightarrow\Hom_E(N,M\otimes_{R} N)$ is an isomorphism.
\end{enumerate}
\end{definition}

\begin{prop}\label[prop]{projectiveex}
The following are equivalent for an $R$-module $N$:
\begin{enumerate}
    \item[$(1)$] $N$ is $\End$-semidualizing.
    \item[$(2)$] Every projective right $R$-module is in $\A_N(R)$.
    \item[$(3)$] Every flat right $R$-module is in $\A_N(R)$. 
\end{enumerate}
\end{prop}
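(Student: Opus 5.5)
The plan is to prove $(1)\Rightarrow(3)\Rightarrow(2)\Rightarrow(1)$. The implication $(3)\Rightarrow(2)$ is trivial, since projective modules are flat. For $(2)\Rightarrow(1)$, apply $(2)$ to $M=R_R$. Then $R\otimes_R N\cong N$ as left $E$-modules, and condition $(2)$ of \Cref{def:auslander} is exactly $\Ext_E^{>0}(N,N)=0$. Under this identification, $\epsilon_R$ sends $r$ to the map $y\mapsto ry$, so $\epsilon_R=\iota_N$. Condition $(3)$ of \Cref{def:auslander} therefore says that $N$ is faithfully balanced. (The Tor condition is automatic here.) Thus $N$ is $\End$-semidualizing.

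For $(1)\Rightarrow(3)$, let $M$ be a flat right $R$-module; I will check the three conditions one at a time.
\begin{enumerate}
\item[(i)] The Tor condition is immediate from flatness.
\item[(ii)] For the other two, write $M=\varinjlim F_\lambda$ as a filtered direct limit of finitely generated free right $R$-modules (Lazard's theorem). Then $M\otimes_R N\cong\varinjlim (F_\lambda\otimes_R N)\cong\varinjlim N^{n_\lambda}$ as left $E$-modules.
\item[(iii)] By the standing hypothesis, $N$ has a resolution $P_\bullet\to N$ by finitely generated projective left $E$-modules. Hence $\Hom_E(P_i,-)$ commutes with filtered direct limits. Since filtered colimits are exact, it follows that $\Ext_E^i(N,\varinjlim X_\lambda)\cong\varinjlim\Ext_E^i(N,X_\lambda)$ for all $i\ge0$.
\item[(iv)] Applying (iii) with $X_\lambda=N^{n_\lambda}$ gives $\Ext_E^{>0}(N,M\otimes_RN)\cong\varinjlim \Ext_E^{>0}(N,N)^{n_\lambda}=0$.
\item[(v)] The case $i=0$ of (iii) gives $\Hom_E(N,M\otimes_RN)\cong\varinjlim\Hom_E(N,F_\lambda\otimes_RN)$.
\end{enumerate}

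It remains to show that $\epsilon_M$ is an isomorphism. The map $\epsilon$ is natural in $M$, and under the identifications above, $\epsilon_M$ is the direct limit of the maps $\epsilon_{F_\lambda}$. For finitely generated free $F=R^n$, the map $\epsilon_F$ is a direct sum of $n$ copies of $\epsilon_R=\iota_N$. This is an isomorphism because $N$ is faithfully balanced. Hence $\epsilon_M$ is a direct limit of isomorphisms, and so is itself an isomorphism.

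I expect the main obstacle to be the bookkeeping in the last step. One must check that the isomorphism $\varinjlim\Hom_E(N,X_\lambda)\to\Hom_E(N,\varinjlim X_\lambda)$ is compatible with $\epsilon$. One must also check that the identification $\epsilon_R=\iota_N$ respects the right $R$-module structures. Both should follow from naturality of $\epsilon$ in $M$ and the explicit formula $x\mapsto f_x$. Note that $N$ need only be finitely presented over $E$ for the Hom statement; the full finite projective resolution is needed for the Ext statement.
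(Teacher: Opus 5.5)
Your proof is correct. Its core is the same as the paper's argument for $(2)\Rightarrow(3)$: Lazard's theorem, commutation of $\Hom_E(N,-)$ and $\Ext_E^i(N,-)$ with filtered colimits via the finitely generated projective resolution of ${}_EN$, and the identification $\epsilon_M=\varinjlim\epsilon_{F_j}$. The difference is that you arrange the cycle as $(1)\Rightarrow(3)\Rightarrow(2)\Rightarrow(1)$, which lets you skip the separate projective case.

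The paper instead proves $(1)\Rightarrow(2)$ directly. It first treats free modules $R^{(\Lambda)}$ of arbitrary rank, using commutation with arbitrary direct sums. It then handles an arbitrary projective $M$ via a complement $M\oplus L\cong R^{(\Lambda)}$ and two snake-lemma diagrams, and only afterwards runs the Lazard argument for $(2)\Rightarrow(3)$.

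The Lazard step only needs finitely generated free modules to lie in $\A_N(R)$. That follows immediately from $(1)$, since $\epsilon_{R^n}$ is $n$ copies of $\iota_N$. So your ordering shows the paper's projective-case argument is not needed for the equivalence. What the paper's route buys is a treatment of projectives that does not invoke Lazard's theorem. Its summand step amounts to the observation that all three defining conditions of $\A_N(R)$ are additive and hence pass to direct summands.
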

\begin{proof}
For $(2)\Rightarrow(1)$, if every projective right $R$-module is in $\A_N(R)$, then in particular $R$ is, and we immediately obtain that $N$ is $\End$-semidualizing from the definition: $\epsilon_R^N$ identifies with $\iota_N$.

For $(1)\Rightarrow(2)$, first suppose $M=R^{(\Lambda)}$ is free, where $\Lambda$ is any set. The standing hypothesis that ${}_EN$ has a projective resolution with finitely generated terms implies that $\Hom_E(N,-)$ and all $\Ext_E^i(N,-)$ commute with direct sums. Thus
\[
\Hom_E(N,M\otimes_RN)\cong\End_E(N)^{(\Lambda)}
\quad\text{and}\quad
\Ext_E^{>0}(N,M\otimes_RN)=0.
\]
Under this identification, $\epsilon_M^N$ is the direct sum of copies of $\iota_N$, so it is an isomorphism. The $\Tor$ condition is automatic. 

Now let $M$ be an arbitrary projective right $R$-module.
Since $M$ is projective, we automatically have
$\Tor^R_{>0}(M,N)=0$. Choose $L$ so that
$M \oplus L \cong R^{(\Lambda)}$ for some set $\Lambda$.
Then there is an isomorphism
\[
(M \otimes_R N) \oplus (L \otimes_R N)
\cong N^{(\Lambda)}
\]
of left $E$-modules. Since $\Ext_E^{>0}(N,N)=0$,
commutation with direct sums gives
\[
\Ext_E^i(N,N^{(\Lambda)})
\cong \Ext_E^i(N,N)^{(\Lambda)}=0
\qquad\text{for every }i>0.
\]
Taking direct summands, we get
$\Ext_E^{>0}(N,M \otimes_R N)=0$.

Letting $i:M \to R^{(\Lambda)}$ denote the natural split
injection and $p:R^{(\Lambda)}\to M$ the natural split
surjection, we have commutative diagrams:
\[
\begin{tikzcd}[cramped]
    0 & M & {R^{(\Lambda)}} & L & 0 \\
    0 & {\Hom_E(N,M \otimes_R N)}
      & {\Hom_E(N,R^{(\Lambda)} \otimes_R N)}
      & {\Hom_E(N,L \otimes_R N)} & 0
    \arrow[from=1-1, to=1-2]
    \arrow["i", from=1-2, to=1-3]
    \arrow["{\epsilon_M}"', from=1-2, to=2-2]
    \arrow[from=1-3, to=1-4]
    \arrow["{\epsilon_{R^{(\Lambda)}}}", from=1-3, to=2-3]
    \arrow[from=1-4, to=1-5]
    \arrow["{\epsilon_L}", from=1-4, to=2-4]
    \arrow[from=2-1, to=2-2]
    \arrow[from=2-2, to=2-3]
    \arrow[from=2-3, to=2-4]
    \arrow[from=2-4, to=2-5]
\end{tikzcd}
\]
and
\[
\begin{tikzcd}[cramped]
    0 & L & {R^{(\Lambda)}} & M & 0 \\
    0 & {\Hom_E(N,L \otimes_R N)}
      & {\Hom_E(N,R^{(\Lambda)} \otimes_R N)}
      & {\Hom_E(N,M \otimes_R N)} & 0.
    \arrow[from=1-1, to=1-2]
    \arrow[from=1-2, to=1-3]
    \arrow["{\epsilon_L}"', from=1-2, to=2-2]
    \arrow["p", from=1-3, to=1-4]
    \arrow["{\epsilon_{R^{(\Lambda)}}}", from=1-3, to=2-3]
    \arrow[from=1-4, to=1-5]
    \arrow["{\epsilon_M}", from=1-4, to=2-4]
    \arrow[from=2-1, to=2-2]
    \arrow[from=2-2, to=2-3]
    \arrow[from=2-3, to=2-4]
    \arrow[from=2-4, to=2-5]
\end{tikzcd}
\]
The rows are split exact, since both $-\otimes_R N$ and
$\Hom_E(N,-)$ preserve split exact sequences. Since
$\epsilon_{R^{(\Lambda)}}$ is an isomorphism, it follows
from applying the snake lemma to the first diagram that
$\epsilon_M$ is injective, and applying it to the second
gives that $\epsilon_M$ is surjective, so it is an isomorphism.
Therefore $M \in \mathcal{A}_N(R)$.

Note that $(3)\Rightarrow(2)$ is immediate, so we conclude the proof with $(2)\Rightarrow(3)$. Let $M$ be a flat right $R$-module. By Lazard's theorem, we may write $M=\varinjlim_jF_j$ where each $F_j$ is a finitely generated free right $R$-module. Since ${}_EN$ has a projective resolution with finitely generated terms, $\Hom_E(N,-)$ and all $\Ext_E^i(N,-)$ commute with filtered direct limits. Consequently, for $i>0$,
\[
\Ext_E^i(N,M\otimes_RN)
\cong\varinjlim_j\Ext_E^i(N,F_j\otimes_RN)=0.
\]
By naturality of the unit map, $\epsilon_M^N$ identifies with $\varinjlim_j\epsilon_{F_j}^N$, and hence is an isomorphism. Finally, $\Tor_{>0}^R(M,N)=0$ because $M$ is flat.
\end{proof}

For semidualizing modules in the classical sense, the Auslander class enjoys the so-called ``two-out-of-three property" from which many of their key properties can be derived. As we will see, for $\End$-semidualizing modules, subtleties arise due to the noncommutative nature of their definition, and as a consequence the two-out-of-three property will not hold in general. We do however always have the following proposition that shows that it is only one aspect of the two-out-of-three property that may fail in general.

\begin{prop}\label[prop]{auslanderresolving}
Suppose $0 \to A \to B \to C \to 0$ is a short exact sequence of right $R$-modules. If $C \in \A_N(R)$, then $B \in \A_N(R)$ if and only if $A \in \A_N(R)$.
\end{prop}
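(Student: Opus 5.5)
The proof has three layers: the long exact sequence for $\Tor^R(-,N)$, then the long exact sequence for $\Ext_E(N,-)$, and finally a diagram chase with the five lemma applied to the maps $\epsilon$.

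\textbf{Tor.} Apply $-\otimes_R N$ to $0\to A\to B\to C\to 0$. Since $\Tor^R_{>0}(C,N)=0$, the long exact sequence shows that $\Tor^R_i(A,N)\cong\Tor^R_i(B,N)$ for all $i>0$. It also shows that
\[
0\to A\otimes_RN\to B\otimes_RN\to C\otimes_RN\to 0
\]
is exact, because $\Tor_1^R(C,N)=0$. This is a short exact sequence of left $E$-modules, as the maps are $E$-linear. So condition (1) holds for $A$ if and only if it holds for $B$.

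\textbf{Ext.} Apply $\Hom_E(N,-)$ to the sequence of $E$-modules above. We know $\Ext_E^{>0}(N,C\otimes_RN)=0$. The long exact sequence then gives $\Ext^i_E(N,A\otimes_RN)\cong\Ext^i_E(N,B\otimes_RN)$ for every $i\geq 2$. In low degrees we get the exact sequence
\[
0\to\Hom_E(N,A\otimes N)\to\Hom_E(N,B\otimes N)\to\Hom_E(N,C\otimes N)\xrightarrow{\delta}\Ext^1_E(N,A\otimes N)\to\Ext^1_E(N,B\otimes N)\to 0 .
\]
By naturality of $\epsilon$, the maps $\epsilon_A,\epsilon_B,\epsilon_C$ fit into a commutative ladder with $0\to A\to B\to C\to0$ on top and the first three terms of this sequence on the bottom.

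\textbf{Direction $A\in\A_N(R)\Rightarrow B\in\A_N(R)$.} Here $\Ext^1_E(N,A\otimes N)=0$, so $\Ext^1_E(N,B\otimes N)=0$ and the bottom row is short exact. Since $\epsilon_A$ and $\epsilon_C$ are isomorphisms, the five lemma makes $\epsilon_B$ an isomorphism.

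\textbf{Direction $B\in\A_N(R)\Rightarrow A\in\A_N(R)$.} This is the main point. Since $\epsilon_B$ and $\epsilon_C$ are isomorphisms, the top row being exact forces $\Hom_E(N,B\otimes N)\to\Hom_E(N,C\otimes N)$ to be surjective. Hence $\delta=0$, so $\Ext^1_E(N,A\otimes N)$ injects into $\Ext^1_E(N,B\otimes N)=0$ and vanishes. The bottom row is again short exact, and the five lemma (or the snake lemma) gives that $\epsilon_A$ is an isomorphism.

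Together with the Tor and higher Ext isomorphisms, this proves the equivalence. I expect no serious obstacle beyond checking that the ladder commutes, which follows from the naturality of $\epsilon$ together with the fact that the connecting data are $E$-linear.
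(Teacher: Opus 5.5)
Your proof is correct and follows essentially the same route as the paper: the $\Tor^R(-,N)$ long exact sequence, then the $\Ext_E(N,-)$ long exact sequence, then the ladder of $\epsilon$'s and the five lemma. The only cosmetic difference is that the paper handles both directions at once. It notes that surjectivity of $\Hom_E(N,B\otimes_R N)\to\Hom_E(N,C\otimes_R N)$ follows from $\epsilon_C$ alone, via the surjection $B\to C$, which gives $\Ext^1_E(N,A\otimes_R N)\cong\Ext^1_E(N,B\otimes_R N)$ unconditionally; you instead split into two cases.
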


\begin{proof}
Applying $- \otimes_R N$ to the short exact sequence $0 \to A \to B \to C \to 0$ and considering the associated long exact sequence in $\Tor$, we see that $\Tor_i^R(A,N) \cong \Tor^R_i(B,N)$ for all $i>0$, and that there is a short exact sequence 
\[0 \to A \otimes_R N \to B \otimes_R N \to C \otimes_R N \to 0\]
of left $E$-modules. Applying $\Hom_E(N,-)$ to this sequence and considering the induced long exact sequence in $\Ext$, we see that $\Ext^i_E(N,A \otimes_R N) \cong \Ext^i_E(N,B \otimes_R N)$ for all $i>1$, and that there is an exact sequence 

\[\begin{tikzpicture}[descr/.style={fill=white,inner sep=1.5pt}]
        \matrix (m) [
            matrix of math nodes,
            row sep=1em,
            column sep=1.8em,
            text height=1.5ex, text depth=0.25ex
        ]
        { 0 & \Hom_E(N,A \otimes_R N) & \Hom_E(N,B \otimes_R N) & \Hom_E(N,C \otimes_R N) \\
           \mbox{} & \Ext^1_E(N,A \otimes_R N) & \Ext^1_E(N,B \otimes_R N) & 0 \\
        };
\path[overlay,->, font=\scriptsize,>=latex]
        (m-1-1) edge (m-1-2)
        (m-1-2) edge (m-1-3)
        (m-1-3) edge (m-1-4);
        \path[overlay,->, font=\scriptsize,>=latex]
        (m-1-4) edge[out=355,in=175] (m-2-2)
        (m-2-2) edge (m-2-3)
        (m-2-3) edge (m-2-4);   
\end{tikzpicture}\]

The top row fits in as the bottom row of the commutative diagram:
\[\begin{tikzcd}[cramped]
	0 & A & B & C & 0 \\
	0 & {\Hom_E(N,A \otimes_R N)} & {\Hom_E(N,B \otimes_R N)} & {\Hom_E(N,C \otimes_R N)} & \mbox{}.
	\arrow[from=1-1, to=1-2]
	\arrow[from=1-2, to=1-3]
	\arrow["{\epsilon^N_A}"', from=1-2, to=2-2]
	\arrow[from=1-3, to=1-4]
	\arrow["{\epsilon^N_B}"', from=1-3, to=2-3]
	\arrow[from=1-4, to=1-5]
	\arrow["{\epsilon^N_C}"', from=1-4, to=2-4]
	\arrow[from=2-1, to=2-2]
	\arrow[from=2-2, to=2-3]
	\arrow[from=2-3, to=2-4]
\end{tikzcd}\]

As $C \in \A_N(R)$, we have that $\epsilon^N_C$ is an isomorphism, and it follows that the map \[
\Hom_E(N,B \otimes_R N) \to \Hom_E(N,C \otimes_R N)
\]
is surjective. Thus $\Ext^1_E(N,A \otimes_R N) \cong \Ext^1_E(N,B \otimes_R N)$ as well, and moreover we have from the five lemma that $\epsilon^N_A$ is an isomorphism if and only if $\epsilon^N_B$ is an isomorphism, completing the proof.

\end{proof}

\begin{prop}\label[prop]{auslander2outof3}
Suppose the functor $- \otimes_E N$ detects zero objects. Then $\mathcal{A}_N(R)$ satisfies the two-out-of-three property.
\end{prop}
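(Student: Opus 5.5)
The plan is to reduce to a single case and then use zero-detection exactly once. By \Cref{auslanderresolving}, whenever the end term $C$ of a short exact sequence $0\to A\to B\to C\to 0$ lies in $\A_N(R)$, the class $\A_N(R)$ already has the two-out-of-three property for that sequence. So it suffices to assume $A,B\in\A_N(R)$ and prove $C\in\A_N(R)$. The only nontrivial point is the vanishing of $\Tor_1^R(C,N)$; once that holds, the remaining conditions follow by diagram chasing.

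\textbf{Tor conditions.} The long exact $\Tor$ sequence, together with $\Tor^R_{>0}(A,N)=\Tor^R_{>0}(B,N)=0$, gives $\Tor_i^R(C,N)=0$ for $i\ge 2$. It also gives an exact sequence of left $E$-modules
\[
0\to K\to A\otimes_RN\xrightarrow{\ \alpha\ } B\otimes_RN\to C\otimes_RN\to 0,\qquad K:=\Tor_1^R(C,N).
\]
The $E$-action on $K$ is induced functorially from the $E$-action on $N$.

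\textbf{Vanishing of $K$ (the main step).} Apply $\Hom_E(N,-)$. Left exactness shows that $\Hom_E(N,K)$ is the kernel of
\[
\Hom_E(N,\alpha):\Hom_E(N,A\otimes_RN)\to\Hom_E(N,B\otimes_RN).
\]
By naturality of $\epsilon$, this map is identified, via the isomorphisms $\epsilon_A$ and $\epsilon_B$, with the injection $A\to B$. Hence $\Hom_E(N,K)=0$. By hypothesis $-\otimes_EN$ detects zero objects, so \Cref{bothfunctorsfaithful} shows that $\Hom_E(N,-)$ does as well. Therefore $K=0$, that is, $\Tor_1^R(C,N)=0$. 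This is the step where the zero-detection hypothesis is essential, and I expect it to be the only real obstacle.

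\textbf{Ext and $\epsilon$ conditions.} We now have a short exact sequence $0\to A\otimes_RN\to B\otimes_RN\to C\otimes_RN\to0$. In its long exact $\Ext_E(N,-)$ sequence, $\Ext_E^i(N,C\otimes_RN)$ sits between $\Ext^i_E(N,B\otimes_RN)=0$ and $\Ext^{i+1}_E(N,A\otimes_RN)=0$ for $i\ge1$, so it vanishes. Since $\Ext^1_E(N,A\otimes_RN)=0$, the sequence
\[
0\to\Hom_E(N,A\otimes_RN)\to\Hom_E(N,B\otimes_RN)\to\Hom_E(N,C\otimes_RN)\to0
\]
is exact. It receives the vertical maps $\epsilon_A,\epsilon_B,\epsilon_C$ from $0\to A\to B\to C\to0$, exactly as in the diagram of \Cref{auslanderresolving}. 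Since $\epsilon_A$ and $\epsilon_B$ are isomorphisms, the five lemma shows $\epsilon_C$ is an isomorphism. Hence $C\in\A_N(R)$.
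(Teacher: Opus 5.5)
Your proposal is correct and follows the paper's proof essentially step for step. You reduce via \Cref{auslanderresolving} to the case $A,B\in\A_N(R)$, deduce $\Hom_E(N,\Tor_1^R(C,N))=0$ from the isomorphisms $\epsilon_A,\epsilon_B$, and kill $\Tor_1^R(C,N)$ using \Cref{bothfunctorsfaithful}. You then finish with the long exact $\Ext_E(N,-)$ sequence and the five lemma, exactly as the paper does.
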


\begin{proof}
We may suppose $N \ne 0$. From Proposition \ref{auslanderresolving}, it suffices to show that if $0 \to A \to B \to C \to 0$ is a short exact sequence of right $R$-modules with $A,B \in \mathcal{A}_N(R)$, then $C \in \mathcal{A}_N(R)$. Applying $- \otimes_R N$ to the exact sequence above and considering the long exact sequence in $\Tor$, we see that $\Tor^R_{>1}(C,N)=0$, and we have an exact sequence of $E$-modules
\[0 \to \Tor^R_1(C,N) \to A \otimes_R N \to B \otimes_R N \to C \otimes_R N \to 0.\]
Applying $\Hom_E(N,-)$, we get an exact sequence
\[0 \to \Hom_E(N,\Tor^R_1(C,N)) \to \Hom_E(N,A \otimes_R N) \to \Hom_E(N,B \otimes_R N)\]
There is a commutative diagram:
\[\begin{tikzcd}[cramped]
	A & B \\
	{\Hom_E(N,A \otimes_R N)} & {\Hom_E(N,B \otimes_R N)}
	\arrow[from=1-1, to=1-2]
	\arrow["{\epsilon_A}"', from=1-1, to=2-1]
	\arrow["{\epsilon_B}"', from=1-2, to=2-2]
	\arrow[from=2-1, to=2-2]
\end{tikzcd}\]
whose vertical arrows are isomorphisms, from which we see that the map $\Hom_E(N,A \otimes_R N) \to \Hom_E(N,B \otimes_R N)$ is injective. Thus $\Hom_E(N,\Tor^R_1(C,N))=0$. From \Cref{bothfunctorsfaithful}, we have that $\Hom_E(N,-)$ detects zero objects, and it follows that $\Tor^R_1(C,N)=0$. We thus have a short exact sequence 
\[0 \to A \otimes_R N \to B \otimes_R N \to C \otimes_R N \to 0.\]
Applying $\Hom_E(N,-)$ to this exact sequence and considering the long exact sequence in $\Ext$, we see that $\Ext^{>0}_E(N,C \otimes_R N)=0$ and that there is a short exact sequence fitting in as the bottom row of the commutative diagram:
\[\begin{tikzcd}[cramped]
	0 & A & B & C & 0 \\
	0 & {\Hom_E(N,A \otimes_R N)} & {\Hom_E(N,B \otimes_R N)} & {\Hom_E(N,C \otimes_R N)} & 0.
	\arrow[from=1-1, to=1-2]
	\arrow[from=1-2, to=1-3]
	\arrow["{\epsilon_A}"', from=1-2, to=2-2]
	\arrow[from=1-3, to=1-4]
	\arrow["{\epsilon_B}"', from=1-3, to=2-3]
	\arrow[from=1-4, to=1-5]
	\arrow["{\epsilon_C}"', from=1-4, to=2-4]
	\arrow[from=2-1, to=2-2]
	\arrow[from=2-2, to=2-3]
	\arrow[from=2-3, to=2-4]
	\arrow[from=2-4, to=2-5]
\end{tikzcd}\]

As $A,B \in \mathcal{A}_N(R)$, $\epsilon_A$ and $\epsilon_B$ are isomorphisms, and then so is $\epsilon_C$ by the five lemma, so $C \in \mathcal{A}_N(R)$, as claimed.
\end{proof}

\begin{corollary}\label[corollary]{cor:exactSeq}
Suppose the functor $- \otimes_E N$ detects zero objects. Consider the following exact sequence of right $R$-modules
\[
0\rightarrow M_1\rightarrow M_2\rightarrow\cdots\rightarrow M_n\rightarrow0.
\]
If $M_i\in\mathcal{A}_N(R)$ for all $i\neq j$, then $M_j\in \mathcal{A}_N(R)$.
\end{corollary}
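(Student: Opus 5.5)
We argue by induction on the length $n$ of the exact sequence, using \Cref{auslander2outof3} as the engine. The two-out-of-three property is available because $-\otimes_E N$ detects zero objects. If $n=1$, then $M_1=0$, and $0$ lies in $\A_N(R)$. If $n=2$, then $M_1\cong M_2$. Since the three defining conditions of $\A_N(R)$ are invariant under isomorphism (naturality of $\epsilon$), the claim holds. The case $n=3$ is exactly \Cref{auslander2outof3}.

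For $n\ge 4$, split the sequence at an appropriate spot. Let $K=\coker(M_1\to M_2)$, which is also the image of $M_2\to M_3$. This gives a short exact sequence
\[0\to M_1\to M_2\to K\to 0\]
and a shorter exact sequence
\[0\to K\to M_3\to\cdots\to M_n\to 0\]
of length $n-1$. The choice of splitting point depends on where $j$ lies, so we treat two cases.

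If $j\ge 3$, then $M_1,M_2\in\A_N(R)$. By \Cref{auslander2outof3} applied to the short sequence, $K\in\A_N(R)$. The long sequence then has all terms except $M_j$ in $\A_N(R)$, and induction gives $M_j\in\A_N(R)$.

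If $j\in\{1,2\}$, split instead at the other end. Let $K'=\ker(M_{n-1}\to M_n)$, giving
\[0\to K'\to M_{n-1}\to M_n\to 0\qquad\text{and}\qquad 0\to M_1\to\cdots\to M_{n-2}\to K'\to 0.\]
Since $n\ge 4$, we have $j\le 2\le n-2$, so $M_{n-1},M_n\in\A_N(R)$. By \Cref{auslander2outof3}, $K'\in\A_N(R)$. Induction applied to the sequence of length $n-1$ then gives $M_j\in\A_N(R)$.

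There is no real obstacle here. The only care needed is choosing the splitting point so that the two known terms in the short exact sequence are the correct ones. With the two-out-of-three property in hand, any two known terms suffice to conclude the third.
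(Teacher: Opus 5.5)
Your proof is correct and follows the same approach as the paper, whose proof is the one-line ``induction on $n$, using \Cref{auslander2outof3} and \Cref{auslanderresolving}''; your argument carries out that induction explicitly. Splitting off a short exact sequence at the end of the sequence away from $M_j$ and then applying the full two-out-of-three property is exactly the bookkeeping that induction requires.
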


\begin{proof}
By induction on $n$, using \Cref{auslander2outof3} and \Cref{auslanderresolving}. 
\end{proof}

\begin{corollary}\label[corollary]{cor:auslanderFlatDim}
Suppose the functor $- \otimes_E N$ detects zero objects. The following conditions are equivalent:
\begin{enumerate}
\item The $R$-module $N$ is End-semidualizing.
\item The class $\mathcal{A}_N(R)$ contains every right $R$-module of finite flat dimension.
\end{enumerate}
\end{corollary}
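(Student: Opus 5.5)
The implication $(2)\Rightarrow(1)$ is immediate: $R$ has flat dimension $0$, so $R\in\A_N(R)$. As in the proof of \Cref{projectiveex}, $\epsilon_R^N$ identifies with $\iota_N$, and $R\otimes_RN\cong N$, so the conditions defining $R\in\A_N(R)$ say exactly that $N$ is $\End$-semidualizing. The zero-detection hypothesis is not needed in this direction.

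For $(1)\Rightarrow(2)$, let $M$ be a right $R$-module with $\fd_R M=n<\infty$. I would first choose a flat resolution of length $n$. Take any projective resolution $\cdots\to P_1\to P_0\to M\to 0$ of right $R$-modules and let $K$ be the $n$-th syzygy. Since $\fd_RM=n$, the standard dimension-shifting argument shows that $K$ is flat. This gives an exact sequence
\[
0\to K\to P_{n-1}\to\cdots\to P_0\to M\to 0
\]
in which $K$ is flat and every $P_i$ is projective.

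By \Cref{projectiveex}, which uses that $N$ is $\End$-semidualizing, every term of this sequence other than $M$ lies in $\A_N(R)$. Since $-\otimes_EN$ detects zero objects, \Cref{cor:exactSeq} applies to this finite exact sequence with $M$ as the single unknown term, so $M\in\A_N(R)$.

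The main point needing care is that \Cref{cor:exactSeq} really covers the last position, that is, the case where the unknown term is a cokernel. That case is exactly where the full two-out-of-three property of \Cref{auslander2outof3} is needed, rather than just \Cref{auslanderresolving}. To be safe, I would spell out the induction directly. Break the sequence into short exact sequences $0\to K_{i+1}\to P_i\to K_i\to 0$, with $K_0=M$ and $K_n=K$. Then move from $i=n-1$ down to $i=0$: at each step $K_{i+1}$ and $P_i$ are already known to lie in $\A_N(R)$, and \Cref{auslander2outof3} gives $K_i\in\A_N(R)$. At $i=0$ this yields $M\in\A_N(R)$.
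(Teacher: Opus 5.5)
Your proof is correct and follows the paper's argument. The paper also derives the result directly from \Cref{projectiveex}, which puts flat and projective modules in $\A_N(R)$ and shows that $R\in\A_N(R)$ already forces the $\End$-semidualizing condition. It then applies \Cref{cor:exactSeq} to a finite resolution of $M$; your explicit downward induction via \Cref{auslander2outof3} is just the cokernel case of that corollary unpacked.
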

\begin{proof}
It follows immediately from \Cref{cor:exactSeq} and \Cref{projectiveex}.
\end{proof}

 The next result applies \Cref{Homfinlength} to the Artin algebra setting:

\begin{proposition}\label[proposition]{artinauslanderclass}
Suppose $R$ is an Artin algebra and that $N$ is an indecomposable $R$-module. Then $\A_N(R)$ satisfies the $2$-out-$3$ property. Moreover, $N$ is End-semidualizing if and only if $\A_N(R)$ contains every right $R$-module of finite flat dimension.
\end{proposition}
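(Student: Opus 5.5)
The plan is to reduce the statement to results already established, with Proposition \ref{Homfinlength} supplying the zero-detection hypothesis.

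Since $R$ is an Artin algebra and $N$ is an indecomposable finitely generated left $R$-module, the second part of Proposition \ref{Homfinlength} shows that $-\otimes_E N$ detects zero modules. To apply it, I first check that $N \neq 0$; indecomposable modules are nonzero by convention. In the proof of Proposition \ref{Homfinlength}, this property comes from $E$ being local, by \cite[Theorem II.2.2(a)]{AR97}, together with the fact that every nonzero left $E$-module contains a nonzero finite length cyclic submodule; that fact holds because $E$ is itself an Artin algebra. The standing hypothesis that ${}_EN$ has a resolution by finitely generated projectives holds automatically here, since $R$ is module-finite over its center.

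With zero detection in hand, Proposition \ref{auslander2outof3} gives that $\A_N(R)$ satisfies the two-out-of-three property. This proves the first assertion.

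For the equivalence, I invoke Corollary \ref{cor:auslanderFlatDim}, whose only hypothesis is again that $-\otimes_E N$ detects zero objects. For the reader, I would also spell out the two directions:
\begin{enumerate}
\item[$(\Rightarrow)$] If $N$ is End-semidualizing, Proposition \ref{projectiveex} places every flat right $R$-module in $\A_N(R)$. A right module $M$ of finite flat dimension $n$ has a flat resolution $0\to F_n\to\cdots\to F_0\to M\to 0$. Corollary \ref{cor:exactSeq}, applied to this exact sequence, then puts $M$ in $\A_N(R)$.
\item[$(\Leftarrow)$] The module $R_R$ has flat dimension zero, so $R\in\A_N(R)$. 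Then $\epsilon_R^N=\iota_N$ is an isomorphism and $\Ext_E^{>0}(N,N)=0$, which is exactly the definition of End-semidualizing, as in the implication $(2)\Rightarrow(1)$ of Proposition \ref{projectiveex}.
\end{enumerate}

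No step is a real obstacle. The only point needing care is confirming that the hypotheses of Proposition \ref{Homfinlength} hold, namely that $N$ is nonzero and that $E$ is local.
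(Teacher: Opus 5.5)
Your proposal is correct and follows the paper's proof: locality of $E$ from \cite[Theorem II.2.2(a)]{AR97} feeds into \Cref{Homfinlength} to give zero detection for $-\otimes_E N$, and then \Cref{auslander2outof3} and \Cref{cor:auslanderFlatDim} yield both assertions. Your explicit treatment of the two directions, via \Cref{projectiveex} and \Cref{cor:exactSeq}, simply unpacks the proof of \Cref{cor:auslanderFlatDim}.
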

\begin{proof}
It follows from \cite[Theorem II.2.2(a)]{AR97} that $E$ is local, and then \Cref{Homfinlength} implies that the functor $- \otimes_E N$ detects zero objects. The result then follows immediately from \Cref{auslander2outof3}, and \Cref{cor:auslanderFlatDim}.
\end{proof}

\section{The Bass Class}\label{sec:bass}
We recall for any $R$-module $M$, that the trace map $\delta^N_M:\Hom_R(N,M) \otimes_E N \to M$ is given by $f \otimes x \mapsto f(x)$. We write $\delta_M$ if $N$ is understood.
\begin{defn}\label[defn]{def:bass}
We say a left $R$-module $M$ belongs to the \emph{Bass class} over $R$ with respect to $N$, denoted by $\mathcal{B}_N(R)$, if the following hold:
\begin{enumerate}
\item[$(1)$] $\Ext^{>0}_R(N,M)=0$.
\item[$(2)$] $\Tor^E_{>0}(\Hom_R(N,M),N)=0$.
\item[$(3)$] The trace map $\delta^N_M:\Hom_R(N,M) \otimes_E N \to M$ is an isomorphism. 
\end{enumerate}
\end{defn}

\begin{proposition}\label[proposition]{injectiveex}
The following are equivalent for an $R$-module $N$:
\begin{enumerate}
\item $N$ is $\End$-semidualizing.
\item Every injective left $R$-module is in $\mathcal{B}_N(R)$.
\end{enumerate}
\end{proposition}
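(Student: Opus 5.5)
The plan is to reduce the statement to \Cref{projectiveex} by character duality. Write $(-)^+=\Hom_{\mathbb Z}(-,\mathbb Q/\mathbb Z)$. Every injective left $R$-module is a direct summand of $F^+$ for some free (in particular flat) right $R$-module $F$. For instance, take $F=(R_R)^{(\Lambda)}$ with $\Lambda$ large, since $R^+$ is an injective cogenerator and $(R^{(\Lambda)})^+\cong (R^+)^{\Lambda}$. Conversely, $M^+$ is injective whenever $M$ is a flat right $R$-module. Moreover $\mathcal B_N(R)$ is closed under direct summands, because all three defining conditions are additive and a direct summand of an isomorphism is an isomorphism. So it suffices to show the following: for a flat right $R$-module $M$, we have $M\in\A_N(R)$ if and only if $M^+\in\B_N(R)$. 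Then $(2)$ of the present statement becomes equivalent to $(3)$ of \Cref{projectiveex}, at least on the injectives of the form $M^+$. This already gives $(1)\Rightarrow(2)$. For $(2)\Rightarrow(1)$, apply the criterion to $M=R$: since $R^+$ is injective, $R^+\in\B_N(R)$, hence $R\in\A_N(R)$, and then $\epsilon_R^N=\iota_N$ together with $\Ext_E^{>0}(N,N)=0$ gives $(1)$.

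To prove the duality criterion, I will compare the three conditions term by term using standard isomorphisms. First, adjunction gives $\Hom_R(N,M^+)\cong(M\otimes_RN)^+$ as right $E$-modules. Deriving this, with $N$ finitely generated over the Noetherian ring $R$, gives $\Ext^i_R(N,M^+)\cong\Tor_i^R(M,N)^+$. Hence condition $(1)$ of $\B$ for $M^+$ matches condition $(1)$ of $\A$ for $M$; for flat $M$ both hold automatically.

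Next, write $X=M\otimes_RN$. Since $N$ has a resolution by finitely generated projective $E$-modules, the evaluation isomorphism gives $\Tor_i^E(X^+,N)\cong\Ext_E^i(N,X)^+$. This matches condition $(2)$ of the two classes.

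Finally, under the identification $\Hom_R(N,M^+)\otimes_EN\cong X^+\otimes_EN\cong\Hom_E(N,X)^+$, I will check that the trace map $\delta_{M^+}$ corresponds to $(\epsilon_M)^+$. This is a routine element chase: both maps send $\phi\otimes n$ to the functional $m\mapsto\phi(m\otimes n)$. Since $(-)^+$ is faithfully exact, $\delta_{M^+}$ is an isomorphism exactly when $\epsilon_M$ is. The argument also works without flatness, which is consistent with the paper's later \Cref{character-foxby-duality}.

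The main obstacle I expect is the naturality check: confirming that the evaluation isomorphisms $X^+\otimes_EN\cong\Hom_E(N,X)^+$ and their derived versions are compatible with the bimodule structures, and that they carry $\delta_{M^+}$ to $\epsilon_M^+$ rather than to some twisted map. I would verify this directly on elementary tensors, using the finite presentation of ${}_EN$ in the same way as in the proof of \Cref{bothfunctorsfaithful}.
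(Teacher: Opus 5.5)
Your proposal is correct, but it takes a different route from the paper.

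The paper argues directly with an arbitrary injective $I$. It applies the evaluation isomorphism $\Hom_R(N,I)\otimes_E Q\cong\Hom_R(\Hom_E(Q,N),I)$ degreewise to a resolution of ${}_EN$ by finitely generated projectives, which gives $\Tor_i^E(\Hom_R(N,I),N)\cong\Hom_R(\Ext_E^i(N,N),I)$. In degree zero, a commutative square identifies $\delta_I^N$ with $\Hom_R(\iota_N,I)$. Both directions then follow by testing against an injective cogenerator.

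You instead first establish $M\in\A_N(R)\Leftrightarrow M^+\in\B_N(R)$, which is exactly the paper's later \Cref{character-foxby-duality}, proved there by the same element chase you describe. You then transport \Cref{projectiveex} across this equivalence, using that every injective is a summand of $(R^{(\Lambda)})^+\cong(R^+)^{\Lambda}$ and that $\B_N(R)$ is closed under summands. All of these steps check out, including the compatibility of $\Hom_R(N,M^+)\cong(M\otimes_RN)^+$ with the right $E$-module structures.

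The two routes trade off as follows.
\begin{itemize}
\item The paper's argument is more self-contained. For each individual injective $I$ it describes the Bass conditions explicitly in terms of $\Ext_E^{>0}(N,N)$ and $\iota_N$, with no need to know how injectives sit inside character modules.
\item Your argument reuses the Auslander-side computation instead of redoing the unit/trace comparison. As a by-product it yields the general character duality between the two Foxby classes.
\end{itemize}

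Two minor points. For $(1)\Rightarrow(2)$ you only need the free case of \Cref{projectiveex}. The identity $\Ext^i_R(N,M^+)\cong\Tor_i^R(M,N)^+$ is pure adjunction, so finite generation of $N$ over $R$ plays no role there.
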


\begin{proof}
Let $I$ be an injective left $R$-module, and let $P\to N$ be a resolution of the left $E$-module $N$ by finitely generated projective left $E$-modules. The $R$-action on $N$ makes $\Hom_E(P,N)$ a complex of left $R$-modules. For each finitely generated projective left $E$-module $Q$, there is a natural evaluation isomorphism of abelian groups
\[
\Hom_R(N,I)\otimes_EQ
\xrightarrow{\ \cong\ }
\Hom_R(\Hom_E(Q,N),I),
\qquad
f\otimes q\longmapsto\bigl(h\mapsto f(h(q))\bigr).
\]
This is immediate for finite free modules, and then follows for their direct summands. Applying it degreewise to $P$, and using the exactness of $\Hom_R(-,I)$, gives natural isomorphisms
\begin{equation}\label{eq:injective-evaluation}
\Tor_i^E(\Hom_R(N,I),N)
\cong\Hom_R(\Ext_E^i(N,N),I)
\qquad(i\geq0).
\end{equation}
For $i=0$, the isomorphism is the map
\[
\theta_I:\Hom_R(N,I)\otimes_EN\longrightarrow
\Hom_R(\End_E(N),I),\qquad
\theta_I(f\otimes x)(s)=f(s(x)).
\]
Moreover, we have the following commutative diagram of abelian groups:
\[
\begin{tikzcd}[column sep=large]
\Hom_R(N,I)\otimes_EN \arrow[r,"\theta_I"]\arrow[d,"\delta_I^N"']
& \Hom_R(\End_E(N),I)\arrow[d,"{\Hom_R(\iota_N,I)}"]\\
I\arrow[r,"\gamma_I"']&\Hom_R(R,I),
\end{tikzcd}
\]
where $\gamma_I$ denotes the natural isomorphism.

Now suppose every injective left $R$-module belongs to $\B_N(R)$. By \eqref{eq:injective-evaluation},
$\Hom_R(\Ext_E^i(N,N),I)=0$ for every $i>0$ and every injective $I$. Taking $I$ to be an injective cogenerator shows that $\Ext_E^{>0}(N,N)=0$. Since $\theta_I$ and $\delta_I^N$ are isomorphisms, the diagram shows that $\Hom_R(\iota_N,I)$ is an isomorphism for every injective $I$. If $I$ is an injective cogenerator, then $\iota_N$ is an isomorphism. Thus $N$ is $\End$-semidualizing.

Conversely, suppose $N$ is $\End$-semidualizing and fix an injective left $R$-module $I$. The vanishing of $\Tor_{>0}^E(\Hom_R(N,I),N)$ follows from \eqref{eq:injective-evaluation}, and $\Ext_R^{>0}(N,I)=0$ is automatic. Since $\iota_N$ is an isomorphism, the diagram shows that $\delta_I^N$ is an isomorphism. Thus $I\in\B_N(R)$.
\end{proof}

\begin{proposition}\label[proposition]{morita-classes}
The following conditions are equivalent:
\begin{enumerate}
\item $N$ is a finitely generated projective generator over $R$.
\item $\A_N(R)$ consists of all right $R$-modules.
\item $\B_N(R)$ consists of all left $R$-modules.
\end{enumerate}
\end{proposition}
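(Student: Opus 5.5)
The plan is to prove $(1)\Rightarrow(2)$ and $(1)\Rightarrow(3)$ by Morita-type evaluation isomorphisms. For the converses I will extract projectivity of $N$ from the Tor/Ext conditions and the generator property from the unit/counit isomorphisms, both via the trace ideal $T:=\sum_{f\in\Hom_R(N,R)} f(N)$. This is a two-sided ideal of $R$.

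\emph{$(1)\Rightarrow(2)$ and $(1)\Rightarrow(3)$.} If $N$ is a finitely generated projective generator, then the proof of \Cref{generatorex} shows that ${}_EN$ is finitely generated projective and that $\iota_N$ is an isomorphism.

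For a right $R$-module $M$:
\begin{itemize}
\item $\Tor^R_{>0}(M,N)=0$ because $N$ is flat over $R$.
\item $\Ext^{>0}_E(N,M\otimes_RN)=0$ because $N$ is projective over $E$.
\item Since ${}_EN$ is finitely generated projective, there is a natural isomorphism $\Hom_E(N,X)\cong \Hom_E(N,E)\otimes_E X$. Applying it with $X=M\otimes_RN$ identifies $\Hom_E(N,M\otimes_RN)$ with $M\otimes_R\Hom_E(N,N)\cong M\otimes_R R\cong M$. One checks that under this identification $\epsilon_M$ becomes $M\otimes_R\iota_N$, hence an isomorphism.
\end{itemize}
This is first verified for $N=E$ and then for summands of $E^n$. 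Thus $M\in\A_N(R)$.

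Dually, for a left $R$-module $M$:
\begin{itemize}
\item $\Ext_R^{>0}(N,M)=0$ because $N$ is projective over $R$.
\item $\Tor^E_{>0}(\Hom_R(N,M),N)=0$ because $N$ is projective over $E$.
\item The trace map is an isomorphism. Since $N$ is finitely generated projective over $E$, evaluation gives $\Hom_R(N,M)\otimes_EN\cong\Hom_R(\Hom_E(N,N),M)$, exactly as in the proof of \Cref{injectiveex} but now for arbitrary $M$. This identifies $\delta_M$ with $\Hom_R(\iota_N,M)$ followed by $\Hom_R(R,M)\cong M$.
\end{itemize}
Thus $M\in\B_N(R)$.

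\emph{$(2)\Rightarrow(1)$.} If every right module lies in $\A_N(R)$, then $\Tor^R_1(M,N)=0$ for all $M$, so $N$ is flat. Being finitely generated over the Noetherian ring $R$, it is finitely presented, hence projective. For the generator property, take $M=R/T$. By the dual basis lemma, $TN=N$, so $(R/T)\otimes_RN=N/TN=0$. Since $\epsilon_{R/T}$ is an isomorphism onto $\Hom_E(N,0)=0$, we get $R/T=0$, i.e.\ $T=R$. Then $1=\sum v_j(n_j)$ for finitely many $v_j\in\Hom_R(N,R)$, which exhibits $R$ as a summand of $N^{\oplus t}$.

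\emph{$(3)\Rightarrow(1)$.} If every left module lies in $\B_N(R)$, then $\Ext^1_R(N,-)=0$, so $N$ is projective. Now take the left module $M=R/T$. Any $f:N\to R/T$ lifts along $R\to R/T$ by projectivity of $N$. The lift has image inside $T$, so $f=0$, and hence $\Hom_R(N,R/T)=0$. The trace map $\delta_{R/T}$ is then an isomorphism $0\to R/T$, so $T=R$ and $N$ is a generator as before.

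The main point needing care is the trace-ideal step and the matching of $\epsilon_M$ and $\delta_M$ with $M\otimes\iota_N$ and $\Hom(\iota_N,M)$. The rest is routine.
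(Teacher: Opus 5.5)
Your proof is correct. The two converse implications follow the paper: you get flatness, respectively projectivity, of $N$ from the vanishing of $\Tor_1^R(-,N)$, respectively $\Ext^1_R(N,-)$, and then use the trace ideal $T$. For $(2)\Rightarrow(1)$ you use exactly the paper's device, namely that $\epsilon^N_{R/T}$ is an isomorphism onto $0$. For $(3)\Rightarrow(1)$ the paper is more direct. The image of $\delta^N_R\colon\Hom_R(N,R)\otimes_EN\to R$ is precisely $T$, so surjectivity of $\delta^N_R$ gives $T=R$ at once, without passing to $R/T$ or using the lifting argument. Your variant is nonetheless valid.

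The genuine difference is in $(1)\Rightarrow(2),(3)$. The paper invokes Morita theory from Auslander--Goldman (Theorem A.2 and Propositions A.3, A.4). These give that $N$ is a finitely generated projective generator over $E$ with $R\cong\End_E(N)$, and that the unit and counit are isomorphisms. You instead derive everything by hand from the proof of \Cref{generatorex}, together with two degree-zero evaluation isomorphisms. The first is $\Hom_E(N,X)\cong\Hom_E(N,E)\otimes_EX$. The second is $\theta_M$ from the proof of \Cref{injectiveex}, which indeed holds for arbitrary $M$ in degree zero, since injectivity of $I$ is only needed to pass to homology. Your route is self-contained and shows slightly more. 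Bijectivity of $\epsilon^N_M$ and $\delta^N_M$ for every $M$ uses only that ${}_EN$ is finitely generated projective and that $\iota_N$ is bijective; you never need $N$ to be a generator over $E$. Projectivity over $R$ enters only through the $\Tor^R$ and $\Ext_R$ vanishing conditions. The paper's citation is shorter and places the statement within the standard Morita framework.
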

\begin{proof}
Suppose $(2)$ holds. The vanishing of $\Tor_1^R(M,N)$ for every right $R$-module $M$ implies that $N$ is flat. Since $N$ is finitely generated and flat over the Noetherian ring $R$, it is projective. Let $T$ be its trace ideal, namely the two-sided ideal of $R$ consisting of finite sums of values $f(n)$ with $f\in\Hom_R(N,R)$ and $n\in N$. 

Since $N$ is finitely generated projective, choose $R$-linear maps
\[
N \xrightarrow{u} R^t \xrightarrow{v} N
\qquad\text{with}\qquad vu=\operatorname{id}_N.
\]
Let $e_1,\ldots,e_t$ be the standard basis of $R^t$, and write
$u(n)=(f_1(n),\ldots,f_t(n))$, where each
$f_j:N\to R$ is $R$-linear. By the definition of $T$, we have
$f_j(n)\in T$ for every $n\in N$. Hence
\[
n=v(u(n))=\sum_{j=1}^t f_j(n)v(e_j)\in TN.
\]
Thus $N\subseteq TN$, and the reverse inclusion is automatic.
Therefore $TN=N$.
Thus $(R/T)\otimes_RN=0$, and the isomorphism $\epsilon_{R/T}^N$ forces $R/T=0$. Writing $1$ as a finite sum of trace values gives a surjection from a finite direct sum of copies of $N$ onto $R$. This surjection splits, and thus $N$ is a generator.

Suppose $(3)$ holds. The vanishing of $\Ext_R^1(N,M)$ for every left $R$-module $M$ implies that $N$ is projective. Surjectivity of $\delta_R^N$ again writes $1$ as a finite sum of trace values, and shows that $N$ is a generator.

Suppose $(1)$ holds. Since $N$ is a generator, a surjection
$N^{\oplus t}\to R$ shows that its trace ideal is $R$.
By \cite[Theorem~A.2(c),(g) and Proposition~A.3]{AG60},
$N$ is a finitely generated projective generator over $E$,
and the homothety map $R\to\Hom_E(N,N)$ is an isomorphism.
Applying \cite[Proposition~A.4]{AG60} over $R$ and over $E$,
and using the tensor--Hom adjunctions, shows that the maps
\[
\epsilon_M^N:
M\longrightarrow\Hom_E(N,M\otimes_R N)
\quad\text{and}\quad
\delta_X^N:
\Hom_R(N,X)\otimes_E N\longrightarrow X
\]
are isomorphisms for every right $R$-module $M$ and every
left $R$-module $X$.

Since $N$ is projective over both $R$ and $E$, we also have the required Ext and Tor vanishing, proving
$(1)\Rightarrow(2),(3)$.
\end{proof}

\begin{prop}\label[prop]{bassclass-ses}
Suppose $0 \to A \to B \to C \to 0$ is a short exact sequence of left $R$-modules. If $A \in \B_N(R)$, then $B \in \B_N(R)$ if and only if $C \in \B_N(R)$.

\end{prop}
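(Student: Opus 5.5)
The plan is to dualize the proof of \Cref{auslanderresolving}. We apply $\Hom_R(N,-)$ to $0\to A\to B\to C\to 0$ and use $\Ext^{>0}_R(N,A)=0$. The long exact sequence then gives isomorphisms $\Ext^i_R(N,B)\cong\Ext^i_R(N,C)$ for all $i>0$. It also gives a short exact sequence of right $E$-modules
\[
0\to\Hom_R(N,A)\to\Hom_R(N,B)\to\Hom_R(N,C)\to0 .
\]
So condition $(1)$ of \Cref{def:bass} holds for $B$ if and only if it holds for $C$.

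Next we apply $-\otimes_EN$ to this short exact sequence and take the long exact sequence in $\Tor^E$. Since $\Tor^E_{>0}(\Hom_R(N,A),N)=0$, we obtain isomorphisms
\[
\Tor^E_i(\Hom_R(N,B),N)\cong\Tor^E_i(\Hom_R(N,C),N)\qquad\text{for } i>1,
\]
together with an exact sequence
\[
0\to\Tor^E_1(\Hom_R(N,B),N)\to\Tor^E_1(\Hom_R(N,C),N)\to\Hom_R(N,A)\otimes_EN\to\Hom_R(N,B)\otimes_EN\to\Hom_R(N,C)\otimes_EN\to0 .
\]
Naturality of the trace map $\delta$ gives a commutative diagram whose top row is the four-term right exact piece ending in $\Hom_R(N,C)\otimes_EN\to0$, whose bottom row is $0\to A\to B\to C\to 0$, and whose vertical maps are $\delta_A,\delta_B,\delta_C$.

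The key step, and the only real point, is that $\delta_A$ is an isomorphism. Composing, the map $\Hom_R(N,A)\otimes_EN\to\Hom_R(N,B)\otimes_EN$ followed by $\delta_B$ equals $\delta_A$ followed by the inclusion $A\hookrightarrow B$. This composite is injective, so the map $\Hom_R(N,A)\otimes_EN\to\Hom_R(N,B)\otimes_EN$ is injective. Hence the connecting map into $\Hom_R(N,A)\otimes_EN$ is zero, and we get $\Tor^E_1(\Hom_R(N,B),N)\cong\Tor^E_1(\Hom_R(N,C),N)$. This makes condition $(2)$ equivalent for $B$ and $C$.

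With injectivity established, the top row becomes a short exact sequence
\[
0\to\Hom_R(N,A)\otimes_EN\to\Hom_R(N,B)\otimes_EN\to\Hom_R(N,C)\otimes_EN\to0 .
\]
It maps to $0\to A\to B\to C\to0$ with $\delta_A$ an isomorphism. The five lemma (or the snake lemma) then shows that $\delta_B$ is an isomorphism if and only if $\delta_C$ is. This gives condition $(3)$ and finishes the proof.
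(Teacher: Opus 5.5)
Your proposal is correct and follows essentially the same route as the paper's proof. You apply $\Hom_R(N,-)$ to get the short exact sequence of $\Hom$'s, then take the $\Tor^E$ long exact sequence, use $\delta_A$ being an isomorphism to force injectivity of $\Hom_R(N,A)\otimes_EN\to\Hom_R(N,B)\otimes_EN$, and finish with the five lemma. You also spell out the injectivity step, via the commutative square with $A\hookrightarrow B$, more explicitly than the paper does.
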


\begin{proof}
Suppose $A \in \B_N(R)$. Then applying $\Hom_R(N,-)$ to the exact sequence $0 \to A \to B \to C \to 0$, we see from the associated long exact sequence in $\Ext$ that $\Ext^{i}_R(N,B) \cong \Ext^i_R(N,C)$ for all $i>0$ and that there is a short exact sequence of right $E$-modules 
\[0 \to \Hom_R(N,A) \to \Hom_R(N,B) \to \Hom_R(N,C) \to 0.\]
Applying $- \otimes_E N$ to this sequence and considering the induced long exact sequence in $\Tor$ we see that $\Tor^E_i(\Hom_R(N,B),N) \cong \Tor^E_i(\Hom_R(N,C),N)$ for all $i>1$ and that there is an exact sequence of the form

\[\begin{tikzpicture}[descr/.style={fill=white,inner sep=1.5pt}]
        \matrix (m) [
            matrix of math nodes,
            row sep=1em,
            column sep=1.8em,
            text height=1.5ex, text depth=0.25ex
        ]
        { \Hom_R(N,A) \otimes_E N & \Hom_R(N,B) \otimes_E N & \Hom_R(N,C) \otimes_E N & 0 \\
           0 & \Tor^E_1(\Hom_R(N,B),N) & \Tor^E_1(\Hom_R(N,C),N) & \mbox{} \\
        };
\path[overlay,->, font=\scriptsize,>=latex]
        (m-1-1) edge (m-1-2)
        (m-1-2) edge (m-1-3)
        (m-1-3) edge (m-1-4);
        \path[overlay,->, font=\scriptsize,>=latex]
        (m-2-3) edge[out=365,in=185] (m-1-1)
        (m-2-2) edge (m-2-3)
        (m-2-1) edge (m-2-2);   
\end{tikzpicture}\]

the top row of which fits into the commutative diagram
\[\begin{tikzcd}[cramped]
	& {\Hom_R(N,A) \otimes_E N} & {\Hom_R(N,B) \otimes_E N} & {\Hom_R(N,C) \otimes_E N} & 0 \\
	0 & A & B & C & 0.
	\arrow[from=1-2, to=1-3]
	\arrow["{\delta^N_A}"', from=1-2, to=2-2]
	\arrow[from=1-3, to=1-4]
	\arrow["{\delta^N_B}"', from=1-3, to=2-3]
	\arrow[from=1-4, to=1-5]
	\arrow["{\delta^N_C}"', from=1-4, to=2-4]
	\arrow[from=2-1, to=2-2]
	\arrow[from=2-2, to=2-3]
	\arrow[from=2-3, to=2-4]
	\arrow[from=2-4, to=2-5]
\end{tikzcd}\]
As $A \in \B_N(R)$, we have that $\delta^N_A$ is an isomorphism, and it follows the map $\Hom_R(N,A) \otimes_E N \to \Hom_R(N,B) \otimes_E N$ is injective, so $\Tor^E_1(\Hom_R(N,B),N) \cong \Tor^E_1(\Hom_R(N,C),N)$. Moreover, we have from the five lemma, that $\delta^N_B$ is an isomorphism if and only if $\delta^N_C$ is an isomorphism, completing the proof. 
\end{proof}

\begin{prop}\label[prop]{prp:Bass2out3}
Suppose the functor $- \otimes_E N$ detects zero objects. Then $\B_N(R)$ satisfies the two-out-of-three property. 
\end{prop}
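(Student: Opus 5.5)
By \Cref{bassclass-ses}, once $A\in\B_N(R)$ the conditions $B\in\B_N(R)$ and $C\in\B_N(R)$ are equivalent. So the only case left to prove is the dual of the one treated in \Cref{auslander2outof3}. Let $0\to A\to B\to C\to 0$ be a short exact sequence of left $R$-modules with $B,C\in\B_N(R)$; we must show $A\in\B_N(R)$. We may assume $N\neq 0$.

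First, apply $\Hom_R(N,-)$. The long exact Ext sequence gives $\Ext^{>1}_R(N,A)=0$ and an exact sequence of right $E$-modules
\[0\to \Hom_R(N,A)\to\Hom_R(N,B)\to\Hom_R(N,C)\to \Ext^1_R(N,A)\to 0.\]
Write $K$ for the image of $\Hom_R(N,B)\to\Hom_R(N,C)$, so that $0\to K\to\Hom_R(N,C)\to\Ext^1_R(N,A)\to0$ is exact. Apply $-\otimes_EN$ to this sequence. Right exactness and the trace maps give a commutative square: the composite
\[\Hom_R(N,B)\otimes_EN\to\Hom_R(N,C)\otimes_EN\xrightarrow{\delta_C}C\]
equals $\delta_B$ followed by the surjection $B\to C$, and is therefore surjective. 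Since $\Hom_R(N,B)\otimes_EN\to K\otimes_E N$ is surjective, the image of $K\otimes_EN\to\Hom_R(N,C)\otimes_EN$ is everything. Hence $\Ext^1_R(N,A)\otimes_EN=0$. This is the step where the zero-detection hypothesis enters: it forces $\Ext^1_R(N,A)=0$, and so condition (1) of \Cref{def:bass} holds for $A$. I expect this to be the main obstacle, namely setting up the right factorization so that the cokernel is tensored with $N$ and not hit by $\Hom_E(N,-)$. It is the mirror image of the use of \Cref{bothfunctorsfaithful} in \Cref{auslander2outof3}, but here the tensor form of detection is used directly.

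Now the sequence $0\to\Hom_R(N,A)\to\Hom_R(N,B)\to\Hom_R(N,C)\to0$ is exact. Applying $-\otimes_EN$ and using $\Tor^E_{>0}(\Hom_R(N,B),N)=0=\Tor^E_{>0}(\Hom_R(N,C),N)$, the long exact Tor sequence gives $\Tor^E_{>1}(\Hom_R(N,A),N)=0$. It also gives an exact sequence
\[0\to\Tor^E_1(\Hom_R(N,A),N)\to \Hom_R(N,A)\otimes_EN\to\Hom_R(N,B)\otimes_EN\to\Hom_R(N,C)\otimes_EN\to0.\]

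Finally, compare this sequence with $0\to A\to B\to C\to0$ via the trace maps. The map $\Hom_R(N,A)\otimes_EN\to\Hom_R(N,B)\otimes_EN$, followed by the isomorphism $\delta_B$, equals $\delta_A$ followed by the injection $A\to B$. Denote by $\kappa\colon\Tor^E_1(\Hom_R(N,A),N)\to\Hom_R(N,A)\otimes_EN$ the inclusion from the Tor sequence. Then $\delta_A\circ\kappa$ maps into the kernel of $A\to B$, which is zero, so $\delta_A\circ\kappa=0$. Also $\kappa$ is exactly the kernel of the map to $\Hom_R(N,B)\otimes_EN$. Now restrict the diagram to the short exact part
\[0\to \operatorname{im}\bigl(\Hom_R(N,A)\otimes_EN\bigr)\to\Hom_R(N,B)\otimes_EN\to\Hom_R(N,C)\otimes_EN\to0.\]
Since $\delta_B$ and $\delta_C$ are isomorphisms, the five lemma identifies this image with $A$ via $\delta_A$, so $\delta_A$ is surjective. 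The kernel of $\delta_A$ is then precisely $\Tor^E_1(\Hom_R(N,A),N)$. It remains to show this Tor vanishes, which gives the Tor condition and shows $\delta_A$ is an isomorphism, completing the proof.

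For this I plan the following argument. Set $T=\Tor^E_1(\Hom_R(N,A),N)$ and apply $\Hom_R(N,-)$ to the exact sequence $0\to T\to\Hom_R(N,A)\otimes_EN\xrightarrow{\delta_A}A\to0$. The composite $\Hom_R(N,A)\to\Hom_R(N,\Hom_R(N,A)\otimes_EN)\to\Hom_R(N,A)$, induced by $f\mapsto(x\mapsto f\otimes x)$ and then $\delta_A$, is the identity. The first map is surjective, because every element of $\Hom_R(N,A)\otimes_EN$ lifts through the sequence involving $B$ and $\delta_B$. Hence $\Hom_R(N,\delta_A)$ is injective, and so $\Hom_R(N,T)=0$. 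Since $T$ is a quotient of $\Hom_R(N,A)\otimes_EN$, tensor detection applied to an appropriate quotient then yields $T=0$. If this last step proves delicate, the fallback is to show $T\otimes$-vanishing via \Cref{bothfunctorsfaithful} using character duals.
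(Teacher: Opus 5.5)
Your treatment of $\Ext^1_R(N,A)$ is correct and matches the paper's argument. The paper tensors the right exact piece $\Hom_R(N,B)\to\Hom_R(N,C)\to\Ext^1_R(N,A)\to0$ directly rather than passing through the image $K$, but that is cosmetic. The gap is in the Tor step that follows. For the short exact sequence $0\to\Hom_R(N,A)\to\Hom_R(N,B)\to\Hom_R(N,C)\to0$ of right $E$-modules, the long exact sequence contains the segments
\[
\Tor^E_{i+1}(\Hom_R(N,C),N)\to\Tor^E_i(\Hom_R(N,A),N)\to\Tor^E_i(\Hom_R(N,B),N),
\]
and it ends with $\Tor^E_1(\Hom_R(N,C),N)\to\Hom_R(N,A)\otimes_EN\to\Hom_R(N,B)\otimes_EN$. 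So the term immediately to the left of $\Hom_R(N,A)\otimes_EN$ is $\Tor^E_1(\Hom_R(N,C),N)$, not $\Tor^E_1(\Hom_R(N,A),N)$. You carried over the shape of the sequence from \Cref{auslander2outof3}, where the unknown module is the cokernel; here it is the submodule.

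With the correct sequence everything is immediate. For every $i\geq1$ the group $\Tor^E_i(\Hom_R(N,A),N)$ sits between two zero groups, so condition $(2)$ of \Cref{def:bass} holds for $A$. Also $\Tor^E_1(\Hom_R(N,C),N)=0$ makes $\Hom_R(N,A)\otimes_EN\to\Hom_R(N,B)\otimes_EN$ injective. Your own comparison with $0\to A\to B\to C\to0$ then shows, by the five lemma, that $\delta_A$ is an isomorphism. This is exactly how the paper finishes.

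As written, however, your identification $\ker\delta_A=\Tor^E_1(\Hom_R(N,A),N)$ is false: the kernel is the image of $\Tor^E_1(\Hom_R(N,C),N)$, which is zero. The argument you plan for killing $T$ also fails on its own terms. It ends by deducing $T=0$ from $\Hom_R(N,T)=0$, but the hypothesis only says that $-\otimes_EN$ detects zero right $E$-modules. Equivalently, by \Cref{bothfunctorsfaithful}, $\Hom_E(N,-)$ detects zero left $E$-modules; nothing is said about $\Hom_R(N,-)$ on left $R$-modules. For instance, if $R$ is commutative local with $\depth R>0$ and $N=k$, then $E=k$, so $-\otimes_EN$ detects zero objects, yet $\Hom_R(k,R)=0$.

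Two further problems: the claimed surjectivity of $\Hom_R(N,A)\to\Hom_R(N,\Hom_R(N,A)\otimes_EN)$ is not justified, and $T$ changes from a submodule to a quotient of $\Hom_R(N,A)\otimes_EN$ midway. So the proposal does not establish the Tor condition or the injectivity of $\delta_A$. The repair is simply to read the long exact sequence correctly, after which the zero-detection hypothesis is used only once, to get $\Ext^1_R(N,A)=0$.
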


\begin{proof}
From \Cref{bassclass-ses}, it suffices to show that if there is an exact sequence $0\rightarrow A\rightarrow B\rightarrow C\rightarrow 0$ with $B,C \in \B_N(R)$, then $A \in \B_N(R)$. Applying $\Hom_R(N,-)$ to the exact sequence $0 \to A \to B \to C \to 0$, and considering the induced long exact sequence in $\Ext$, we have $\Ext^{>1}_R(N,A)=0$ and that there is an exact sequence
\[0 \to \Hom_R(N,A) \to \Hom_R(N,B) \to \Hom_R(N,C) \to \Ext^1_R(N,A) \to 0.\]
Applying $- \otimes_E N$ yields an exact sequence 
\[\Hom_R(N,B) \otimes_E N \to \Hom_R(N,C) \otimes_E N \to \Ext^1_R(N,A) \otimes_E N \to 0.\]
On the other hand, there is a commutative diagram:
\[\begin{tikzcd}[cramped]
	{\Hom_R(N,B) \otimes_E N} & {\Hom_R(N,C) \otimes_E N} \\
	B & C
	\arrow[from=1-1, to=1-2]
	\arrow["{\delta^N_B}"', from=1-1, to=2-1]
	\arrow["{\delta^N_C}"', from=1-2, to=2-2]
	\arrow[from=2-1, to=2-2]
\end{tikzcd}\]
whose vertical arrows are isomorphisms, since $B,C \in \B_N(R)$, and whose bottom arrow is a surjection. In particular, we have that the map $\Hom_R(N,B) \otimes_E N \to \Hom_R(N,C) \otimes_E N$ is surjective, so $\Ext^1_R(N,A) \otimes_E N=0$. As the functor $- \otimes_E N$ detects zero objects, it follows that $\Ext^1_R(N,A)=0$. 

Thus there is a short exact sequence
\[0 \to \Hom_R(N,A) \to \Hom_R(N,B) \to \Hom_R(N,C) \to 0.\]
Applying $-\otimes_E N$ to this sequence and considering the induced long exact sequence in $\Tor$, we see that $\Tor^E_{>0}(\Hom_R(N,A),N)=0$ and that there is a short exact sequence fitting as the top row of the commutative diagram:
\[\begin{tikzcd}[cramped]
	0 & {\Hom_R(N,A) \otimes_E N} & {\Hom_R(N,B) \otimes_E N} & {\Hom_R(N,C) \otimes_E N} & 0 \\
	0 & A & B & C & 0.
	\arrow[from=1-1, to=1-2]
	\arrow[from=1-2, to=1-3]
	\arrow["{\delta^N_A}"', from=1-2, to=2-2]
	\arrow[from=1-3, to=1-4]
	\arrow["{\delta^N_B}"', from=1-3, to=2-3]
	\arrow[from=1-4, to=1-5]
	\arrow["{\delta^N_C}"', from=1-4, to=2-4]
	\arrow[from=2-1, to=2-2]
	\arrow[from=2-2, to=2-3]
	\arrow[from=2-3, to=2-4]
	\arrow[from=2-4, to=2-5]
\end{tikzcd}\]
As $\delta^N_B$ and $\delta^N_C$ are isomorphisms, it follows that $\delta^N_A$ is an isomorphism, so $A \in \B_N(R)$, as claimed.    
\end{proof}

\begin{corollary}\label[corollary]{cor:exactSeqBass}
Suppose the functor $- \otimes_E N$ detects zero objects and consider the following exact sequence of left $R$-modules
\[
0\rightarrow M_1\rightarrow M_2\rightarrow\cdots\rightarrow M_n\rightarrow0.
\]
If $M_i\in\mathcal{B}_N(R)$ for all $i\neq j$, then $M_j\in \mathcal{B}_N(R)$.
\end{corollary}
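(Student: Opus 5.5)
The plan is to induct on the length $n$ of the exact sequence, in the same way \Cref{cor:exactSeq} follows from \Cref{auslander2outof3}. Here the relevant inputs are the Bass-class statements: \Cref{prp:Bass2out3} gives the full two-out-of-three property under the zero-detection hypothesis, and \Cref{bassclass-ses} is the unconditional half. For $n\le 2$ there is nothing to prove beyond noting that an isomorphism of modules preserves membership in $\B_N(R)$, since all three defining conditions are invariant under isomorphism, and that the zero module lies in $\B_N(R)$. For $n=3$ the statement is exactly \Cref{prp:Bass2out3}.

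For the inductive step with $n\ge 4$, I would split the long sequence at an interior image. Set $K=\operatorname{im}(M_{n-2}\to M_{n-1})=\ker(M_{n-1}\to M_n)$, which gives two exact sequences:
\[
0\to M_1\to\cdots\to M_{n-2}\to K\to 0
\qquad\text{and}\qquad
0\to K\to M_{n-1}\to M_n\to 0.
\]
There are two cases. If $j\in\{n-1,n\}$, then $M_1,\dots,M_{n-2}$ all lie in $\B_N(R)$. The induction hypothesis applied to the first sequence (of length $n-1$, with the unknown term placed at $K$) gives $K\in\B_N(R)$. Then \Cref{prp:Bass2out3} applied to the short sequence yields $M_j\in\B_N(R)$. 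If instead $j\le n-2$, then $M_{n-1},M_n\in\B_N(R)$. The short sequence and \Cref{prp:Bass2out3} give $K\in\B_N(R)$, and the induction hypothesis on the first sequence gives $M_j\in\B_N(R)$.

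The only point that needs any care is setting up the induction hypothesis in the form ``all but one term of an exact sequence of length $n-1$''. This is what allows the unknown term to be $K$ in one case and $M_j$ in the other. I expect no genuine obstacle, because zero-detection is used only through \Cref{prp:Bass2out3}, and that hypothesis is assumed here.
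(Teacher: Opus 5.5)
Your proof is correct and is essentially the paper's argument. The paper just says the result follows by induction on $n$ from \Cref{bassclass-ses} and \Cref{prp:Bass2out3}, and your splitting at $K=\ker(M_{n-1}\to M_n)$, with the two cases $j\ge n-1$ and $j\le n-2$, spells out exactly that induction.
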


\begin{proof}
By induction on $n$, using \Cref{bassclass-ses} and \Cref{prp:Bass2out3}. 
\end{proof}

\begin{corollary}\label[corollary]{cor:Bassinjdim}
Suppose the functor $- \otimes_E N$ detects zero objects. The following conditions are equivalent:
\begin{enumerate}
\item The $R$-module $N$ is End-semidualizing.
\item The class $\mathcal{B}_N(R)$ contains every left $R$-module of finite injective dimension.
\end{enumerate}
\end{corollary}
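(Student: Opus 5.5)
The plan mirrors the proof of \Cref{cor:auslanderFlatDim}, with \Cref{injectiveex} in place of \Cref{projectiveex} and \Cref{cor:exactSeqBass} in place of \Cref{cor:exactSeq}.

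For $(2)\Rightarrow(1)$: every injective left $R$-module has injective dimension zero. Hypothesis $(2)$ therefore places every injective left $R$-module in $\B_N(R)$. \Cref{injectiveex} then gives that $N$ is End-semidualizing. This direction does not use the zero-detection hypothesis.

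For $(1)\Rightarrow(2)$: let $M$ be a left $R$-module with $\id_R M=n<\infty$ and choose an injective resolution
\[
0\to M\to I^0\to I^1\to\cdots\to I^n\to 0 .
\]
By \Cref{injectiveex}, each $I^j$ lies in $\B_N(R)$, since $N$ is End-semidualizing. Because $-\otimes_EN$ detects zero objects, \Cref{cor:exactSeqBass} applies to this exact sequence with $j=1$ (the position of $M$). This yields $M\in\B_N(R)$.

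If one prefers to avoid the general corollary, one can induct on $n$ directly using \Cref{prp:Bass2out3}. Split off $0\to M\to I^0\to C\to 0$, where $C$ has injective dimension $n-1$. Then $C\in\B_N(R)$ by induction, and $I^0\in\B_N(R)$, so the two-out-of-three property gives $M\in\B_N(R)$. This is exactly the case (kernel from middle and cokernel) where the zero-detection hypothesis is needed. \Cref{bassclass-ses} alone only handles the case where $A$ is known to lie in $\B_N(R)$.

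There is no real obstacle here; the only point to check is that the induction in \Cref{cor:exactSeqBass} legitimately handles the leftmost position. It does, via the reduction just described.
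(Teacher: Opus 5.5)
Your proposal is correct and matches the paper's proof. The paper likewise combines Proposition~\ref{injectiveex} with Corollary~\ref{cor:exactSeqBass} applied to a finite injective resolution, and your direct induction via Proposition~\ref{prp:Bass2out3} is just that corollary unpacked.
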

\begin{proof}
It follows immediately from \Cref{cor:exactSeqBass,injectiveex}.
\end{proof}

The next result applies the previous results of this section to the Artin algebra setting:

\begin{proposition}\label[proposition]{artinbassclass}
Suppose $R$ is an Artin algebra and that $N$ is an indecomposable $R$-module. Then $\B_N(R)$ satisfies the $2$-out-$3$ property. Moreover, $N$ is End-semidualizing if and only if $\B_N(R)$ contains every left $R$-module of finite injective dimension.
\end{proposition}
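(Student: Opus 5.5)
The plan mirrors the proof of \Cref{artinauslanderclass}, with the Bass-class results of this section in place of the Auslander-class ones. First, since $R$ is an Artin algebra and $N$ is an indecomposable finitely generated left $R$-module, \cite[Theorem II.2.2(a)]{AR97} shows that $E=\End_R(N)$ is a local ring. (This argument needs $N\neq 0$; if $N=0$ then $\iota_N$ maps to the zero ring and is not injective unless $R=0$, so this degenerate case may be set aside.) The second part of \Cref{Homfinlength} then says that both $-\otimes_EN$ and $\Hom_E(N,-)$ detect zero modules.

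With zero-detection for $-\otimes_EN$ in hand, \Cref{prp:Bass2out3} gives at once that $\B_N(R)$ has the two-out-of-three property. This proves the first assertion.

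For the equivalence, I would invoke \Cref{cor:Bassinjdim} directly, since its hypothesis is exactly the zero-detection just verified. For the reader's convenience I would recall the two directions.
\begin{enumerate}
\item If $N$ is End-semidualizing, \Cref{injectiveex} puts every injective left $R$-module in $\B_N(R)$. A module of finite injective dimension has a finite injective resolution. Applying \Cref{cor:exactSeqBass} to the exact sequence formed by the module and its resolution then places the module in $\B_N(R)$.
\item Conversely, if $\B_N(R)$ contains every module of finite injective dimension, it contains all injectives. \Cref{injectiveex} then shows that $N$ is End-semidualizing.
\end{enumerate}

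I do not expect a real obstacle. The only point needing care is the locality of $E$ and the resulting zero-detection, which is already packaged in \Cref{Homfinlength}.
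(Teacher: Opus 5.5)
Your proposal is correct and is the paper's own argument: locality of $E$ from \cite[Theorem II.2.2(a)]{AR97}, zero-detection via \Cref{Homfinlength}, and then \Cref{prp:Bass2out3} and \Cref{cor:Bassinjdim}. Your aside about $N=0$ is unnecessary, since indecomposable modules are nonzero by convention.
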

\begin{proof}
It follows from \cite[Theorem II.2.2(a)]{AR97} that $E$ is local, and then \Cref{Homfinlength} implies that the functor $- \otimes_E N$ detects zero objects. The result then follows immediately from \Cref{prp:Bass2out3}, and \Cref{cor:Bassinjdim}.
\end{proof}

\begin{example}\label[example]{artintwooutofthreefails}
The two-out-of-three property can fail even over an Artin algebra.
Let $k$ be a field and let
\[
 R=\left\{\begin{pmatrix}a&b\\0&c\end{pmatrix}:a,b,c\in k\right\}.
\]
Write $e_1$ for the matrix with $a=1,b=c=0$, $e_2$ for the matrix $c=1,a=b=0$, and $u$ for the matrix $a=c=0,b=1$. Since $ue_2=u$, we can define the quotient $S=Re_2/ku$, where we have used $ku=Ru$. We also set $N=R\oplus S$. By
Example \ref{generatorex}, $N$ is $\End$-semidualizing. There is
an exact sequence of right $R$-modules
\[
 0\longrightarrow e_2R\xrightarrow{\alpha}e_1R
 \longrightarrow T\longrightarrow0,
 \qquad \alpha(e_2)=u,\quad T=e_1R/ku.
\]
Both $e_2R$ and $e_1R$ are projective since $R=e_1R\oplus e_2R$ and belong to $\A_N(R)$ by \Cref{projectiveex}.
However, as $k$-vector spaces one has $Re_2\cong ku\oplus ke_2$, and since $e_2^2=e_2$ and $e_1e_2=0$, it follows that $e_2S\cong k$ and $e_1S=0$, so tensoring the sequence
with $S$ gives
\[
\Tor_1^R(T,S)\cong k,
\]
therefore $\Tor_1^R(T,N)\cong k$. Thus $T\notin\A_N(R)$.

The same example gives a failure for the Bass class. Let
$D(-)=\Hom_k(-,k)$. Dualizing gives an exact sequence
\[
 0\longrightarrow D(T)\longrightarrow D(e_1R)
 \longrightarrow D(e_2R)\longrightarrow0.
\]
The last two modules are injective left $R$-modules and hence
belong to $\B_N(R)$, whereas
$\Ext_R^1(N,D(T))\cong D(\Tor_1^R(T,N))\ne0$.
\end{example}

\section{Foxby Equivalence}\label{sec:foxby}

When $N$ is faithfully balanced, we identify $\End_E(N)$ with $R$.
The notation $\B_N(E)$ then means the class defined in
Definition~\ref{def:bass} with the roles of $R$ and $E$ interchanged.
Thus its objects are left $E$-modules $X$ satisfying
$\Ext_E^{>0}(N,X)=0$, $\Tor_{>0}^R(\Hom_E(N,X),N)=0$, and
the evaluation isomorphism $\Hom_E(N,X)\otimes_R N\cong X$.
This use of the definition does not require $E$ to be Noetherian.

\begin{lem}\label[lem]{splitting}
We have the following:
\begin{enumerate}
\item[$(1)$] For any right $R$-module $M$, the map
$\delta^N_{M\otimes_RN}$ is a split surjection of left
$E$-modules, with section $\epsilon_M^N\otimes_R\id_N$.
\item[$(2)$] For any left $E$-module $M$, the map
$\epsilon^N_{\Hom_E(N,M)}$ is a split injection of right
$R$-modules, with retraction $\Hom_E(N,\delta_M^N)$.
\end{enumerate}
\end{lem}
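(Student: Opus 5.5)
This is the standard triangle-identity argument for the adjunction between $-\otimes_R N$ (right $R$-modules to left $E$-modules) and $\Hom_E(N,-)$, whose unit is $\epsilon^N$ and whose counit is $\delta^N$. In both parts it suffices to check that the stated composite is the identity on generators; no hypotheses on $N$ (End-semidualizing or otherwise) are needed. The first step is to make the roles precise. For a right $R$-module $M$, the module $M\otimes_RN$ is a left $E$-module. Here $\delta^N_{M\otimes_RN}$ means the trace map with $R$ and $E$ interchanged,
\[
\Hom_E(N,M\otimes_RN)\otimes_R N\longrightarrow M\otimes_RN,\qquad h\otimes y\mapsto h(y),
\]
exactly as in the convention just introduced for $\B_N(E)$. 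Dually, for a left $E$-module $M$, the map $\epsilon^N_{\Hom_E(N,M)}$ is the unit with roles interchanged, $\Hom_E(N,M)\to\Hom_R(N,\Hom_E(N,M)\otimes_RN)$, while $\delta^N_M:\Hom_E(N,M)\otimes_RN\to M$ is the evaluation.

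For $(1)$, the first thing to verify is that $\epsilon_M^N\otimes_R\id_N: M\otimes_RN\to\Hom_E(N,M\otimes_RN)\otimes_RN$ is $E$-linear. This holds because the $E$-action on the target is through the $N$ factor, and $f\in E$ acts on both sides only on the $N$-coordinate. Then compute on a generator $x\otimes y$:
\[
\delta^N_{M\otimes_RN}\bigl((\epsilon_M^N\otimes\id_N)(x\otimes y)\bigr)=\delta^N_{M\otimes_RN}(f_x\otimes y)=f_x(y)=x\otimes y.
\]
So the composite is the identity, and $\delta$ is a split surjection with the stated section.

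For $(2)$, let $h\in\Hom_E(N,M)$. Its image under $\epsilon^N_{\Hom_E(N,M)}$ is the map $y\mapsto h\otimes y$. Applying $\Hom_E(N,\delta_M^N)$ then gives $y\mapsto\delta^N_M(h\otimes y)=h(y)$, which is $h$ itself. One should check that $\Hom_E(N,\delta_M^N)$ makes sense: this needs $\delta_M^N$ to be $E$-linear, which holds because the $E$-action on $\Hom_E(N,M)\otimes_RN$ is via $M$ (postcomposition), matching the action on $M$. One should also check that it is right $R$-linear, which is clear. The composite is the identity, so $\epsilon$ is a split injection with the stated retraction.

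The only real obstacle is bookkeeping: keeping straight which ring acts on which side, so that each map is a morphism of the claimed type. This includes the well-definedness of tensoring over $R$ using that the $R$- and $E$-actions on $N$ commute, for instance $f_{xr}=f_x\circ(r\cdot-)$. Once the module structures are fixed, both identities are one-line computations on generators.
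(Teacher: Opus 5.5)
Your proposal is correct and is essentially the paper's proof: both parts are the two triangle identities for the adjunction $-\otimes_RN\dashv\Hom_E(N,-)$, checked on elements exactly as the paper does. Two bookkeeping slips should be fixed:
\begin{enumerate}
\item[$(a)$] In $(2)$, the map $\epsilon^N_{\Hom_E(N,M)}$ is the ordinary unit $\Hom_E(N,M)\to\Hom_E(N,\Hom_E(N,M)\otimes_RN)$, not a ``roles interchanged'' unit into $\Hom_R(\ldots)$. That target is undefined for noncommutative $R$, and $\Hom_E(N,\delta^N_M)$ could not act on it.
\item[$(b)$] The $E$-action on $\Hom_E(N,M)\otimes_RN$ is through the $N$ factor, as you correctly said in $(1)$, not by postcomposition. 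Postcomposition does not preserve $E$-linearity for noncentral elements of $E$. So $E$-linearity of $\delta^N_M$ follows from $h(fy)=f\,h(y)$.
\end{enumerate}
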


\begin{proof}
$(1)$ follows as, for any elementary tensor
$m\otimes n\in M\otimes_RN$, we have
\[
\begin{aligned}
\delta^N_{M\otimes_RN}
(\epsilon_M^N\otimes_R\id_N)(m\otimes n)
&=\delta^N_{M\otimes_RN}(\epsilon_M^N(m)\otimes n)\\
&=\epsilon_M^N(m)(n)=m\otimes n.
\end{aligned}
\]

For $(2)$, note that
$\epsilon^N_{\Hom_E(N,M)}(g)=f_g$, where
$f_g:N\to\Hom_E(N,M)\otimes_RN$ is given by
$f_g(y)=g\otimes y$. So
$\Hom_E(N,\delta_M^N)(f_g)=\delta_M^N\circ f_g$.
But then for any $y\in N$, we have
\[
\delta_M^N(f_g(y))=\delta_M^N(g\otimes y)=g(y),
\]
and it follows that $\delta_M^N\circ f_g=g$,
giving the claim.
\end{proof}

\begin{theorem}\label[theorem]{auslanderbassclassforward}
Suppose $N$ is a faithfully balanced left $R$-module. Then we have the following:
\begin{enumerate}
\item[$(1)$] If a right $R$-module $M$ is in $\mathcal{A}_N(R)$, then $M \otimes_R N$ is in $\mathcal{B}_N(E)$. 
\item[$(2)$] If a left $E$-module $M$ is in $\mathcal{B}_N(E)$, then $\Hom_E(N,M)$ is in $\A_N(R)$.
\item[$(3)$] The functors $- \otimes_R N:\A_N(R) \to \mathcal{B}_N(E)$ and $\Hom_E(N,-):\mathcal{B}_N(E) \to \A_N(R)$ are equivalences. 
\end{enumerate}
\end{theorem}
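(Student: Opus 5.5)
For $(1)$, let $M\in\A_N(R)$ and set $X=M\otimes_RN$, a left $E$-module. Conditions (1)--(3) of $\B_N(E)$ for $X$ read: $\Ext_E^{>0}(N,X)=0$, $\Tor^R_{>0}(\Hom_E(N,X),N)=0$, and $\delta^N_X:\Hom_E(N,X)\otimes_RN\to X$ is an isomorphism. Here $\End_E(N)$ is identified with $R$ via $\iota_N$, which is where faithful balance is used.

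The first condition is exactly condition (2) of $M\in\A_N(R)$. Since $\epsilon_M$ is an isomorphism $M\cong\Hom_E(N,X)$ of right $R$-modules, we get $\Tor^R_{>0}(\Hom_E(N,X),N)\cong\Tor^R_{>0}(M,N)=0$. For the third, \Cref{splitting}(1) shows that $\delta^N_X$ is a split surjection with section $\epsilon_M\otimes_R\id_N$. This section is an isomorphism because $\epsilon_M$ is, so $\delta^N_X$ is an isomorphism, being a one-sided inverse of an isomorphism.

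For $(2)$ the argument is dual. Let $X\in\B_N(E)$ and $M=\Hom_E(N,X)$. Condition (1) of $\A_N(R)$ is condition (2) of $\B_N(E)$. The map $\delta_X:M\otimes_RN\to X$ is an isomorphism of left $E$-modules, so $\Ext_E^{>0}(N,M\otimes_RN)\cong\Ext_E^{>0}(N,X)=0$. By \Cref{splitting}(2), $\epsilon_M$ is a split injection with retraction $\Hom_E(N,\delta_X)$. That retraction is an isomorphism, so $\epsilon_M$ is one as well.

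For $(3)$, parts $(1)$ and $(2)$ show that the two functors land in the stated classes. The unit $\epsilon_M:M\to\Hom_E(N,M\otimes_RN)$ is natural in $M$ and an isomorphism on $\A_N(R)$. The counit $\delta_X:\Hom_E(N,X)\otimes_RN\to X$ is natural in $X$ and an isomorphism on $\B_N(E)$. Hence the two functors are quasi-inverse equivalences.

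The only delicate point is bookkeeping. One must check that the bimodule structures match: the right $R$-structure on $\Hom_E(N,X)$ must agree with the one used in $\B_N(E)$ after identifying $\End_E(N)\cong R$ via $\iota_N$. One must also check that the maps $\epsilon$ and $\delta$ are linear over the correct rings ($R$-linear and $E$-linear respectively). I expect this compatibility check to be the main obstacle, and a routine one.
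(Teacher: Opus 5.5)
Your proposal is correct and follows the paper's proof essentially step for step: the Ext and Tor conditions are transferred through the isomorphisms $\epsilon_M$ and $\delta_X$, \Cref{splitting} upgrades the section (resp.\ retraction), which is an isomorphism, to show $\delta_X$ (resp.\ $\epsilon_M$) is one, and the natural isomorphisms $\epsilon$ and $\delta$ make the two functors quasi-inverse. The bookkeeping you flag is indeed routine, since precomposition with $\iota_N(r)$ on $\Hom_E(N,X)$ is exactly the right $R$-action $(hr)(n)=h(rn)$.
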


\begin{proof}
We first prove $(1)$. Let $M\in\A_N(R)$. We first notice that $\Ext_E^{>0}(N,M\otimes_RN)=0$ since $M$ is in $\A_N(R)$, therefore $M\otimes_RN$ satisfies \Cref{def:bass}(1). Since $N$ is faithfully balanced $\mathrm{End}_E(N)=R$, therefore to show that $M\otimes_RN$ satisfies \Cref{def:bass}(2) we need to show that 
\[\Tor_{>0}^R(\Hom_E(N,M\otimes_RN),N)=0.\]
Since $M\in\A_N(R)$ it follows that $\Hom_E(N,M\otimes_RN)\cong M$ as $R$-modules and the desired vanishing now follows again since $M\in\A_N(R)$. Finally we need to show that the trace map
\[
\delta^N_{M\otimes_RN}:\Hom_E(N,M\otimes_RN)\otimes_RN\rightarrow M\otimes_RN
\]
is an isomorphism. But as $M \in \A_N(R)$, we have that $\epsilon^N_M$ is an isomorphism, and then so is $\epsilon^N_M \otimes \id_N$. By \Cref{splitting}, $\epsilon^N_M \otimes \id_N$ is a section for $\delta^N_{M \otimes_R N}$, and it follows that $\delta^N_{M \otimes_R N}$ is an isomorphism.

We now show $(2)$. Let $M \in \B_N(E)$. That $\Tor^R_{>0}(\Hom_E(N,M),N)=0$ follows from the definition of the Bass class and the fact that $R \cong \End_E(N)$. We moreover have that $\Ext^{>0}_E(N,M) \cong \Ext^{>0}_E(N,\Hom_E(N,M) \otimes_R N)=0$.
To see that $\epsilon^N_{\Hom_E(N,M)}$ is an isomorphism, we have that $\delta^N_M$ is an isomorphism as $M \in \B_N(E)$, and then so is $\Hom_E(N,\delta^N_M)$ which is a retraction for $\epsilon^N_{\Hom_E(N,M)}$ by \Cref{splitting}. It follows that $\epsilon^N_{\Hom_E(N,M)}$ is an isomorphism, so $\Hom_E(N,M) \in \A_N(R)$.

For $(3)$, the maps $\epsilon_M$ induce an isomorphism of functors $\id_{\A_N(R)} \cong \Hom_E(N,- \otimes_R N)$, while the maps $\delta^N_M$ induce an isomorphism of functors $\id_{\B_N(E)} \cong \Hom_E(N,-) \otimes_R N$, so we have the desired equivalences.
\end{proof}

\begin{theorem}\label[theorem]{foxbyrecognition}
Suppose $N$ is a faithfully balanced left $R$-module. 
\begin{enumerate}
\item[$(1)$] If the functor $- \otimes_R N$ detects zero objects, then a right $R$-module $M$ is in $\mathcal{A}_N(R)$ if and only if $M \otimes_R N$ is in $\mathcal{B}_N(E)$. 
\item[$(2)$] If the functor $- \otimes_E N$ detects zero objects, then a left $E$-module $M$ is in $\mathcal{B}_N(E)$ if and only if $\Hom_E(N,M)$ is in $\A_N(R)$. 
\end{enumerate}

\end{theorem}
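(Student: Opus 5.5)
The forward implications are already \Cref{auslanderbassclassforward}(1) and (2), so only the converses need proof. The key tool is \Cref{splitting}, which turns the single unit/counit isomorphism we do know into the isomorphism we need. The zero-detection hypothesis is used to kill the complementary summand of a split map.

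For $(1)$, suppose $M\otimes_RN\in\B_N(E)$. Definition~\ref{def:bass}(1) for $\B_N(E)$ gives $\Ext_E^{>0}(N,M\otimes_RN)=0$, which is condition (2) of \Cref{def:auslander}. Condition (3) also says the trace map $\delta^N_{M\otimes_RN}$ is an isomorphism. By \Cref{splitting}(1), $\epsilon_M^N\otimes_R\id_N$ is a section of $\delta^N_{M\otimes_RN}$, hence is itself an isomorphism.

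To deduce that $\epsilon_M^N$ is an isomorphism, set $H=\Hom_E(N,M\otimes_RN)$, $K=\ker\epsilon_M^N$ and $C=\coker\epsilon_M^N$. Right exactness of $-\otimes_RN$ gives $C\otimes_RN=\coker(\epsilon_M^N\otimes\id_N)=0$, so $C=0$ by zero detection. We then have a short exact sequence $0\to K\to M\to H\to 0$. Since $H\otimes_RN\cong\Hom_E(N,M\otimes_RN)\otimes_RN$ and $\Tor^R_{>0}(H,N)=0$ by condition (2) of membership of $M\otimes_RN$ in $\B_N(E)$, the long exact Tor sequence gives $0\to K\otimes_RN\to M\otimes_RN\to H\otimes_RN\to0$. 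The last map is the isomorphism $\epsilon_M\otimes\id_N$, so $K\otimes_RN=0$ and hence $K=0$. Thus $\epsilon_M^N$ is an isomorphism. Finally, $M\cong H$ and $\Tor^R_{>0}(H,N)=0$ give $\Tor^R_{>0}(M,N)=0$, so $M\in\A_N(R)$.

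Part $(2)$ is the dual argument. Suppose $\Hom_E(N,M)\in\A_N(R)$. Then $\epsilon^N_{\Hom_E(N,M)}$ is an isomorphism, and by \Cref{splitting}(2) its retraction $\Hom_E(N,\delta_M^N)$ is an isomorphism as well. Conditions (1) and (2) for $\B_N(E)$ are needed next: $\Tor^R_{>0}(\Hom_E(N,M),N)=0$ is part of the Auslander hypothesis. Set $X=\Hom_E(N,M)\otimes_RN$; the Auslander condition gives $\Ext_E^{>0}(N,X)=0$.

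It remains to show that $\delta_M^N\colon X\to M$ is an isomorphism and that $\Ext_E^{>0}(N,M)=0$. Let $K=\ker\delta_M^N$ and $C=\coker\delta_M^N$. Left exactness of $\Hom_E(N,-)$ together with injectivity of $\Hom_E(N,\delta_M^N)$ gives $\Hom_E(N,K)=0$, so $K=0$ by \Cref{bothfunctorsfaithful}. Then from $0\to X\to M\to C\to0$ and $\Ext^1_E(N,X)=0$, the sequence $0\to\Hom_E(N,X)\to\Hom_E(N,M)\to\Hom_E(N,C)\to0$ is exact. Its first map is the isomorphism $\Hom_E(N,\delta_M^N)$, so $\Hom_E(N,C)=0$ and hence $C=0$. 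Therefore $M\cong X$, and $\Ext^{>0}_E(N,M)=0$ follows.

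The main obstacle is the passage from ``$\epsilon\otimes N$ is an isomorphism'' to ``$\epsilon$ is an isomorphism''. The cokernel is handled by right exactness, but the kernel requires the Tor vanishing for $H$ so that tensoring preserves injectivity. In part (2), the dual role is played by the vanishing of $\Ext^1_E(N,X)$.
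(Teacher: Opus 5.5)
Your proof is correct and follows essentially the same route as the paper. In both parts you use \Cref{splitting} to transfer the known isomorphism to $\epsilon_M^N\otimes_R\id_N$ (respectively $\Hom_E(N,\delta_M^N)$). You then kill the cokernel and kernel of $\epsilon_M^N$ (respectively $\delta_M^N$) using zero detection together with the vanishing of $\Tor^R_1(\Hom_E(N,M\otimes_RN),N)$ (respectively $\Ext^1_E(N,\Hom_E(N,M)\otimes_RN)$), exactly as the paper does.
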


\begin{proof}
The forward directions of (1) and (2) follow immediately from \Cref{auslanderbassclassforward}. For the converse in $(1)$, suppose $M \otimes_R N \in \mathcal{B}_N(E)$. Then $\delta^N_{M \otimes_R N}$ is an isomorphism, but from \Cref{splitting}, it is a split surjection with section $\epsilon^N_M \otimes \id_N$. In particular, $\epsilon^N_M \otimes \id_N$ is an isomorphism. But we have $\coker(\epsilon^N_M \otimes \id_N) \cong \coker(\epsilon^N_M) \otimes_R N=0$, and it follows since $- \otimes_R N$ detects zero objects that $\epsilon^N_M$ is surjective. So there is a short exact sequence of right $R$-modules
\[0 \rightarrow \ker(\epsilon^N_M) \rightarrow M \xrightarrow{\epsilon^N_M} \Hom_E(N,M \otimes_R N) \rightarrow 0.\]
But as $M \otimes_R N \in \B_N(E)$, we have in particular that $\Tor^R_1(\Hom_E(N,M \otimes_R N),N)=0$, so applying $- \otimes_R N$ to the sequence above reveals that $\ker(\epsilon^N_M \otimes \id_N) \cong \ker(\epsilon^N_M) \otimes_R N=0$. It follows since $- \otimes_R N$ detects zero objects that $\ker(\epsilon^N_M)=0$ so that $\epsilon^N_M$ is an isomorphism. But then for all $i>0$ we have $\Tor^R_i(\Hom_E(N,M \otimes_R N),N) \cong \Tor^R_i(M,N)=0$, while the condition $\Ext^{>0}_E(N,M \otimes_R N)=0$ follows at once from the definition of Bass class. Therefore $M \in \A_N(R)$.

We now prove the converse in $(2)$. Suppose $\Hom_E(N,M) \in \A_N(R)$. Then $\epsilon^N_{\Hom_E(N,M)}$ is an isomorphism, and it follows from \Cref{splitting} that so is its retraction $\Hom_E(N,\delta^N_M)$. But $\ker(\Hom_E(N,\delta^N_M)) \cong \Hom_E(N,\ker(\delta^N_M))=0$, and it follows from \Cref{bothfunctorsfaithful} that $\delta^N_M$ is injective. There is thus a short exact sequence 
\[0 \rightarrow \Hom_E(N,M) \otimes_R N \xrightarrow{\delta^N_M} M \rightarrow \coker(\delta^N_M) \rightarrow 0\]
As $\Hom_E(N,M) \in \A_N(R)$, we in particular have that $\Ext^1_E(N,\Hom_E(N,M) \otimes_R N)=0$, so applying $\Hom_E(N,-)$ to this sequence reveals that $\coker(\Hom_E(N,\delta^N_M)) \cong \Hom_E(N,\coker(\delta^N_M))=0$. It follows from \Cref{bothfunctorsfaithful} that $\coker(\delta^N_M)=0$, so $\delta^N_M$ is an isomorphism. Then for all $i>0$, we have $\Ext^i_E(N,\Hom_E(N,M) \otimes_R N) \cong \Ext^i_E(N,M)=0$. That $\Tor^R_{>0}(\Hom_E(N,M),N)=0$ follows at once from the definition of Auslander class, so $M \in \B_N(E)$.
\end{proof}

\begin{proposition}\label[proposition]{character-foxby-duality}
Write $(-)^+=\Hom_{\mathbb Z}(-,\mathbb Q/\mathbb Z)$.
For every right $R$-module $M$ and
every left $R$-module $L$ one has
\[
M\in\A_N(R)\quad\Longleftrightarrow\quad M^+\in\B_N(R),
\qquad
L\in\B_N(R)\quad\Longleftrightarrow\quad L^+\in\A_N(R).
\]
In particular, $\A_N(R)$ has the two-out-of-three property if and
only if $\B_N(R)$ does.
\end{proposition}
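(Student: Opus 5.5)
We compare the three defining conditions of $\A_N(R)$ for $M$ with those of $\B_N(R)$ for $M^+$, one at a time, using that $(-)^+$ is exact, faithful and detects zero modules and isomorphisms (a map $f$ is an isomorphism iff $f^+$ is). The key tools are adjunction and Hom-evaluation isomorphisms that need only finite presentation of $N$. This holds over $R$ because $R$ is Noetherian, and over $E$ by the standing hypothesis that ${}_EN$ has a resolution $P\to N$ by finitely generated projectives. The same approach, with the two roles exchanged, handles $L$.

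\emph{Condition (1).} Adjunction gives $(M\otimes_RN)^+\cong\Hom_R(N,M^+)$ naturally. Applying it to a projective resolution of $M$ and using exactness of $(-)^+$ gives $\Tor_i^R(M,N)^+\cong\Ext_R^i(N,M^+)$. Hence $\Tor^R_{>0}(M,N)=0$ if and only if $\Ext^{>0}_R(N,M^+)=0$.

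\emph{Condition (2).} Set $X=M\otimes_RN$, so that $X^+\cong\Hom_R(N,M^+)$ as right $E$-modules. For each finitely generated projective $Q$ there is the evaluation isomorphism $X^+\otimes_EQ\cong\Hom_E(Q,X)^+$. Applying it degreewise to $P$ gives $\Tor_i^E(X^+,N)\cong\Ext_E^i(N,X)^+$. Therefore $\Tor^E_{>0}(\Hom_R(N,M^+),N)=0$ if and only if $\Ext_E^{>0}(N,M\otimes_RN)=0$.

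\emph{Condition (3).} I will check that under these identifications $\delta^N_{M^+}$ corresponds to $(\epsilon_M^N)^+$. Concretely, the composite
\[
\Hom_R(N,M^+)\otimes_EN\cong X^+\otimes_EN\cong\Hom_E(N,X)^+\xrightarrow{(\epsilon_M)^+}M^+
\]
should equal the trace map. Verify this on an elementary tensor $f\otimes y$ evaluated at $m$: both sides give $f(y)(m)$, up to the adjunction convention $f\leftrightarrow(m\otimes n\mapsto f(n)(m))$. So $\delta_{M^+}$ is an isomorphism iff $\epsilon_M$ is. The main obstacle is this step: keeping track of the sides and module structures so that the evaluation isomorphism is compatible with the trace map.

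\emph{The statement for $L$.} For a left $R$-module $L$ the argument is symmetric. Use $L^+\otimes_RN\cong\Hom_R(N,L)^+$, which is Hom-evaluation over the Noetherian ring $R$ and extends to $\Tor_i^R(L^+,N)\cong\Ext^i_R(N,L)^+$ via a finite free resolution of $N$. Next, $(Y\otimes_EN)^+\cong\Hom_E(N,Y^+)$ for $Y=\Hom_R(N,L)$ gives $\Ext^i_E(N,L^+\otimes_RN)\cong\Tor_i^E(\Hom_R(N,L),N)^+$. Finally, $\epsilon_{L^+}$ matches $(\delta_L)^+$, checked on elements as before.

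\emph{The final assertion.} Suppose $\A_N(R)$ has the two-out-of-three property, and let $0\to A\to B\to C\to0$ be exact with two terms in $\B_N(R)$. The dual sequence $0\to C^+\to B^+\to A^+\to0$ is exact with the corresponding two terms in $\A_N(R)$. So the third term's dual lies in $\A_N(R)$, hence that term lies in $\B_N(R)$ by the second equivalence. The converse is the same argument using the first equivalence.
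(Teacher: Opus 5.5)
Your proposal is correct and is essentially the paper's proof. It uses the same four character-duality identities for $\Tor$ and $\Ext$, with the finitely generated projective resolution of ${}_EN$ supplying the evaluation isomorphisms over $E$; the same elementwise identifications of $(\epsilon_M^N)^+$ with $\delta^N_{M^+}$ and of $(\delta^N_L)^+$ with $\epsilon^N_{L^+}$; and the same transfer of the two-out-of-three property by dualizing short exact sequences. One small point: for condition (1), resolving $N$ rather than $M$ gives $\Tor_i^R(M,N)^+\cong\Ext_R^i(N,M^+)$ directly from adjunction, whereas resolving $M$ tacitly uses that $F^+$ is injective whenever $F$ is flat.
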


\begin{proof}
The character module functor is exact, detects zero modules, and hence
reflects isomorphisms. The standing hypotheses provide resolutions of
$N$ by finitely generated projective modules over both $R$ and $E$.
Tensor--Hom adjunction and evaluation on the terms of these resolutions
give, for $i\geq0$,
\begin{align*}
\Ext_R^i(N,M^+)&\cong\Tor_i^R(M,N)^+,\\
\Tor_i^E(X^+,N)&\cong\Ext_E^i(N,X)^+,\\
\Tor_i^R(L^+,N)&\cong\Ext_R^i(N,L)^+,\\
\Ext_E^i(N,Y^+)&\cong\Tor_i^E(Y,N)^+,
\end{align*}
where $X$ is a left $E$-module and $Y$ is a right $E$-module.
The second and third identities use the finite generation of the
projective terms, while the first and fourth follow directly from adjunction.
In degree zero there are natural isomorphisms
\[
\Hom_R(N,M^+)\cong(M\otimes_RN)^+,
\qquad
\Hom_R(N,L)^+\cong L^+\otimes_RN.
\]
The displayed identities identify the two vanishing conditions for
$M\in\A_N(R)$ with those for $M^+\in\B_N(R)$, and those for
$L\in\B_N(R)$ with those for $L^+\in\A_N(R)$.

We check the natural maps as well. Set $X=M\otimes_RN$. The evaluation
isomorphism
\[
X^+\otimes_EN\xrightarrow{\ \cong\ }\Hom_E(N,X)^+,
\qquad
\varphi\otimes n\longmapsto\bigl(h\mapsto\varphi(h(n))\bigr),
\]
identifies the dual $(\epsilon_M^N)^+$ with $\delta_{M^+}^N$.
Indeed, under $X^+\cong\Hom_R(N,M^+)$, a functional $\varphi$
corresponds to $f_\varphi$ with
$f_\varphi(n)(m)=\varphi(m\otimes n)$, and both maps send
$\varphi\otimes n$ to the functional
$m\mapsto\varphi(m\otimes n)$.

Similarly, put $Y=\Hom_R(N,L)$. Under
$Y^+\cong L^+\otimes_RN$ and
$(Y\otimes_EN)^+\cong\Hom_E(N,Y^+)$, the dual
$(\delta_L^N)^+$ identifies with $\epsilon_{L^+}^N$:
for $\lambda\in L^+$, $n\in N$, and $g\in Y$, both send
$\lambda$ to the map whose value at $n$, evaluated on $g$, is
$\lambda(g(n))$. Since character duality reflects vanishing and
isomorphisms, all three defining conditions are equivalent in both
assertions.

Finally, character duality takes a short exact sequence to a short exact
sequence in the opposite module category, with its order reversed.
Applying the established equivalences to its three terms transfers the
two-out-of-three property from either class to the other.
\end{proof}

\section{The Commutative Case}\label{sec:commutative}
Throughout this section we suppose $R$ is a commutative Noetherian ring. The module $N$ is finitely generated, but the modules in the Auslander and Bass classes need not be finitely generated. In this setting $E=\End_R(N)$ is a module-finite $R$-algebra and hence is left and right Noetherian. Moreover, $N$ is finitely generated over $E$: any set of $R$-generators is also a set of $E$-generators. Thus $N$ has resolutions by finitely generated free modules over both $R$ and $E$.

We first note that
\begin{equation}\label{center-doublecentralizer}
Z(E)=\End_E(N).
\end{equation}
Indeed, if $f\in Z(E)$, then for every $e\in E$ and $n\in $N,
\[
f(en)=(fe)(n)=(ef)(n)=ef(n),
\]
therefore $f$ is $E$-linear, giving $Z(E)\subseteq\End_E(N)$. For the opposite inclusion we first note that if $r\in R$ and $\lambda_r:N\rightarrow N$ is the map $n\mapsto rn$, then $\lambda_R\in E$ by the commutativity of $R$. Therefore if $f\in \End_E(N)$, then
\[
f(rn)=f(\lambda_r(n))=\lambda_rf(n)=rf(n),
\]
showing that $f$ is $R$-linear. Moreover, for all $e\in E$, the $E$-linearity of $f$ implies
\[
f(e(n))=ef(n),
\]
and therefore $f\in Z(E)$.

\begin{prop}\label[prop]{extunderflatbasechange}
Let $T$ be a left Noetherian $R$-algebra and $S$ a commutative flat $R$-algebra. Let $A$ be a finitely generated left $T$-module and let $B$ be a left $T$-module. Then the natural map
\[
\theta_{A,B}:\Hom_T(A,B)\otimes_R S
\longrightarrow\Hom_{T\otimes_R S}(A\otimes_R S,B\otimes_R S)
\]
given by $f\otimes s\mapsto(a\otimes t\mapsto f(a)\otimes st)$ is an isomorphism. Moreover, it induces isomorphisms
\[
\Ext_T^i(A,B)\otimes_R S
\cong\Ext_{T\otimes_R S}^i(A\otimes_R S,B\otimes_R S)
\qquad(i\geq0).
\]
\end{prop}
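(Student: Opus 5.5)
The plan is the standard finite-presentation argument. First treat the case $A=T^n$ finite free. Then $\Hom_T(T^n,B)\cong B^n$, and $A\otimes_RS\cong(T\otimes_RS)^n$. Hence
\[
\Hom_{T\otimes_RS}\bigl((T\otimes_RS)^n,B\otimes_RS\bigr)\cong(B\otimes_RS)^n .
\]
Under these identifications $\theta_{T^n,B}$ is the identity of $(B\otimes_RS)^n$. One checks this on the standard basis, using that $f\mapsto f(1)$ matches $f\otimes s$ with $f(1)\otimes s$. Both sides are additive in $A$ and $\theta$ is natural in $A$, so it is compatible with maps between finite free modules.

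For general finitely generated $A$, use that $T$ is left Noetherian to choose a finite presentation $T^m\to T^n\to A\to0$. Applying $\Hom_T(-,B)$ gives a left exact sequence $0\to\Hom_T(A,B)\to B^n\to B^m$. Tensoring with $S$ keeps it exact since $S$ is flat over $R$; here the $\Hom$ groups are $R$-modules through the central action of $R$ on $T$. On the other side, $-\otimes_RS$ is right exact, so $(T\otimes_RS)^m\to(T\otimes_RS)^n\to A\otimes_RS\to0$ is a presentation over $T\otimes_RS$. Applying $\Hom_{T\otimes_RS}(-,B\otimes_RS)$ gives another left exact sequence. Naturality of $\theta$ yields a map between the two sequences that is an isomorphism in the two free spots, so the five lemma (or a kernel comparison) shows $\theta_{A,B}$ is an isomorphism.

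For Ext, take a resolution $P_\bullet\to A$ by finitely generated free left $T$-modules, again available since $T$ is left Noetherian. Since $S$ is flat over $R$ and $T$ and $A$ are $R$-modules, $P_\bullet\otimes_RS\to A\otimes_RS$ is still exact. It is therefore a resolution by finitely generated free $T\otimes_RS$-modules. The degreewise isomorphisms $\theta_{P_j,B}$ are natural, so they assemble into an isomorphism of complexes
\[
\Hom_T(P_\bullet,B)\otimes_RS\cong\Hom_{T\otimes_RS}(P_\bullet\otimes_RS,B\otimes_RS).
\]
Taking cohomology, flatness of $S$ lets cohomology commute with $-\otimes_RS$ on the left. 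This gives $\Ext^i_T(A,B)\otimes_RS\cong\Ext^i_{T\otimes_RS}(A\otimes_RS,B\otimes_RS)$.

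The only delicate point is bookkeeping rather than a real obstacle. One must check that $\theta$ is well defined over the tensor product $T\otimes_RS$, i.e., $T\otimes_RS$-linear; this uses that $S$ is commutative and that the images of $R$ in $T$ are central. One must also check that the identification in the free case is really $\theta$, so that naturality legitimately feeds the five lemma.
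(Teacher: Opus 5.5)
Your proposal is correct and follows essentially the same route as the paper: verify the finite free case, then apply the degreewise natural isomorphism to a resolution of $A$ by finitely generated free $T$-modules and use flatness of $S$ to pass to cohomology. Your separate finite-presentation argument for $\theta_{A,B}$ is just the degree-zero case of that computation, which the paper leaves implicit. You also spell out the $T\otimes_R S$-linearity of $\theta$, which the paper does not check explicitly.
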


\begin{proof}
We first note that the claim is clear if $A$ is finitely generated free over $T$. Let $F$ be a resolution of $A$ by finitely generated free $T$-modules. Since $S$ is flat over $R$, the complex $F\otimes_R S$ is a free resolution of $A\otimes_R S$ over $T\otimes_R S$. Moreover, the degreewise natural maps give an isomorphism of complexes
\[
\Hom_T(F,B)\otimes_R S
\cong\Hom_{T\otimes_R S}(F\otimes_R S,B\otimes_R S).
\]
Taking cohomology, and again using flatness, gives the claim.
\end{proof}

\begin{cor}
Suppose $S$ is a commutative flat $R$-algebra. Then
\[
Z(E)\otimes_R S\cong Z(\End_S(N\otimes_R S)).
\]
\end{cor}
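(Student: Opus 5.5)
We reduce the statement to flat base change for endomorphism rings, using the identification \eqref{center-doublecentralizer} on both sides. Write $E=\End_R(N)$ and $E_S=\End_S(N\otimes_R S)$. Over the commutative Noetherian ring $R$, the module $N$ is finitely presented. Flatness of $S$ then gives $\End_R(N)\otimes_R S\cong\Hom_S(N\otimes_R S,N\otimes_R S)$; this is the standard base change for Hom out of a finitely presented module, and is also the case $T=R$ of \Cref{extunderflatbasechange}. The map is $f\otimes s\mapsto s(f\otimes\id_S)$, and it is a ring isomorphism $E\otimes_R S\cong E_S$. Note that $E\otimes_R S=E\otimes_R S$ as rings, where $S$ is commutative and central.

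By \eqref{center-doublecentralizer}, applied over $R$ and over the commutative Noetherian ring $S$ (Noetherianity is not really needed for that identity), we have $Z(E)=\End_E(N)$ and $Z(E_S)=\End_{E_S}(N\otimes_R S)$. The key step is now \Cref{extunderflatbasechange} with $T=E$, which is left Noetherian and an $R$-algebra, with $A=B=N$. Here $N$ is a finitely generated left $E$-module. This gives
\[
\End_E(N)\otimes_R S\cong\Hom_{E\otimes_R S}(N\otimes_R S,N\otimes_R S).
\]
The $E\otimes_R S$-module structure on $N\otimes_R S$ is the one transported from $E_S$ under the ring isomorphism above. Indeed, $e\otimes s$ acts by $n\otimes t\mapsto e(n)\otimes st$, which is exactly the action of the image of $e\otimes s$ in $E_S$. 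Hence the right-hand side equals $\End_{E_S}(N\otimes_R S)=Z(E_S)$, and the left-hand side is $Z(E)\otimes_R S$.

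The only delicate point is checking this compatibility of module structures, namely that the two notions of $E\otimes_R S$-linearity agree. Linearity over $E\otimes_R S$ amounts to commuting with all $e\otimes s$, and these elements span $E\otimes_R S$, which maps onto $E_S$. One must also check that the composite isomorphism is the natural map $Z(E)\otimes_R S\to Z(E_S)$, $z\otimes s\mapsto s(z\otimes\id_S)$. This follows by unwinding $\theta_{N,N}$: it sends $z\otimes s$ to $n\otimes t\mapsto z(n)\otimes st$.
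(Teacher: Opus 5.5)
Your proposal is correct and follows the paper's own proof. You apply \Cref{extunderflatbasechange} over $R$ to get $E\otimes_R S\cong\End_S(N\otimes_R S)$, then apply it over $E$ with $A=B=N$, and use \eqref{center-doublecentralizer} on both sides; you also spell out the compatibility of the two module structures on $N\otimes_R S$, which the paper leaves implicit, and you rightly note that $S$ need not be Noetherian here.
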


\begin{proof}
From \Cref{extunderflatbasechange}, we have
$E\otimes_R S\cong\End_S(N\otimes_R S)$. Applying the same proposition over $E$ and using \eqref{center-doublecentralizer} gives the claim.
\end{proof}

\begin{prop}\label[prop]{endflatbasechange}
Suppose $S$ is a commutative Noetherian flat $R$-algebra. If $N$ is $\End$-semidualizing over $R$, then $N\otimes_R S$ is $\End$-semidualizing over $S$. The converse holds if $S$ is faithfully flat. In particular, the End-semidualizing condition behaves well with respect to localization and completion.
\end{prop}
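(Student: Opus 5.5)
The plan is to apply \Cref{extunderflatbasechange} twice: once over $T=R$ to identify the new endomorphism ring, and once over $T=E$ to identify the double centralizer and the Ext groups. Set $N_S=N\otimes_R S$ and $E_S=\End_S(N_S)$. Taking $T=R$, $A=B=N$ in \Cref{extunderflatbasechange} gives a ring isomorphism
\[
\theta_{N,N}:E\otimes_R S\xrightarrow{\ \cong\ }E_S.
\]
We first check that it is multiplicative; this holds because it sends $f\otimes s$ to $f\otimes s\cdot(-)$. Under this identification, the left $E_S$-module $N_S$ is exactly the base change $N\otimes_R S$ of the left $E$-module $N$ to $E\otimes_R S$.

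Next apply \Cref{extunderflatbasechange} with $T=E$, which is left Noetherian and an $R$-algebra, and with $A=B=N$, which is finitely generated over $E$. This yields isomorphisms
\[
\Ext_E^i(N,N)\otimes_R S\cong\Ext^i_{E_S}(N_S,N_S)\qquad(i\ge 0).
\]
For $i=0$ we get $\End_E(N)\otimes_R S\cong\End_{E_S}(N_S)$. The key compatibility to verify is that, under these identifications, $\iota_N\otimes_R S$ corresponds to $\iota_{N_S}:S\to\End_{E_S}(N_S)$. This should be a direct check on elements: $r\otimes s$ maps to multiplication by $rs$ on $N_S$.

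With these identifications the forward direction is immediate. If $N$ is $\End$-semidualizing, then $\Ext^{>0}_{E_S}(N_S,N_S)\cong\Ext^{>0}_E(N,N)\otimes_R S=0$. Also $\iota_{N_S}$ identifies with $\iota_N\otimes_R S$, which is an isomorphism. The finiteness hypothesis for $N_S$ is automatic since $S$ is Noetherian, as noted at the start of the section.

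For the converse, assume $S$ is faithfully flat. Then $\Ext^i_E(N,N)\otimes_R S=0$ forces $\Ext^i_E(N,N)=0$. Moreover, $\ker(\iota_N)\otimes_R S\cong\ker(\iota_N\otimes S)=0$ and similarly for cokernels, by flatness. Faithful flatness then gives that $\iota_N$ is an isomorphism.

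For the last sentence, localizations $R_\p$ and $R[W^{-1}]$ are flat, and the $\m$-adic completion of a local ring (or more generally $I$-adic completion) is flat and Noetherian, and faithfully flat in the local case. Also, $\End$-semidualizing can be checked after localizing at all primes, using faithful flatness of $\prod_\p R_\p$, or simply because vanishing and bijectivity are local properties.

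The main obstacle I expect is the compatibility bookkeeping rather than any real difficulty. One must check that $\theta$ is a ring map, that the $E_S$-module structure on $N_S$ matches the $E\otimes_R S$-structure, and that the degree-zero isomorphism carries $\iota_N\otimes S$ to $\iota_{N_S}$.
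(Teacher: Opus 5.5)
Your proposal is correct and follows the paper's proof essentially verbatim. Like the paper, you apply \Cref{extunderflatbasechange} over $R$ and over $E$ to identify $\End_S(N\otimes_R S)$ with $E\otimes_R S$ and to base-change the self-extensions, check that $\iota_N\otimes_R S$ corresponds to $\iota_{N\otimes_R S}$, and conclude by flatness for ascent and by faithful flatness for descent. One minor slip concerns your extra local--global aside: $\prod_{\p}R_{\p}$ is generally not Noetherian, so it does not meet the hypotheses of the proposition, but your alternative justification suffices, since vanishing of $\Ext^i_E(N,N)$ and bijectivity of $\iota_N$ are local properties.
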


\begin{proof}
Write $N_S=N\otimes_R S$ and $E_S=E\otimes_R S$. By \Cref{extunderflatbasechange}, $E_S\cong\End_S(N_S)$ and
\[
\Ext_E^i(N,N)\otimes_R S\cong\Ext_{E_S}^i(N_S,N_S)
\qquad(i\geq0).
\]
There is a commutative diagram
\[
\begin{tikzcd}
R\otimes_R S \arrow[r,"\iota_N\otimes S"] \arrow[d,"\cong"'] &
\End_E(N)\otimes_R S \arrow[d,"\theta_{N,N}"]\\
S \arrow[r,"\iota_{N_S}"'] & \End_{E_S}(N_S).
\end{tikzcd}
\]
The vertical maps are isomorphisms. Thus the vanishing of positive self-extensions and the double-centralizer isomorphism ascend along $R\to S$. If $S$ is faithfully flat, the same formulas show that they descend.
\end{proof}

\begin{prop}\label[prop]{foxbyflatbasechange}
Suppose $S$ is a commutative Noetherian flat $R$-algebra and let $M$ be an $R$-module.
\begin{enumerate}
\item If $M\in\A_N(R)$, then $M\otimes_R S\in\A_{N\otimes_R S}(S)$. The converse holds if $S$ is faithfully flat.
\item If $M\in\B_N(R)$, then $M\otimes_R S\in\B_{N\otimes_R S}(S)$. The converse holds if $S$ is faithfully flat.
\end{enumerate}
\end{prop}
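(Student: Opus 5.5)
The plan is to base change each defining condition of $\A_N(R)$ and $\B_N(R)$ separately along the flat map $R\to S$, and then use faithful flatness to descend. Write $N_S=N\otimes_RS$, $E_S=E\otimes_RS\cong\End_S(N_S)$ (by \Cref{extunderflatbasechange}), and $M_S=M\otimes_RS$. The key observation is that each Tor, Ext, Hom and tensor module appearing in the definitions commutes with $-\otimes_RS$, compatibly with the natural maps $\epsilon$ and $\delta$. Once this is established, (1) and (2) follow immediately. Indeed, $-\otimes_RS$ is exact, so it preserves vanishing and isomorphisms. When $S$ is faithfully flat it also reflects them.

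For (1), I would establish three identifications.

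\begin{enumerate}
\item[(a)] $\Tor_i^R(M,N)\otimes_RS\cong\Tor_i^S(M_S,N_S)$. This holds by flatness: tensor a free resolution of $N$ with $S$ and use $M\otimes_RF\otimes_RS\cong M_S\otimes_S(F\otimes_RS)$.
\item[(b)] $(M\otimes_RN)\otimes_RS\cong M_S\otimes_SN_S$, as left $E_S$-modules.
\item[(c)] $\Ext_E^i(N,M\otimes_RN)\otimes_RS\cong\Ext_{E_S}^i(N_S,(M\otimes_RN)\otimes_RS)$. This is \Cref{extunderflatbasechange} applied with $T=E$, $A=N$, and $B=M\otimes_RN$. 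Here $E$ is Noetherian and $N$ is finitely generated over $E$. Note that $B$ need not be finitely generated, which the proposition allows.
\end{enumerate}

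Then I would check that the square with top row $\epsilon_M^N\otimes_RS$, vertical maps the identification $M\otimes_RS=M_S$ and $\theta_{N,M\otimes_RN}$, and bottom row $\epsilon_{M_S}^{N_S}$ commutes. This is a direct elementwise computation: $x\otimes s$ goes to the map $y\otimes t\mapsto x\otimes y\otimes st$ either way.

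For (2), the argument is parallel. \Cref{extunderflatbasechange} with $T=R$, $A=N$, $B=M$ gives $\Ext_R^i(N,M)\otimes_RS\cong\Ext_S^i(N_S,M_S)$, and in degree $0$ gives $\Hom_R(N,M)\otimes_RS\cong\Hom_S(N_S,M_S)$ as right $E_S$-modules. For Tor over $E$, I would take a resolution $P$ of $N$ by finitely generated free left $E$-modules. Then $P\otimes_RS$ is a free resolution of $N_S$ over $E_S$, and
\[
(\Hom_R(N,M)\otimes_RS)\otimes_{E_S}(P\otimes_RS)\cong(\Hom_R(N,M)\otimes_EP)\otimes_RS.
\]
Taking homology and using flatness of $S$ gives $\Tor_i^E(\Hom_R(N,M),N)\otimes_RS\cong\Tor_i^{E_S}(\Hom_S(N_S,M_S),N_S)$. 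Finally I would check that $\delta_M^N\otimes_RS$ corresponds to $\delta_{M_S}^{N_S}$, again by evaluating on elementary tensors $f\otimes s\otimes n\otimes t$.

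The main obstacle is purely bookkeeping. One must verify that these abstract isomorphisms respect the module structures over $E_S$ and are natural enough that the squares involving $\epsilon$ and $\delta$ commute. With those squares in hand, the forward direction follows because each condition is preserved under $-\otimes_RS$. For the converse when $S$ is faithfully flat, a module $X$ vanishes iff $X\otimes_RS$ does, and a map is an isomorphism iff its base change is. Hence all three conditions descend, giving $M\in\A_N(R)$, respectively $M\in\B_N(R)$.
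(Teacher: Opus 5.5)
Your proposal is correct and follows essentially the same route as the paper. Both arguments base change each Tor/Ext condition using flatness and \Cref{extunderflatbasechange} (over $R$ and over $E$), identify $\epsilon_M\otimes_RS$ and $\delta_M^N\otimes_RS$ with the unit and evaluation maps for $N\otimes_RS$, and then use faithful flatness to descend. Your version simply spells out the elementwise compatibility checks that the paper leaves implicit.
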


\begin{proof}
Put $E_S=E\otimes_R S\cong\End_S(N\otimes_R S)$. Flat base change for $\Tor$, and \Cref{extunderflatbasechange} applied over $R$ and $E$, give
\begin{align*}
\Tor_i^R(M,N)\otimes_R S
&\cong\Tor_i^S(M\otimes_R S,N\otimes_R S),\\
\Ext_E^i(N,M\otimes_R N)\otimes_R S
&\cong\Ext_{E_S}^i(N\otimes_R S,(M\otimes_R N)\otimes_R S),\\
\Ext_R^i(N,M)\otimes_R S
&\cong\Ext_S^i(N\otimes_R S,M\otimes_R S),\\
\Tor_i^E(\Hom_R(N,M),N)\otimes_R S
&\cong\Tor_i^{E_S}(\Hom_S(N\otimes_R S,M\otimes_R S),N\otimes_R S).
\end{align*}
The $\Tor$ identities follow by tensoring free resolutions with $S$; flatness over $R$ preserves their exactness. Under the degree-zero identifications, the maps $\epsilon_M\otimes_R S$ and $\delta_M^N\otimes_R S$ are precisely the corresponding unit and evaluation maps after base change. This proves ascent. Faithful flatness reflects the required vanishings and isomorphisms, proving descent.
\end{proof}

\begin{prop}\label[prop]{auslanderbassduality}
Suppose $I$ is an injective $R$-module and set $D(-)=\Hom_R(-,I)$. Then the following hold for every $R$-module $M$.
\begin{enumerate}
\item[$(1)$] If $M\in\A_N(R)$, then $D(M)\in\B_N(R)$.
\item[$(2)$] If $M\in\B_N(R)$, then $D(M)\in\A_N(R)$.
\end{enumerate}
Both converses hold if $I$ is an injective cogenerator.
\end{prop}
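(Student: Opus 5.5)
The argument imitates \Cref{character-foxby-duality}, with $(-)^+$ replaced by $D(-)=\Hom_R(-,I)$. Since $R$ is commutative, $D$ is exact, and we use that $N$ has resolutions by finitely generated free modules over both $R$ and $E$. The first task is to record four natural isomorphisms, valid for $i\ge 0$, for an $R$-module $M$, a left $E$-module $X$ and a right $E$-module $Y$:
\begin{align*}
\Ext_R^i(N,D(M))&\cong D(\Tor_i^R(M,N)),\\
\Tor_i^E(D(X),N)&\cong D(\Ext_E^i(N,X)),\\
\Tor_i^R(D(M),N)&\cong D(\Ext_R^i(N,M)),\\
\Ext_E^i(N,D(Y))&\cong D(\Tor_i^E(Y,N)).
\end{align*}
The first and fourth come from tensor--Hom adjunction applied to a free resolution of $N$, together with exactness of $D$. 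The second and third come from the evaluation isomorphism $D(X)\otimes_E Q\cong \Hom_R(\Hom_E(Q,X),I)$ for finitely generated projective $Q$, exactly as in the proof of \Cref{injectiveex}. Here $D(X)$ is a right $E$-module through the left $E$-action on $X$; commutativity of $R$ is what lets the $R$- and $E$-actions coexist as in the paper's conventions. In degree zero we also have
\[
\Hom_R(N,D(M))\cong D(M\otimes_R N),\qquad D(\Hom_R(N,M))\cong D(M)\otimes_R N .
\]
The second of these is evaluation, and it uses that $N$ is finitely presented over $R$ and that $I$ is injective.

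For (1), let $M\in\A_N(R)$ and put $X=M\otimes_RN$. The first isomorphism gives $\Ext_R^{>0}(N,D(M))=0$. Using $\Hom_R(N,D(M))\cong D(X)$ and the second isomorphism, we get $\Tor^E_{>0}(\Hom_R(N,D(M)),N)\cong D(\Ext_E^{>0}(N,X))=0$. It remains to show that $\delta_{D(M)}^N$ is an isomorphism. Under the evaluation isomorphism $D(X)\otimes_EN\cong D(\Hom_E(N,X))$, the map $\delta^N_{D(M)}$ should identify with $D(\epsilon_M^N)$. To check this, trace an element $\varphi\otimes n$: both sides send it to the functional $m\mapsto\varphi(m\otimes n)$, just as in \Cref{character-foxby-duality}. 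Since $\epsilon_M^N$ is an isomorphism, so is $D(\epsilon^N_M)$, and hence so is $\delta^N_{D(M)}$.

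Part (2) is symmetric. Let $M\in\B_N(R)$ and put $Y=\Hom_R(N,M)$. The third isomorphism gives $\Tor^R_{>0}(D(M),N)=0$. We have $D(M)\otimes_RN\cong D(Y)$, and the fourth isomorphism gives $\Ext_E^{>0}(N,D(Y))=0$. Adjunction gives $D(Y\otimes_EN)\cong\Hom_E(N,D(Y))$, and under these identifications $\epsilon^N_{D(M)}$ corresponds to $D(\delta_M^N)$, which is an isomorphism.

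For the converses, assume $I$ is an injective cogenerator. Then $D$ is faithful, so it detects zero modules, and being exact it also reflects isomorphisms. The same chain of identifications therefore runs backwards. For example, if $D(M)\in\B_N(R)$, then $D(\Tor_i^R(M,N))=0$ forces $\Tor_i^R(M,N)=0$, the other vanishing condition follows in the same way, and $D(\epsilon_M^N)$ being an isomorphism forces $\epsilon_M^N$ to be one.

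The main obstacle is the bookkeeping in degree zero. One must verify that the evaluation isomorphisms are compatible with the module structures, so that $D(X)\otimes_EN$ is formed correctly, and that they carry $\delta$ to $D(\epsilon)$ and $\epsilon$ to $D(\delta)$ naturally. I would handle this by the same element chase used in \Cref{character-foxby-duality}, first for $N$ free over $E$, or over $R$, and then passing to the general case through a finite presentation, which is enough because both sides are right exact in the relevant variable.
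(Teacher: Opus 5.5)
Your proposal is correct and follows essentially the same route as the paper: the same four duality isomorphisms obtained from finite free resolutions of $N$ over $R$ and $E$, the degree-zero identifications of $D(\epsilon_M^N)$ with $\delta^N_{D(M)}$ and of $D(\delta_M^N)$ with $\epsilon^N_{D(M)}$, and exactness plus zero-detection of $D$ for the converses. One small slip: $D$ is exact because $I$ is injective, not because $R$ is commutative. Commutativity is what makes $D(M)$ an $R$-module again and lets the induced $R$-structures on $E$-modules make sense.
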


\begin{proof}
The functor $D$ is exact. Using degreewise finitely generated free resolutions of $N$ over $R$ and $E$ gives natural isomorphisms
\begin{align}
D\Tor_i^R(M,N)&\cong\Ext_R^i(N,D(M)),\label{duality-tor-R}\\
D\Ext_E^i(N,X)&\cong\Tor_i^E(D(X),N),\label{duality-ext-E}\\
D\Ext_R^i(N,M)&\cong\Tor_i^R(D(M),N),\label{duality-ext-R}\\
D\Tor_i^E(Y,N)&\cong\Ext_E^i(N,D(Y)).\label{duality-tor-E}
\end{align}
Here $X$ is a left $E$-module and $Y$ is a right $E$-module, endowed with their induced $R$-module structures. For example, the finite freeness of each term $P_j$ of an $E$-free resolution of $N$ gives
$D\Hom_E(P_j,X)\cong D(X)\otimes_E P_j$. Exactness of $D$ then gives \eqref{duality-ext-E}. The other isomorphisms follow in the same way from tensor--Hom adjunction.

In degree zero these formulas include
\[
D(M\otimes_R N)\cong\Hom_R(N,D(M)),
\qquad
D\Hom_R(N,M)\cong D(M)\otimes_R N.
\]
Under the degree-zero instance of \eqref{duality-ext-E} and the first of these isomorphisms, the dual of the unit $\epsilon_M$ is the evaluation map $\delta_{D(M)}^N$. Similarly, the dual of $\delta_M^N$ is identified with $\epsilon_{D(M)}$.

Consequently, the duals of the three conditions defining $\A_N(R)$ are exactly the three conditions defining membership of $D(M)$ in $\B_N(R)$. The same argument applies with the two classes interchanged. This proves the implications. If $I$ is a cogenerator, $D$ detects zero modules and, being exact, detects isomorphisms. It therefore also proves both converses.
\end{proof}

\begin{remark}
When $(R,\m,k)$ is local, one may take $I=E_R(k)$, so Matlis duality exchanges membership in the two classes. In particular, when $R$ is complete it gives the usual duality between the corresponding finitely generated and Artinian modules.
\end{remark}

\begin{comment}
\begin{cor}\label[cor]{commutative-twooutofthree-duality}
The class $\A_N(R)$ has the two-out-of-three property if and only if $\B_N(R)$ has the two-out-of-three property.
\end{cor}

\begin{proof}
Choose an injective cogenerator $I$, for example
$I=\bigoplus_{\m\in\Max(R)}E_R(R/\m)$. Applying $D=\Hom_R(-,I)$ to a short exact sequence gives another short exact sequence, with its order reversed. Now apply \Cref{auslanderbassduality} and its converses.
\end{proof}
\end{comment}

\begin{defn}
Inspired by the terminology in \cite{DE21}, we say an $R$-module $N$ is \emph{locally deep} if
$\depth_{R_{\p}}N_{\p}\geq\depth R_{\p}$ for all $\p\in\Spec(R)$, where the depth of the zero module is $\infty$.
\end{defn}

\begin{theorem}\label[theorem]{pdmcm}
Suppose $N$ is an $\End$-semidualizing module. Then the following are equivalent:
\begin{enumerate}
\item[$(1)$] $N$ is locally deep.
\item[$(2)$] Every $R$-module $M$ with $\fd_R(M)<\infty$ belongs to $\A_N(R)$.
\item[$(3)$] Every $R$-module $M$ with $\id_R(M)<\infty$ belongs to $\B_N(R)$.
\end{enumerate}
\end{theorem}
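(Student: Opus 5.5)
The plan is to prove $(2)\Leftrightarrow(3)$ by duality and $(1)\Leftrightarrow(2)$ by reducing $(2)$ to a pure $\Tor$-vanishing condition, which is then a depth question.

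\emph{$(2)\Leftrightarrow(3)$.} Fix an injective cogenerator $I$ and set $D(-)=\Hom_R(-,I)$. By \Cref{auslanderbassduality} and its converses, $M\in\A_N(R)$ if and only if $D(M)\in\B_N(R)$, and $L\in\B_N(R)$ if and only if $D(L)\in\A_N(R)$. Over a commutative Noetherian ring one has $\id_R D(M)=\fd_R M$ and $\fd_R D(L)=\id_R L$.
\begin{itemize}
\item Assume $(2)$ and let $\id_R L<\infty$. Then $\fd_R D(L)<\infty$, so $D(L)\in\A_N(R)$, and hence $L\in\B_N(R)$.
\item Assume $(3)$ and let $\fd_R M<\infty$. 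Then $\id_R D(M)<\infty$, so $D(M)\in\B_N(R)$, and hence $M\in\A_N(R)$.
\end{itemize}

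\emph{Reducing $(2)$ to $\Tor$-vanishing.} I claim that for $M$ of finite flat dimension, $M\in\A_N(R)$ as soon as $\Tor^R_{>0}(M,N)=0$. Take a flat resolution $0\to F_n\to\cdots\to F_0\to M\to0$ with syzygies $K_i$, so $K_0=M$ and $K_n=F_n$. The $\Tor$-vanishing passes to every $K_i$ by dimension shifting. Hence each $0\to K_{i+1}\to F_i\to K_i\to 0$ stays exact after applying $-\otimes_RN$.

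By \Cref{projectiveex}, each $F_i$ lies in $\A_N(R)$. Descending induction on $i$ then gives $\Ext_E^{>0}(N,K_i\otimes_RN)=0$. For the degree-one case, the map $\Ext^1_E(N,K_i\otimes N)\to\Ext^2_E(N,K_{i+1}\otimes N)=0$ is injective because $\Ext^1_E(N,F_i\otimes N)=0$.

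It follows that $\Hom_E(N,-)$ keeps these sequences exact. The five lemma, applied as in the last step of the proof of \Cref{auslander2outof3}, then shows each $\epsilon_{K_i}$ is an isomorphism, again by descending induction. So $(2)$ is equivalent to: $\Tor^R_{>0}(M,N)=0$ for every $M$ with $\fd_R M<\infty$.

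\emph{$(1)\Rightarrow(2)$.} Suppose $\Tor^R_s(M,N)\neq0$, where $s>0$ is the largest index with nonvanishing $\Tor$; such an $s$ exists since $s\le\fd M$. Choose $\p$ minimal in $\Supp\Tor^R_s(M,N)$. Then $\p R_\p$ is associated to $\Tor_s^{R_\p}(M_\p,N_\p)$; minimal primes of the support of an arbitrary module over a Noetherian ring are associated. Hence the complex $X=M_\p\otimes^{\mathbf L}_{R_\p}N_\p$ has $\depth X=-s$.

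Now apply the Foxby--Iyengar derived depth formula for $M_\p$ of finite flat dimension and $N_\p$ finitely generated:
\[
-s=\depth X=\depth M_\p+\depth N_\p-\depth R_\p\ \ge\ 0+0.
\]
Here $\depth M_\p\ge 0$ for a module, and $\depth N_\p-\depth R_\p\ge0$ by local deepness. This contradicts $s>0$.

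\emph{$(2)\Rightarrow(1)$.} Suppose $\depth N_\p<\depth R_\p=d$ for some $\p$. Let $\mathbf x$ be a maximal $R_\p$-regular sequence and put $M=R_\p/(\mathbf x)$. Then $\fd_R M<\infty$, since $R_\p$ is flat over $R$ and $\mathbf x$ is regular. Moreover
\[
\Tor_i^R(M,N)\cong\Tor_i^{R_\p}(M,N_\p)\cong H_i(\mathbf x;N_\p),
\]
and this Koszul homology is nonzero at $i=d-\depth N_\p>0$. So $M\notin\A_N(R)$, contradicting $(2)$.

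The main obstacle is $(1)\Rightarrow(2)$ for non-finitely-generated $M$, namely justifying the derived depth formula in that generality. If the formula cannot be cited, I would try to prove the bound $\sup\{i:\Tor_i(M,N)\ne0\}\le\depth R-\depth N$ directly. The first attempt would be induction on $\fd M$ combined with the Koszul complex on a maximal $N$-regular sequence.
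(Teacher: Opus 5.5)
Your proposal is correct and follows essentially the same route as the paper: reducing $(2)$ to $\Tor$-vanishing by induction along a finite flat resolution, testing $(2)\Rightarrow(1)$ on $R_\p/\underline{x}R_\p$ via Koszul homology, and proving $(2)\Leftrightarrow(3)$ by duality with an injective cogenerator. The only difference is that the paper quotes a dependency formula, $\sup\{i\mid\Tor_i^R(M,N)\neq0\}=\sup_{\p}\{\depth R_\p-\depth_{R_\p}M_\p-\depth_{R_\p}N_\p\}$, for $M$ of finite flat dimension. You instead derive the needed bound from the derived depth formula by localizing at a minimal prime of $\Supp\Tor_s^R(M,N)$, which is the standard way that dependency formula is obtained, so the obstacle you flag is covered by a citation of the same kind the paper relies on.
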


\begin{proof}
For $(1)\Rightarrow(2)$, first observe that
\begin{equation}\label{deep-tor-vanishing}
\Tor_{>0}^R(M,N)=0\qquad\text{whenever }\fd_R(M)<\infty.
\end{equation}
Indeed, this can be checked locally. Over each localization at a maximal ideal, the dependency formula \cite[Theorem 3.5]{ciFlat} gives
\[
\sup\{i\mid\Tor_i^R(M,N)\neq0\}
=\sup_{\p\in\Supp M\cap\Supp N}
\{\depth R_{\p}-\depth_{R_{\p}}M_{\p}-\depth_{R_{\p}}N_{\p}\}.
\]
Here the displayed formula is applied over that local ring. Its hypotheses hold because $M$ has finite flat dimension, so the derived tensor product is bounded and $M$ has finite complete intersection flat dimension. Local deepness therefore makes the right-hand side at most zero, proving \eqref{deep-tor-vanishing}.

We now proceed by induction on $\fd_R(M)$. The flat case was established in \Cref{projectiveex}. Suppose $\fd_R(M)>0$ and consider an exact sequence
\[
0\longrightarrow L\longrightarrow F\longrightarrow M\longrightarrow0
\]
where $F$ is flat and $\fd_R(L)=\fd_R(M)-1$. By \eqref{deep-tor-vanishing}, tensoring with $N$ gives a short exact sequence
\[
0\longrightarrow L\otimes_R N\longrightarrow F\otimes_R N
\longrightarrow M\otimes_R N\longrightarrow0.
\]
The induction hypothesis gives $L,F\in\A_N(R)$. The long exact sequence in $\Ext_E(N,-)$ therefore shows that
$\Ext_E^{>0}(N,M\otimes_R N)=0$ and that applying $\Hom_E(N,-)$ to the displayed sequence is exact. Naturality gives a map from the original short exact sequence to this last one, whose components are $\epsilon_L,\epsilon_F,\epsilon_M$. Since the first two components are isomorphisms, so is $\epsilon_M$. Thus $M\in\A_N(R)$.

For $(2)\Rightarrow(1)$, choose $\p\in\Spec(R)$ and a maximal $R_{\p}$-regular sequence $\underline{x}$ in $\p R_{\p}$. Set $M=R_{\p}/\underline{x}R_{\p}$. The Koszul resolution of $M$ over $R_{\p}$ is a finite flat resolution over $R$, so $\fd_R M<\infty$. By assumption $M\in\A_N(R)$, and hence
\[
\Tor_{>0}^{R_{\p}}(R_{\p}/\underline{x}R_{\p},N_{\p})
\cong\Tor_{>0}^R(M,N)_{\p}=0.
\]
These groups are the positive Koszul homology of $\underline{x}$ on the finite module $N_{\p}$. Their vanishing implies that $\underline{x}$ is $N_{\p}$-regular, unless $N_{\p}=0$, in which case the desired depth inequality is automatic.

For $(2)\Leftrightarrow(3)$, choose an injective cogenerator $I$ and write $D=\Hom_R(-,I)$. The functor $D$ sends flat modules to injective modules and injective modules to flat modules. Exactness of $D$ consequently sends a finite injective resolution to a finite flat resolution, and a finite flat resolution to a finite injective resolution.

If $(2)$ holds and $\id_R(M)<\infty$, then $D(M)$ has finite flat dimension and belongs to $\A_N(R)$. The converse in \Cref{auslanderbassduality}(2) yields $M\in\B_N(R)$. The reverse implication follows in the same way from \Cref{auslanderbassduality}(1).
\end{proof}

\begin{cor}\label[cor]{twooutofthree-necessity}
Suppose $N$ is End-semidualizing. If either $\A_N(R)$ or $\B_N(R)$
has the two-out-of-three property, then $N$ is locally deep.
\end{cor}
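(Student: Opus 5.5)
By \Cref{pdmcm}, it suffices to show that the two-out-of-three property for either class forces condition $(2)$ or $(3)$ of that theorem. By \Cref{character-foxby-duality}, $\A_N(R)$ has the two-out-of-three property if and only if $\B_N(R)$ does. So we may assume throughout that $\A_N(R)$ has it, and we aim to show that every $R$-module of finite flat dimension lies in $\A_N(R)$.

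We argue by induction on $\fd_R(M)$, following the pattern of \Cref{cor:exactSeq} and \Cref{cor:auslanderFlatDim}. The only difference is that the two-out-of-three property is now assumed directly, rather than deduced from zero-detection.

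\begin{itemize}
\item The base case $\fd_R(M)=0$ is \Cref{projectiveex}, since $N$ is $\End$-semidualizing.
\item For $\fd_R(M)=n>0$, choose a short exact sequence $0\to L\to F\to M\to 0$ with $F$ flat (for instance free) and $\fd_R(L)=n-1$.
\item By induction $L\in\A_N(R)$, and $F\in\A_N(R)$ by \Cref{projectiveex}.
\item The two-out-of-three property then gives $M\in\A_N(R)$.
\end{itemize}

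Hence condition $(2)$ of \Cref{pdmcm} holds, and the implication $(2)\Rightarrow(1)$ shows that $N$ is locally deep.

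If instead one starts from $\B_N(R)$, one can avoid character duality and run the dual induction on $\id_R(M)$ directly. Use $0\to M\to I\to C\to 0$ with $I$ injective, which lies in $\B_N(R)$ by \Cref{injectiveex}, and $\id_R(C)=\id_R(M)-1$. This yields condition $(3)$ of \Cref{pdmcm}.

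There is no serious obstacle. The one point to check is that the class of flat-dimension-$\le n-1$ syzygies is reached with a flat or projective middle term, and this is standard.
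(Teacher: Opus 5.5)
Your proposal is correct and follows the paper's proof essentially verbatim: you reduce to $\A_N(R)$ via \Cref{character-foxby-duality}, induct on flat dimension with \Cref{projectiveex} as the base case, and conclude with $(2)\Rightarrow(1)$ of \Cref{pdmcm}. Your optional direct argument for $\B_N(R)$, by induction on injective dimension with \Cref{injectiveex} as the base case and then \Cref{pdmcm}$(3)$, is also valid and simply bypasses the duality step.
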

\begin{proof}
By \Cref{character-foxby-duality}, it suffices to assume that $\A_N(R)$ has the two-out-of-three property. The class $\A_N(R)$ contains every flat module
by \Cref{projectiveex}. Induction on flat dimension therefore shows that
every module of finite flat dimension is in $\A_N(R)$, and
\Cref{pdmcm} gives the conclusion.
\end{proof}

\begin{theorem}\label[theorem]{2outof3commutative}
Suppose $(R,\m)$ is a commutative Noetherian local ring and that $N$ is an $\End$-semidualizing module. Suppose additionally that one of the following holds:
\begin{enumerate}
\item[$(1)$] $\End_R(N)$ is local and $N$ is locally free on the punctured spectrum of $R$.
\item[$(2)$] $\End_{R_{\p}}(N_{\p})$ is local for every $\p\in\Spec(R)$.
\end{enumerate}
Then $\A_N(R)$ and $\B_N(R)$ have the two-out-of-three property.
\end{theorem}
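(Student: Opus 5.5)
The plan is to verify, under either hypothesis, that $-\otimes_EN$ detects zero objects. Once this is known, \Cref{auslander2outof3} and \Cref{prp:Bass2out3} give the two-out-of-three property for $\A_N(R)$ and $\B_N(R)$ directly. By \Cref{bothfunctorsfaithful}, it is equivalent to show the following: for every nonzero left $E$-module $X$ we have $\Hom_E(N,X)\neq0$.

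Two general observations will be used throughout. First, since $N$ is faithfully balanced, it is faithful over $R$, so $\Supp_R N=\Spec R$. Hence $N_\p\neq0$ for every prime $\p$. Second, $N$ is finitely presented over $E$ and $R$ is central in $E$. Therefore, as in \Cref{extunderflatbasechange} (whose proof only needs a finite free presentation of the first argument), for every prime $\p$ we have
\[
\Hom_E(N,X)_\p\cong\Hom_{E_\p}(N_\p,X_\p),
\]
with $E_\p\cong\End_{R_\p}(N_\p)$. So it suffices to find one prime $\p$ with $\Hom_{E_\p}(N_\p,X_\p)\neq0$.

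Let $X\neq0$ and regard it as an $R$-module. Since $R$ is Noetherian, $X$ has an associated prime $\p$, witnessed by some $x\in X$ with $\operatorname{ann}_R(x)=\p$. Consider the $E$-submodule $Ex\subseteq X$. Because $R$ is central in $E$, every element $ex$ is killed by $\p$. Moreover, $Ex$ is a quotient of $E$, so it is a finitely generated $R$-module annihilated by $\p$. Its localization $(Ex)_\p$ is therefore a finite-dimensional $k(\p)$-vector space, and it is nonzero because $x/1\neq0$ (as $\operatorname{ann}(x)=\p$). Thus $(Ex)_\p$ is a nonzero $E_\p$-submodule of $X_\p$ of finite length.

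Under hypothesis $(2)$, $E_\p$ is local and $N_\p$ is a nonzero finitely generated $E_\p$-module. \Cref{Homfinlength}, applied with $T=E_\p$, $U=N_\p$ and the module $X_\p$, gives $\Hom_{E_\p}(N_\p,X_\p)\neq0$, which finishes this case. Under hypothesis $(1)$ we split into two cases according to $\p$.
\begin{itemize}
\item If $\p=\m$, then $E_\m=E$ is local, and the same application of \Cref{Homfinlength} works.
\item If $\p\neq\m$, then $N_\p$ is a nonzero finitely generated free $R_\p$-module, hence a projective generator. By \Cref{morita-classes} and the Morita theory cited there, $\Hom_{E_\p}(N_\p,-)$ is an equivalence, so it is nonzero on $X_\p\neq0$.
\end{itemize}

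The main point requiring care is the localization step: that $\Hom_E(N,-)$ commutes with localization for non-finitely-generated $X$, and that $(Ex)_\p$ is really a nonzero finite-length $E_\p$-module. Both follow from the finite presentation of $N$ over $E$ and the centrality of $R$ in $E$.
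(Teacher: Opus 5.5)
Your proof is correct, and it takes a different route from the paper's. The paper never shows that $-\otimes_E N$ detects zero objects in general. Instead it fixes $0\to A\to B\to C\to 0$ with $A,B\in\A_N(R)$, shows that the specific obstruction $T=\Tor_1^R(C,N)$ satisfies $\Hom_E(N,T)=0$, and then proves $T=0$. The Bass class is afterwards obtained from the Auslander class by character duality (\Cref{character-foxby-duality}).

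In case $(2)$ the paper's argument for $T$ is essentially yours: localize a nonzero cyclic $E$-submodule at a minimal prime of its support and apply \Cref{Homfinlength} over the local ring $E_\p$. Nothing in it is specific to $T$.

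In case $(1)$ the routes genuinely differ. The paper uses local freeness only to get $T_\p\cong\Tor_1^{R_\p}(C_\p,N_\p)=0$ for $\p\neq\m$. Hence $T$ is $\m$-torsion, and for $0\neq t\in T$ the module $Et$ is a nonzero finite-length $E$-submodule, so \Cref{Homfinlength} over the local ring $E$ finishes. You instead treat an arbitrary $E$-module $X$ and handle associated primes $\p\neq\m$ by Morita theory. All you really need there is that $N_\p\cong R_\p^r$, with $r\geq1$, is a generator over $E_\p\cong M_r(R_\p)$.

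Your route costs that extra Morita step but gives a stronger conclusion. Under either hypothesis $N$ is faithfully $\End$-semidualizing, so \Cref{auslander2outof3} and \Cref{prp:Bass2out3} give both classes at once, with no character duality, and results such as \Cref{foxbyrecognition}(2) become available. The paper's route is more economical, since it only has to kill the single module $T$.
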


\begin{proof}
Let $0\to A\to B\to C\to0$ be a short exact sequence with $A,B\in\A_N(R)$. By \Cref{auslanderresolving}, it suffices to show that $C\in\A_N(R)$. Applying $-\otimes_R N$ gives an exact sequence of left $E$-modules
\[
0\longrightarrow T\longrightarrow A\otimes_R N
\longrightarrow B\otimes_R N\longrightarrow C\otimes_R N\longrightarrow0,
\qquad T=\Tor_1^R(C,N),
\]
and $\Tor_{>1}^R(C,N)=0$. Applying $\Hom_E(N,-)$ to the beginning of this sequence and using the natural isomorphisms $\epsilon_A$ and $\epsilon_B$ shows that
\begin{equation}\label{obstruction-Hom-zero}
\Hom_E(N,T)=0.
\end{equation}

Under the assumptions of $(1)$, $\Tor^R_1(C,N)$ contains a finite length $R$-submodule, which is thus a finite length $E$-module, since a composition series over $E$ is a composition series over $R$. It then follows from Lemma \Cref{Homfinlength} that $T=0$.

Under $(2)$, if $T\neq0$, choose a nonzero cyclic $E$-submodule $X\subseteq T$ and a minimal prime $\p$ of its $R$-support. The module $X_{\p}$ is nonzero and has finite length over $R_{\p}$, hence also over $E_{\p}$. Since $E_{\p}\cong\End_{R_{\p}}(N_{\p})$ is local, \Cref{Homfinlength} gives
$\Hom_{E_{\p}}(N_{\p},T_{\p})\neq0$. This contradicts \eqref{obstruction-Hom-zero}, since \Cref{extunderflatbasechange} identifies that module with $\Hom_E(N,T)_{\p}$.

Thus in either case $T=0$. Tensoring the original short exact sequence with $N$ is now exact. The long exact $\Ext_E$ sequence and the naturality of the units give $\Ext_E^{>0}(N,C\otimes_R N)=0$ and show that $\epsilon_C$ is an isomorphism, as in the proof of \Cref{auslander2outof3}. Hence $C\in\A_N(R)$. The assertion for the Bass class follows from \Cref{character-foxby-duality}.
\end{proof}

\begin{remark}\label[remark]{henselian-caution}
A finite indecomposable module over a Henselian local ring has a local endomorphism ring; see \cite[Proposition 1.18]{Yoshino90}. Thus hypothesis $(1)$ of \Cref{2outof3commutative} holds whenever $R$ is Henselian and $N$ is indecomposable and locally free on the punctured spectrum. 
\end{remark}

\begin{cor}\label[cor]{enddualizingismcm}
Suppose $R$ is a commutative Cohen--Macaulay local ring and $N$ is an $\End$-semidualizing module. If either hypothesis of \Cref{2outof3commutative} holds, then $N$ is maximal Cohen--Macaulay. In particular, this is the case if $R$ is Henselian and $N$ is indecomposable and locally free on the punctured spectrum.
\end{cor}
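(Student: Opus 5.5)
The plan is to chain \Cref{2outof3commutative}, \Cref{twooutofthree-necessity}, and the fact that over a Cohen--Macaulay ring ``locally deep'' at the maximal ideal is exactly the maximal Cohen--Macaulay condition.

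\textbf{Step 1.} Under either hypothesis of \Cref{2outof3commutative}, that theorem shows that $\A_N(R)$ and $\B_N(R)$ have the two-out-of-three property. Since $N$ is End-semidualizing, \Cref{twooutofthree-necessity} then shows that $N$ is locally deep.

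\textbf{Step 2.} Specialize local deepness to $\p=\m$ to get
\[
\depth_R N\ \ge\ \depth R=\dim R .
\]
The equality holds because $R$ is Cohen--Macaulay. Next I must rule out $N=0$, so that the depth is finite. The zero module is not End-semidualizing, because $\iota_N\colon R\to\End_E(0)=0$ is not injective when $R\neq 0$. So $N\ne0$, and for a nonzero finitely generated module $\depth_R N\le\dim N\le\dim R$. Hence
\[
\depth_R N=\dim N=\dim R,
\]
which is the definition of $N$ being maximal Cohen--Macaulay. One could also note that localizing gives the same inequality at every prime, but this is not needed.

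\textbf{Step 3.} For the final sentence, invoke \Cref{henselian-caution}. If $R$ is Henselian and $N$ is indecomposable, then $\End_R(N)$ is local by \cite[Proposition 1.18]{Yoshino90}. Together with $N$ being locally free on the punctured spectrum, this is hypothesis $(1)$ of \Cref{2outof3commutative}, so Steps 1--2 apply.

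\textbf{Main obstacle.} The substantive work sits in the earlier results, so I expect no real difficulty here. The only points needing care are the nonvanishing of $N$ and the standard inequality $\depth N\le\dim N$, both handled in Step 2.
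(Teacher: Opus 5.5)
Your proof is correct and follows the paper's argument: the two-out-of-three property from \Cref{2outof3commutative} gives local deepness via \Cref{twooutofthree-necessity}, so $\depth_R N\geq\depth R=\dim R$, and the reverse inequality holds because $N$ is finitely generated and nonzero. Your explicit check that $N\neq 0$ (the zero module is not faithful since $R\neq 0$) and your use of \Cref{henselian-caution} for the final sentence spell out details the paper leaves implicit.
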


\begin{proof}
The theorem gives $\depth_R N\geq\depth R=\dim R$. As $N$ is finite and nonzero, $\depth_R N\leq\dim R$, so equality holds.
\end{proof}

\begin{example}\label[example]{notwooutofthree}
Suppose $(R,\m,k)$ is a commutative Cohen--Macaulay local ring of positive dimension and set $N=k\oplus R$. As noted in Example \ref{generatorex}, $N$ is $\End$-semidualizing. However, $N$ has depth zero and is not maximal Cohen--Macaulay. By \Cref{pdmcm}, $\A_N(R)$ does not contain every module of finite flat dimension, although it contains every flat module by \Cref{projectiveex}. Hence $\A_N(R)$ does not satisfy the two-out-of-three property. By \Cref{character-foxby-duality}, neither does $\B_N(R)$.
\end{example}

In fact the concrete obstruction in Example \ref{notwooutofthree} is given by the following.

\begin{example}
Let $(R,\m,k)$ be a commutative Cohen--Macaulay local ring of positive dimension, set $N=k\oplus R$, and let $x\in\m$ be a nonzerodivisor on $R$. Then
\[
\Hom_E(N,\Tor_1^R(R/xR,N))=0,
\qquad
\Tor_1^R(R/xR,N)\neq0.
\]
Thus in general the vanishing in \eqref{obstruction-Hom-zero} does not force the Tor module to vanish.
\end{example}

\begin{proof}
Applying $-\otimes_R N$ to $0\to R\xrightarrow{x}R\to R/xR\to0$ gives an exact sequence of left $E$-modules
\[
0\longrightarrow\Tor_1^R(R/xR,N)\longrightarrow N\xrightarrow{x}N.
\]
Applying $\Hom_E(N,-)$ gives
\[
0\longrightarrow\Hom_E(N,\Tor_1^R(R/xR,N))
\longrightarrow\End_E(N)\xrightarrow{x}\End_E(N).
\]
The homothety identifies the last map with multiplication by $x$ on $R$, which is injective. This proves the first assertion. On the other hand,
$\Tor_1^R(R/xR,N)=(0:_N x)\cong k$, proving the second. Equivalently, both copies of $R$ in the original short exact sequence belong to $\A_N(R)$, whereas $R/xR$ does not.
\end{proof}

We conclude this section by studying the behavior of End-semidualizing modules modulo regular sequences.

\begin{lem}\label[lem]{endregularsequence}
Suppose $N$ is an $R$-module with $\Ext^i_R(N,N)=0$ for $1\leq i\leq n$.
If $\underline{x}=x_1,\dots,x_n$ is an $R$-regular and $N$-regular sequence,
then reduction modulo $\underline{x}$ induces an isomorphism of
$R/\underline{x}R$-algebras
\[
 E/\underline{x}E\xrightarrow{\ \cong\ }
 \End_{R/\underline{x}R}(N/\underline{x}N).
\]
\end{lem}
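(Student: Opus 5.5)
We argue by induction on $n$, the case $n=0$ being trivial. Since $E=\End_R(N)=\Hom_R(N,N)$, the natural map $E/\underline{x}E\to\End_{R/\underline{x}R}(N/\underline{x}N)$ factors as
\[
E/\underline{x}E=\Hom_R(N,N)\otimes_R R/\underline{x}R\longrightarrow \Hom_R(N,N/\underline{x}N)\cong\Hom_{R/\underline{x}R}(N/\underline{x}N,N/\underline{x}N),
\]
where the last isomorphism holds because every $R$-linear map $N\to N/\underline{x}N$ factors uniquely through $N/\underline{x}N$. This composite is the reduction map, and it is a homomorphism of $R/\underline{x}R$-algebras, since reduction respects composition. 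So it suffices to show that $\Hom_R(N,N)/\underline{x}\Hom_R(N,N)\to\Hom_R(N,N/\underline{x}N)$ is an isomorphism.

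For one element $x=x_1$, apply $\Hom_R(N,-)$ to $0\to N\xrightarrow{x}N\to N/xN\to0$, which is exact because $x$ is $N$-regular. The long exact sequence gives
\[
0\to E\xrightarrow{x}E\to\Hom_R(N,N/xN)\to\Ext^1_R(N,N)\xrightarrow{x}\cdots,
\]
and $\Ext^1_R(N,N)=0$ yields $E/xE\cong\Hom_R(N,N/xN)$. The same sequence, together with $\Ext^i_R(N,N)=0$ for $1\le i\le n$, shows that $\Ext^i_R(N,N/xN)=0$ for $1\le i\le n-1$.

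To pass to the quotient ring $\bar R=R/xR$ with $\bar N=N/xN$, I would use the standard change of rings isomorphism $\Ext^i_R(N,L)\cong\Ext^i_{\bar R}(\bar N,L)$ for $\bar R$-modules $L$. This holds because $x$ is regular on both $R$ and $N$, so $\Tor^R_{>0}(\bar R,N)=0$, and reducing an $R$-free resolution of $N$ modulo $x$ gives an $\bar R$-free resolution of $\bar N$. Hence $\Ext^i_{\bar R}(\bar N,\bar N)=0$ for $1\le i\le n-1$. The sequence $x_2,\dots,x_n$ is regular on both $\bar R$ and $\bar N$, so the induction hypothesis gives
\[
\End_{\bar R}(\bar N)/(x_2,\dots,x_n)\cong\End_{R/\underline{x}R}(N/\underline{x}N).
\]
Combining this with $E/xE\cong\End_{\bar R}(\bar N)$ finishes the proof, once we check that the composite is the reduction map, which is clear.

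The main obstacle is the bookkeeping of the Ext vanishing range through the induction, in particular the change of rings step. That step uses only the regularity of $x$ on $R$ and on $N$, both of which are in the hypotheses.
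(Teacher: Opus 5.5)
Your proposal is correct and follows essentially the same route as the paper. Both arguments handle one element via the long exact sequence of $\Hom_R(N,-)$ applied to $0\to N\xrightarrow{x}N\to N/xN\to0$. Both then induct by reducing an $R$-free resolution of $N$ modulo $x_1$, which gives $\Ext^i_{\bar R}(\bar N,\bar N)\cong\Ext^i_R(N,\bar N)=0$ for $1\le i<n$.
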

\begin{proof}
We first treat one element $x=x_1$, writing $\overline R=R/xR$
and $\overline N=N/xN$. Applying $\Hom_R(N,-)$ to
$0\to N\xrightarrow{x}N\to\overline N\to0$ gives
\[
 0\longrightarrow E\xrightarrow{x}E\longrightarrow
 \Hom_R(N,\overline N)\longrightarrow\Ext_R^1(N,N)=0.
\]
Since $\Hom_R(N,\overline N)=\End_{\overline R}(\overline N)$, the asserted isomorphism follows
when $n=1$.

For induction, let $F$ be a free resolution of $N$ over $R$.
Since $x_1$ is regular on both $R$ and $N$, we have
$\Tor^R_i(R/x_1R,N)=0$ for every $i>0$. Thus $F/x_1F$ is a free
resolution of $\overline N=N/x_1N$ over $\overline R=R/x_1R$, and
\[
 \Ext^i_{\overline R}(\overline N,\overline N)
 \cong\Ext^i_R(N,\overline N).
\]
The long exact sequence obtained from $0\to N\xrightarrow{x_1}N\to\overline N\to0$ gives $\Ext^i_R(N,\overline N)=0$ for $1\leq i<n$.
The sequence $x_2,\dots,x_n$ is regular on $\overline R$ and
$\overline N$, so induction completes the proof.
\end{proof}

\begin{proposition}\label[proposition]{endsemidualizingregularsequence}
Let $N$ be an $\End$-semidualizing module. If
$\Ext_R^i(N,N)=0$ for $1\leq i\leq n$ and
$\underline{x}=x_1,\ldots,x_n$ is an $R$-regular and $N$-regular
sequence, then $N/\underline{x}N$ is an $\End$-semidualizing module
over $R/\underline{x}R$.
\end{proposition}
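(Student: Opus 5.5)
The plan is to induct on $n$, exactly as in the proof of \Cref{endregularsequence}, so the heart of the matter is the case of a single element $x=x_1$. Write $\overline R=R/xR$, $\overline N=N/xN$ and $\overline E=E/xE$. By \Cref{endregularsequence} (the case $n=1$ uses only $\Ext^1_R(N,N)=0$), the ring $\overline E$ is identified with $\End_{\overline R}(\overline N)$. Under this identification, the $\overline E$-action on $\overline N$ is the one induced from the $E$-action on $N$. Since we are in the commutative setting, the standing finiteness hypotheses are automatic over $\overline R$. It therefore suffices to check two things: $\Ext^{>0}_{\overline E}(\overline N,\overline N)=0$, and the homothety $\overline R\to\End_{\overline E}(\overline N)$ is an isomorphism.

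First, note that $x$ is central in $E$. It is also a nonzerodivisor on $E$: if $xf=0$ then $f(N)\subseteq(0:_Nx)=0$. Likewise $x$ is regular on $N$ viewed as an $E$-module. Hence $\Tor^E_{>0}(E/xE,N)=0$. Consequently, if $P\to N$ is a resolution of ${}_EN$ by finitely generated free $E$-modules, then $P/xP$ is a free resolution of $\overline N$ over $\overline E$. This gives
\[
\Ext^i_{\overline E}(\overline N,\overline N)\cong\Ext^i_E(N,\overline N)\qquad(i\geq0).
\]
Next, apply $\Hom_E(N,-)$ to $0\to N\xrightarrow{x}N\to\overline N\to0$. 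Because $\Ext_E^{>0}(N,N)=0$, the long exact sequence yields $\Ext^i_E(N,\overline N)=0$ for all $i\geq1$. It also yields the short exact sequence
\[
0\to\End_E(N)\xrightarrow{x}\End_E(N)\to\Hom_E(N,\overline N)\to0 .
\]
Since $\End_E(N)\cong R$ via $\iota_N$, this identifies $\End_{\overline E}(\overline N)=\Hom_E(N,\overline N)$ with $R/xR$. Finally, one should check that this identification is compatible with the homothety: the image of $r$ is the map $n\mapsto r\bar n$. This follows from naturality of the reduction map $\End_E(N)\to\Hom_E(N,\overline N)$, $s\mapsto\pi\circ s$. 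Thus $\overline N$ is $\End$-semidualizing over $\overline R$.

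For the induction step, the proof of \Cref{endregularsequence} shows that $\Ext^i_{\overline R}(\overline N,\overline N)=0$ for $1\leq i\leq n-1$. Moreover, $x_2,\dots,x_n$ is regular on both $\overline R$ and $\overline N$. Applying the one-element case together with the induction hypothesis then finishes the proof, since $(N/x_1N)/(x_2,\dots,x_n)=N/\underline xN$.

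The main point requiring care is the identification of $\overline E$ with $\End_{\overline R}(\overline N)$, which is where $\Ext^1_R(N,N)=0$ is used. Without it, $\End_{\overline R}(\overline N)$ could be strictly larger than $E/xE$, and the computation over $\overline E$ would not be the right one. \Cref{endregularsequence} handles exactly this step.
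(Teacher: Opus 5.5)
Your proposal is correct and follows essentially the same route as the paper. You reduce to a single element via the Ext-range bookkeeping in the proof of \Cref{endregularsequence}, identify $\overline E$ with $\End_{\overline R}(\overline N)$, use that $x$ is a central nonzerodivisor on $E$ and regular on $N$ to get $\Tor^E_{>0}(\overline E,N)=0$ and hence $\Ext^i_{\overline E}(\overline N,\overline N)\cong\Ext^i_E(N,\overline N)$, and then read off the vanishing and the homothety isomorphism $\End_{\overline E}(\overline N)\cong R/xR$ from the long exact sequence for $0\to N\xrightarrow{x}N\to\overline N\to0$.
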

\begin{proof}
The proof of Lemma \ref{endregularsequence} shows that the required
range of self-$\Ext$ vanishing decreases by one after quotienting
by each element. It therefore suffices to prove the assertion for
a single element $x$. Set $\overline R=R/xR$, $\overline N=N/xN$,
and $\overline E=E/xE$. By Lemma \ref{endregularsequence},
$\overline E=\End_{\overline R}(\overline N)$.

The image of $x$ is central in $E$ and is a nonzerodivisor on $E$:
if $xf=0$ for $f\in E$, then $xf(n)=0$ for every $n\in N$,
and the $N$-regularity of $x$ gives $f=0$. Consequently
$0\to E\xrightarrow{x}E\to\overline E\to0$ is a free right
$E$-resolution of $\overline E$. Since $x$ is also regular on
$N$, it follows that $\Tor^E_i(\overline E,N)=0$ for $i>0$.
Reducing a free left $E$-resolution of $N$ modulo $x$ therefore
gives a free $\overline E$-resolution of $\overline N$ and yields
\[
 \Ext^i_{\overline E}(\overline N,\overline N)
 \cong\Ext^i_E(N,\overline N)\qquad(i\geq0).
\]
Applying $\Hom_E(N,-)$ to
$0\to N\xrightarrow{x}N\to\overline N\to0$ and using
$\Ext_E^{>0}(N,N)=0$ now gives
\[
 \Ext^{>0}_{\overline E}(\overline N,\overline N)=0,
 \qquad
 \End_{\overline E}(\overline N)
 \cong\End_E(N)/x\End_E(N)\cong R/xR.
\]
These identifications send multiplication by $r+xR$ to the
endomorphism it induces on $\overline N$. Thus the last
isomorphism is the homothety map, proving that $\overline N$
is $\End$-semidualizing.
\end{proof}

\section*{Acknowledgements}

This material is based upon work supported by the National Science Foundation under Grant No. DMS-1928930 and by the Alfred P. Sloan Foundation under grant G-2021-16778, while the second author was in residence at the Simons Laufer Mathematical Sciences Institute (formerly MSRI) in Berkeley, California, during the Spring 2024 semester. The first author was partly supported by the Simons Foundation grant MPS-TSM-00007849.

\section*{AI Use Disclosure}

Various AI models were used to check for inconsistencies and to improve the exposition. AI models are also responsible for providing us with Examples \ref{neithergeneratornorcogenerator} and \ref{artintwooutofthreefails}. All other content is due to the authors.

\end{document}